\theoremstyle{plain}
\newtheorem{theorem}{Theorem}
\newtheorem{proposition}[theorem]{Proposition}
\newtheorem{observation}[theorem]{Observation}
\newtheorem{conjecture}[theorem]{Conjecture}
\theoremstyle{definition}
\newtheorem{definition}[theorem]{Definition}
\theoremstyle{remark}
\def\internallinenumbers{} 
\makeatletter\pretocmd{\section}{\addtocontents{toc}{\vspace{-2.3mm}}}{}{}\makeatother
\definecolor{darkgreen}{rgb}{0,0.4,0}
\definecolor{BrickRed}{rgb}{0.65,0.08,0}
\newcommand{\U}{\mathsf{U}}
\newcommand{\D}{\mathsf{D}}
\newcommand{\xs}{\mathsf{S}}
\newcommand{\xl}{\mathsf{L}}
\newcommand{\xa}{\mathsf{a}}
\newcommand{\xd}{\mathsf{d}}
\renewcommand{\P}{\mathcal{P}}
\newcommand{\im}{\mathrm{Im}(T)}
\newcommand{\imt}[1]{\mathrm{Im}(T^{#1})}
\newcommand{\imtm}{\imt{m}}
\newcommand{\sh}{\mathsf{shadow}}
\newcommand{\cost}{\mathsf{cost}}
\newcommand{\di}{\mathsf{d}}
\newcommand{\x}{\mathcal{SL}} 
\newcommand{\LI}{\mathcal{LI}}
\newcommand{\id}{\mathsf{id}}
\renewcommand{\th}{\mathsf{thin}}
\newcommand{\li}{\preccurlyeq}
\DeclarePairedDelimiter\abs{\lvert}{\rvert}
\newcommand{\oeis}[1]{\text{\href{https://oeis.org/#1}{{\small \tt #1}}}} 
\newcommand{\Eulerian}[2]{\genfrac{<}{>}{0pt}{}{#1}{#2}}
\begin{document}

\author{Andrei Asinowski\affiliationmark{1}
 \and Cyril Banderier\affiliationmark{2}
 \and Benjamin Hackl\affiliationmark{1}
 }
\title[Flip-sort and combinatorial aspects of pop-stack sorting]{Flip-sort and combinatorial aspects of pop-stack sorting}
\affiliation{University of Klagenfurt, Klagenfurt, Austria\\
 University of Sorbonne Paris Nord, Villetaneuse, France
 }

\keywords{permutation, stack sorting, generating function, finite automaton, generating tree, lattice path}

\maketitle

\begin{abstract} \vspace{1mm}
Flip-sort is a natural sorting procedure which raises fascinating combinatorial questions.
It finds its roots in the seminal work of Knuth on stack-based sorting algorithms
and leads to many links with permutation patterns.
We present several structural, enumerative, and algorithmic results on
permutations that need few (resp.~many) iterations of this procedure to be sorted.
In particular, we give the shape of the permutations after one iteration,
and characterize several families of permutations related to the best and worst cases of flip-sort.
En passant, we also give some links between pop-stack sorting, automata, and lattice paths,
and introduce several tactics of bijective proofs which have their own interest.

\vspace{-1mm}

\begin{center}
\SixFlowerAltPetal\SixFlowerAltPetal\SixFlowerAltPetal\ We dedicate this article to Don Knuth, for his 1111000th birthday\dots up to a permutation! \SixFlowerAltPetal\SixFlowerAltPetal\SixFlowerAltPetal 
\end{center}
\end{abstract}



\renewcommand{\baselinestretch}{0.99}\normalsize
\renewcommand{\contentsname}{}
\tableofcontents
\renewcommand{\baselinestretch}{1.0}\normalsize
\raggedbottom
\clearpage

\section{Introduction and definitions}
\subsection{Flip-sort and pop-stack sorting}

Sorting algorithms are addressing one of the most fundamental tasks in computer science, 
and, accordingly, have been intensively studied. 
This is well illustrated by the third volume of Knuth's {\em Art of Computer Programming}~\cite{Knuth73}, which tackles many questions related 
to the worst/best/average case behaviour of sorting algorithms, 
and gives many examples of links between these questions and the combinatorial structures hidden behind permutations
(see also~\cite[Chapter 8.2]{Bona12} and~\cite{Sedgewick97}). 
For example, in~\cite[Sec.~2.2.1]{Knuth68}, 
Knuth considers the model 
of permutations sortable by one stack
(neatly illustrated by a sequence of $n$ 
wagons on a railway line, with a side track that can be used for permuting wagons)
and show that they are characterized as 231-avoiding permutations:
this result is a cornerstone of the field of permutation patterns.
Since that time, many results were also obtained e.g.~on permutations sortable with 2 stacks, 
which offer many nice links with the world of planar maps~\cite{west93, Dulucq98}.
The analysis of sorting with 3 stacks remains an open problem 
(see~\cite{Defant20} for some recent investiga\-tions). 
Numerous variants were considered (stacks in series, in parallel, etc.). 
Our article pursues this tradition
by dealing with the combinatorial aspects of the flip-sort algorithm,
a procedure for sorting permutations via a 
\textbf{pop-stack} (as considered in~\cite{AvisNewborn81,AtkinsonSack99}), which we now detail.

We use the following notation.
Permutations will be written in one-line notation,
$\pi = a_1 a_2 \ldots a_n$.
An \textit{ascent} (resp.~\textit{descent})
is a gap between two neighbouring positions,
$i$ and $i+1$, such that $a_i< a_{i+1}$
(resp.~$a_i> a_{i+1}$).
Descents of $\pi$ split it into \textit{runs}
(maximal ascending strings),
and ascents split it into \textit{falls}
(maximal descending strings).
For example, the permutation $3276145$ splits into runs
as $3|27|6|145$, and into falls as $32|761|4|5$.

Flip-sort consists of iterating flips on the input permutation,
where \textit{flip} is the transformation~$T$ that reverses \textit{all} the falls of a permutation.
For example, $T(3276145)=2316745$.
If one applies~$T$ repeatedly (thus obtaining $T(\pi), T^2(\pi), \ldots$),
then one eventually obtains the identity permutation $\id$.
For example: $3276145 \mapsto 2316745 \mapsto 2136475 \mapsto 1234657 \mapsto 1234567$;
see Figure~\ref{fig:ex00} for a visualization of this example by means of permutation diagrams.
Nota bene: if one does not impose the stack to contain increasing values, 
then the process can be related to the famous pancake sorting problem (see~\cite{GatesPapadimitriou79}).

\begin{figure}[h]
\centering\setlength{\belowcaptionskip}{-1mm}
\includegraphics[scale=1]{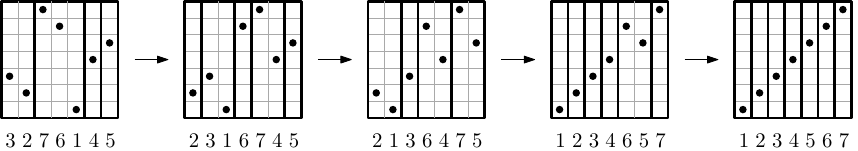} 
\internallinenumbers
\caption{The flip-sort algorithm iteratively flips all the falls until the permutation gets sorted.
In these drawings, the solid vertical bars separate the falls. Each iteration can also be seen as 
sending the input into a pop-stack: while reading the permutation from left to right, each decreasing value is added to the stack,
and \textit{the whole content} of the stack
is popped when an increasing value is met.}\label{fig:ex00}
\end{figure}
\vspace{-2mm}

\pagebreak

Let the \textit{cost} of $\pi$ be the (minimum) number of iterations needed to sort $\pi$,
that is, $\cost(\pi) = \min\{m\geq 0\colon \ T^m(\pi) = \id\}$.
To see that the cost is always finite,
observe that at each iteration the number of inversions
strictly decreases until $\id$ is reached.
It follows that, for any permutation of size $n$, we have $\cost(\pi) \leq \binom{n}{2}$. 
In fact, the cost is always lower than this naive upper bound: 
Ungar~\cite{Ungar82} proved\footnote{This result 
was conjectured by Goodman and Pollack~\cite{GoodmanPollack81}, and proved by Ungar,
in the context of \textit{allowable sequences of permutations} in links with the minimum number
of directions determined by $n$ non-collinear points in the plane; see the crisp presentation 
of this nice geometrical question in~\cite[Chapter 12]{AignerZiegler18}.} that one always has $\cost(\pi) \leq n-1$
(we refine this result in our Section~\ref{sec:width}).
Moreover, this bound is tight since 
for each $n$ there are permutations of size~$n$ with cost $n-1$
(we give some examples in Section~\ref{sec:skew}).

The pop-stack sorting leads to many interesting combinatorial questions.
Some of them can be visualized by means of the \textit{pop-stack-sorting tree}
whose nodes are all permutations of size $n$,
with directed arcs from $\pi$ to $T(\pi)$.
It is convenient to draw such a tree with levels corresponding to the distance to $\id$:
see Figure~\ref{fig:tree} that shows the tree for $n=5$
(the arcs are understood to be directed from left to right).
When, more generally, one considers this tree for any value of $n$, 
it is natural to ask for a characterization of the permutations that need few
iterations of $T$ to be sorted (i.e.~the right side of the tree), 
or many iterations (i.e.~the left side of the tree);
or the image of $T$ (i.e.~the non-leaves of the tree).
We address these questions in this article, 
thus extending previous works~\cite{Ungar82,PudwellSmith19,ClaessonGudhmundsson17}.

Another challenging problem is describing a typical shape of $T^m(\pi)$, 
as $m$ runs from $1$ to $n-1$, for random $\pi$. Computer simulations
show a very distinctive shape of the corresponding permutation diagrams:
they possess some empty areas and a cloud of points accumulating along two curves 
that tend to unite 
and ultimately converge to the identity permutation; see Figure~\ref{tab:ev1200}.
We prove a more rigorous formalization of some of these claims in this article.

\subsection{Summary of our results}\label{sec:summary}
The article is organized as follows.

In Section~\ref{sec:ImT}, we study the permutations that belong to $\im$, 
the \textbf{image} of the flip transformation. 
We find a structural characterization of such permutations,
and we prove that the generating function of these permutations is rational
when the number of runs is fixed. We then use a generating tree approach to design
some efficient algorithms enumerating such permutations. 
We also show that their generating function satisfies a curious functional equation,
and we give an asymptotic bound.

Section~\ref{sec:2ps} is dedicated to permutations
with \textbf{low cost}.
A permutation $\pi$ is \textit{$k$-pop-stack-sortable}
if $\cost(\pi) \leq k$, or, equivalently, $T^{k}(\pi)=\id$.
Avis and Newborn showed that $1$-pop-stack-sortable are
precisely the layered permutations~\cite{AvisNewborn81},
while 
$k$-pop-stack-sortable permutations 
are recognizable by an automaton (see
Claesson and Gu\dh mundsson~\cite{ClaessonGudhmundsson17}).
For $k=2$, Pudwell and Smith~\cite{PudwellSmith19} listed some conjectures
that we prove via a bijection between $2$-pop-stack-sortable permutations
to a family of lattice paths.

In contrast, in Section~\ref{sec:long}, we deal with permutations
with \textbf{high cost}, and with $\imtm$ for arbitrary $m$.
Our main result is a (tight) bound on \textbf{bandwidth} of $\tau \in \imtm$,
which provides a partial explanation of the phenomena that we observe 
in Figure~\ref{tab:ev1200}. Additionally, we find a full characterization of
$\imt{n-2}$, and some conditions for $\cost(\tau)=n-1$. We conclude with a conjecture
concerning the cost of skew-layered permutations.

\clearpage 

\begin{figure}
\centering
\includegraphics[scale=.7]{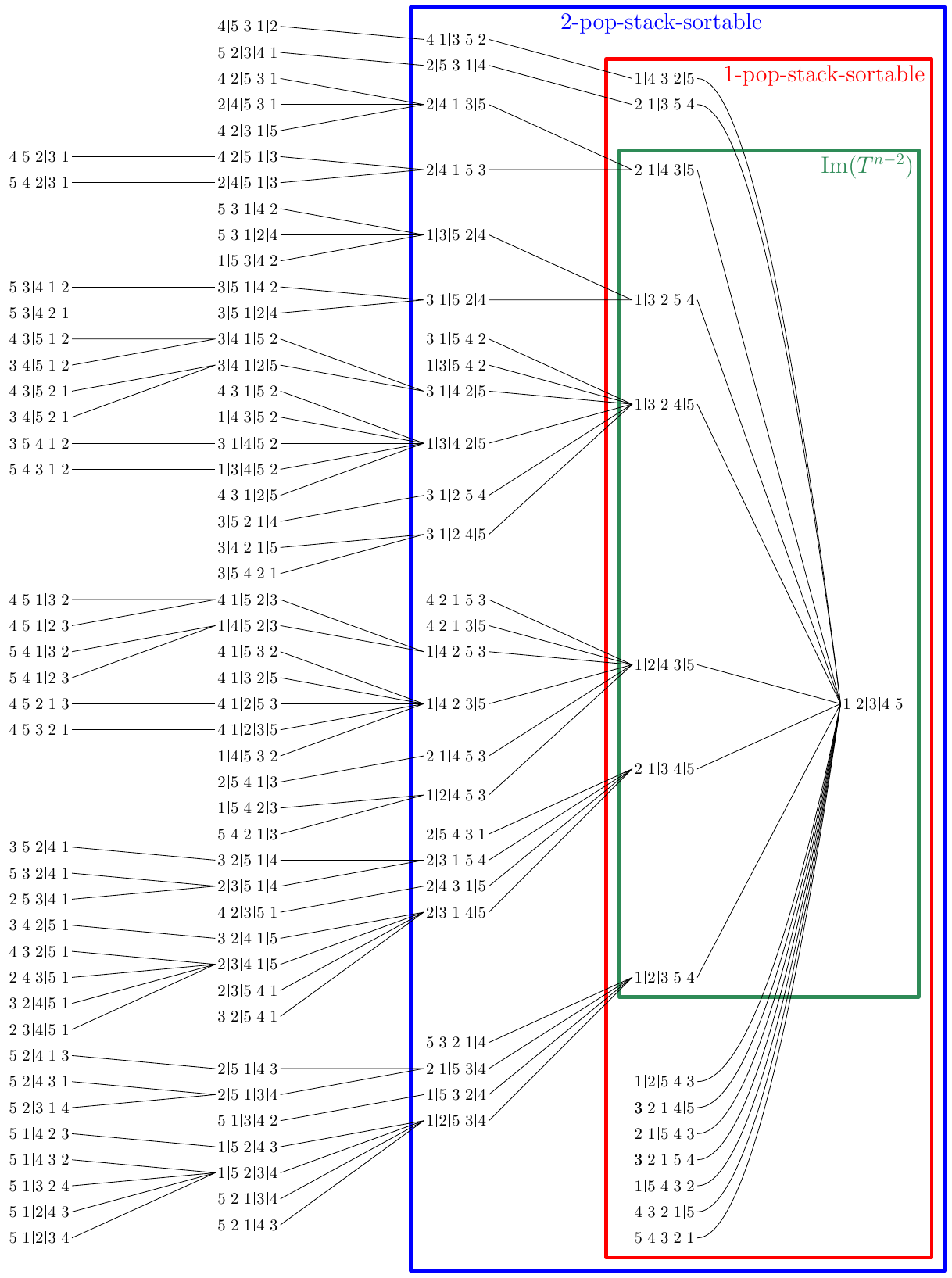}      
\internallinenumbers
\caption{The pop-stack-sorting tree for $n=5$. Flipping the falls in each permutation
leads to its successor until one reaches the identity permutation.
The leftmost column thus corresponds to the worst cases of this procedure.
In this figure, we also mark three sets considered in this article:
$1$- and $2$-pop-stack-sortable permutations, and $\imt{n-2}$. Additionally, $\imt{}$ consists of all internal nodes.}
\label{fig:tree}
\end{figure}

\clearpage

\begin{figure}
\begin{tabular}{@{\hspace{-2mm}}c@{\hspace{1mm}}c@{\hspace{1mm}}c@{}}
\includegraphics[scale=0.46]{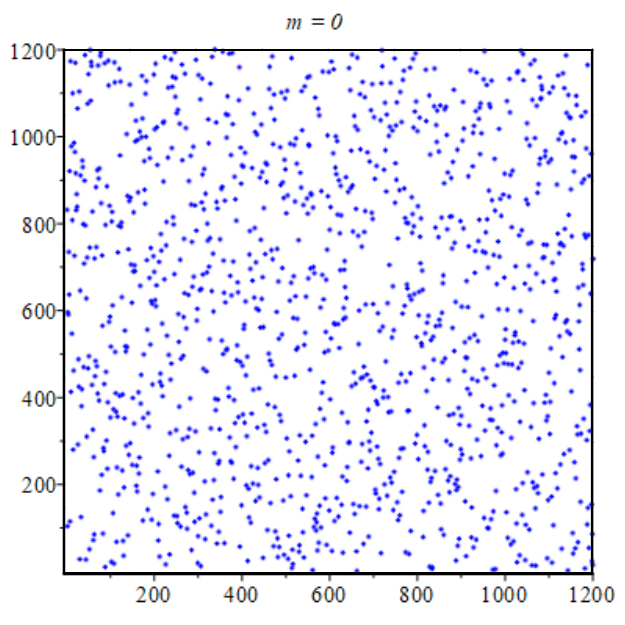} &
\includegraphics[scale=0.46]{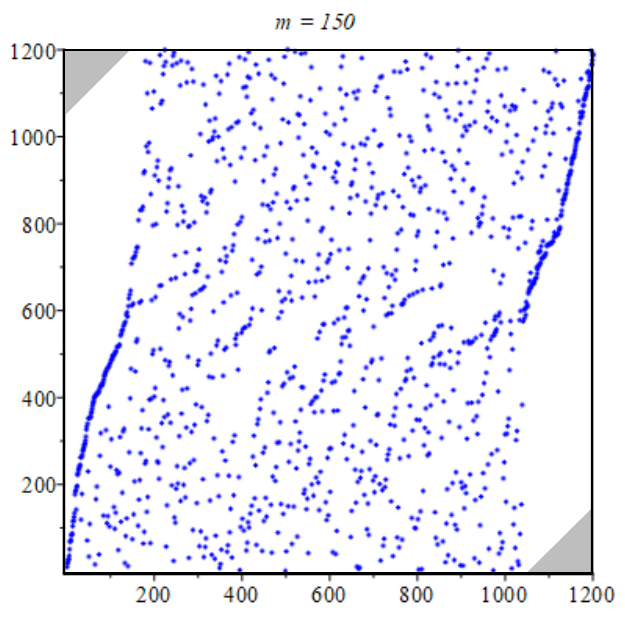} &
\includegraphics[scale=0.46]{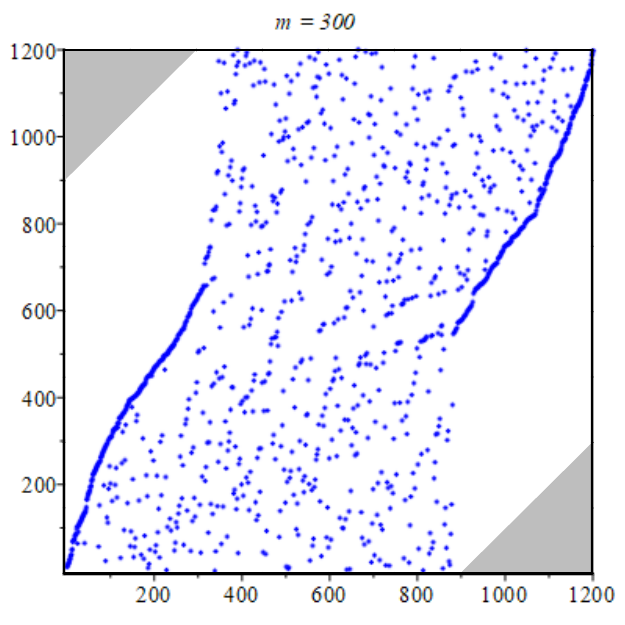} \\ 
\includegraphics[scale=0.46]{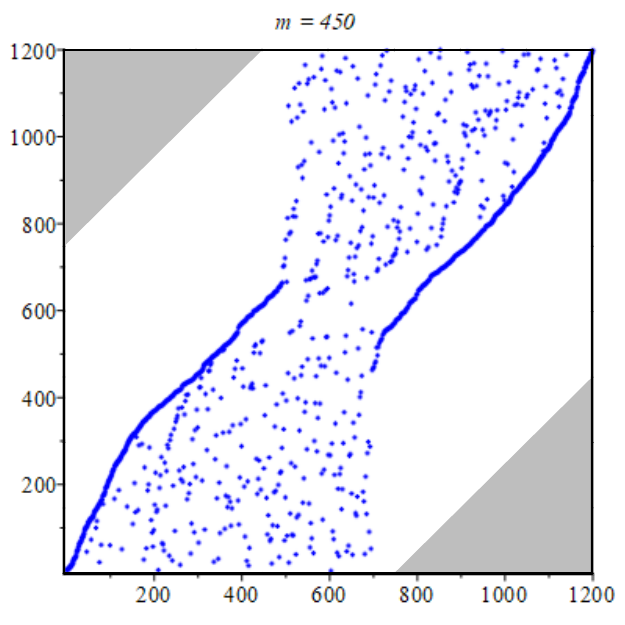} &
\includegraphics[scale=0.46]{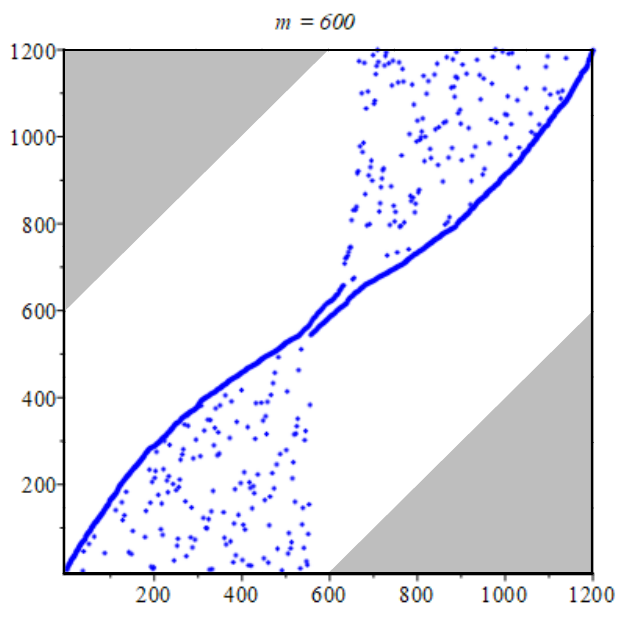} &
\includegraphics[scale=0.46]{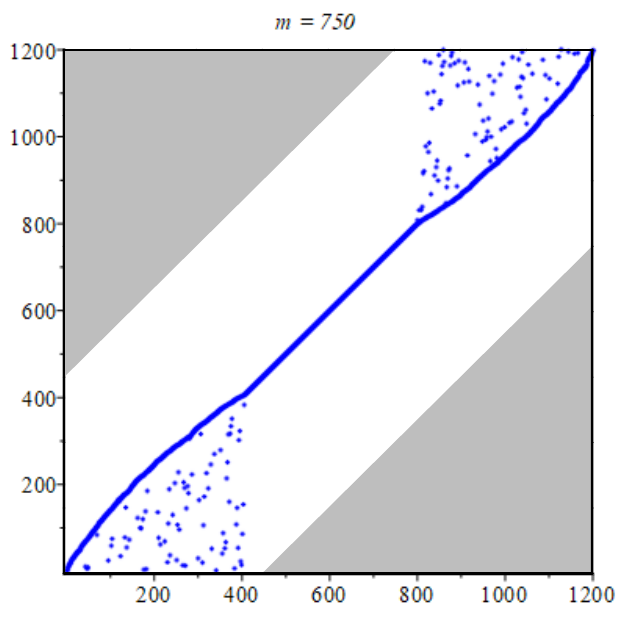} \\ 
\includegraphics[scale=0.46]{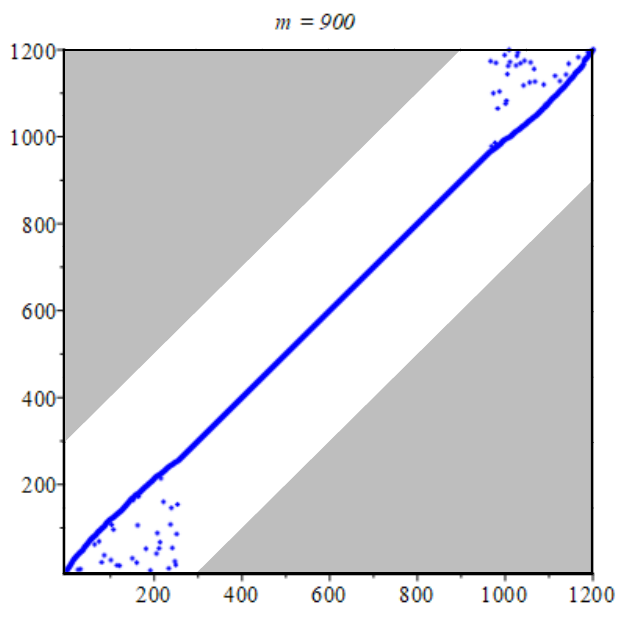} &
\includegraphics[scale=0.46]{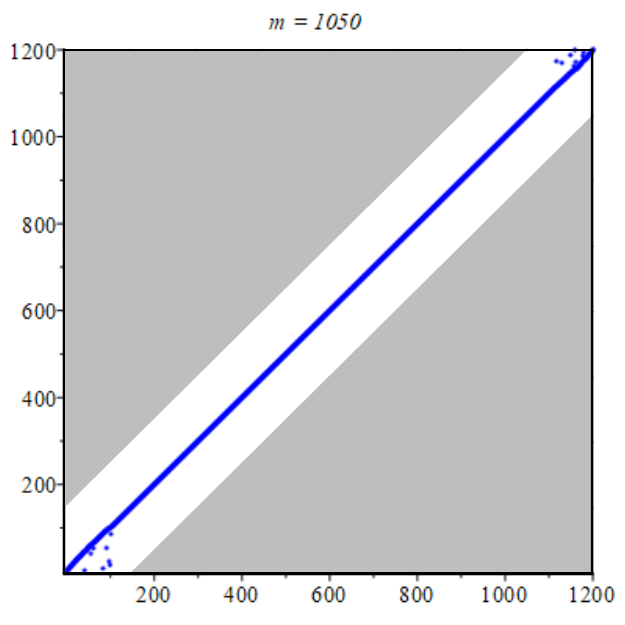} &
\includegraphics[scale=0.46]{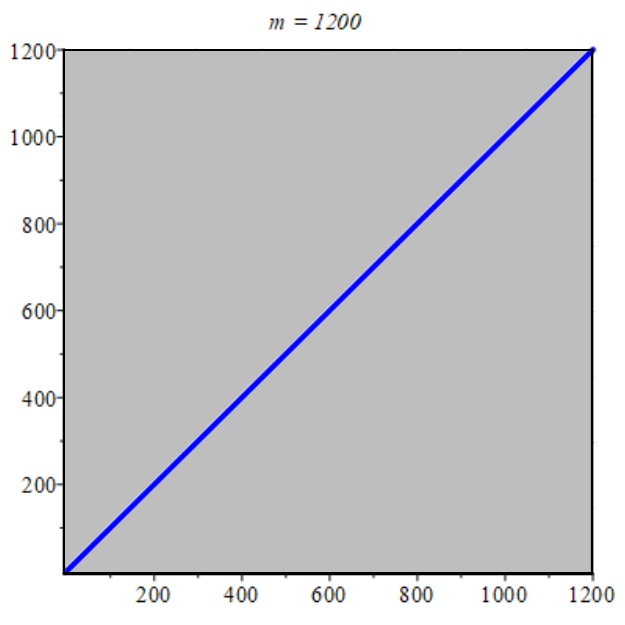} \\ 
\end{tabular}
\internallinenumbers
\caption{The evolution of the flip-sort algorithm on a sample permutation of size $n=1200$.
In this article, we show that for any input permutation
there are two areas (the grey areas in the above plots, which are proportional to the number $m$ of iterations) without any dots inside.
This entails that the permutations ``shrink'' (in the white area)
until they finally get fully sorted (after $\leq n-1$ iterations).
We also show that, for all $m$, for any coordinates in the white area, 
there exists an input permutation which will have a dot at these coordinates.
(See \href{https://lipn.fr/~cb/Papers/popstack.html}{lipn.fr/\string~cb/Papers/popstack.html} for some animations of this flip-sort process.)}
\label{tab:ev1200}
\end{figure}
\clearpage

\section{Results concerning one iteration of the flip-sort algorithm}\label{sec:ImT}

\subsection{Structural characterization of pop-stacked permutations}

We start this section with a characterization of the image of $T$, i.e.~the internal nodes in the pop-stack tree from Figure~\ref{fig:tree}.
\begin{definition}
A permutation $\tau$ is \textit{pop-stacked} if it belongs to $\imt{}$, that is,
if there is a permutation $\pi$ such that we have $\tau=T(\pi)$.
\end{definition}

\begin{definition}
A pair $(R_i$, $R_{i+1})$ of adjacent runs of~$\pi$ is overlapping if
$\mathrm{min}(R_i) < \mathrm{max}(R_{i+1})$. 
(Notice that $\mathrm{max}(R_i) > \mathrm{min}(R_{i+1})$
holds automatically since $R_i$ and $R_{i+1}$
are \textit{distinct} runs.)
\end{definition}

We begin our investigations of the image of $T$ by giving a
characterization of the permutations in $\imt{}$ in terms of
overlapping runs. In fact, in the following theorem we prove that a
permutation is pop-stacked if and only if all pairs of adjacent runs
are overlapping.
See Figure~\ref{fig:imf_ex} for a schematic drawing that represents the structure 
of permutations with overlapping adjacent runs, and an example.
Despite its very natural definition, this family of permutations
was, to the best of our knowledge, never studied before our initial conference
contributions~\cite{AsinowskiBanderierHackl19, AsinowskiBandierierBilleyHacklLinusson19}.

\begin{figure}[h]
\setlength{\abovecaptionskip}{0.6mm}
\setlength{\belowcaptionskip}{-2mm}
\centering
\includegraphics[scale=0.41]{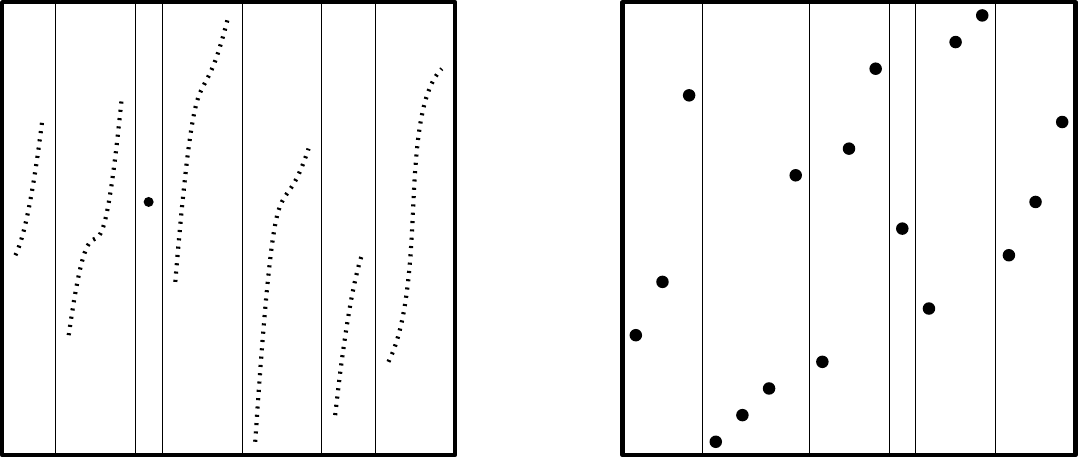}
\internallinenumbers
\caption{Pop-stacked permutations: a general schematic drawing of permutation with adjacent overlapping runs (a run can be of length $1$), 
and an example: the permutation $5 \ 7 \ 14 \, | \, 1 \ 2 \ 3 \ 11 \, | \, 4 \ 12 \ 15 \, | \, 9 \, | \, 6 \ 16 \ 17 \, | \, 8 \ 10 \ 13$.}
\label{fig:imf_ex}
\end{figure}

\begin{theorem}\label{thm:img_ch}
A permutation $\tau$ is pop-stacked if and only if 
all its pairs $(R_i$, $R_{i+1})$ of adjacent runs are overlapping.
\end{theorem}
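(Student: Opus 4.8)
The plan is to translate the statement about $\im$ into an explicit description of how $T$ acts, and then to read off the overlapping condition. First I would record the mechanics of the flip: if $\pi$ has falls $F_1, F_2, \ldots, F_k$ (its maximal descending strings), then by definition $T(\pi) = F_1^{\mathrm{rev}} F_2^{\mathrm{rev}} \cdots F_k^{\mathrm{rev}}$ is the concatenation of the reversed falls, so each block $B_j := F_j^{\mathrm{rev}}$ is an \emph{ascending} string. The crucial point is to decide which factorizations into ascending blocks actually arise from a flip. Since $F_j$ is descending, its last entry is $\min(F_j)$ and the first entry of $F_{j+1}$ is $\max(F_{j+1})$; hence the junction between $F_j$ and $F_{j+1}$ in $\pi$ is an ascent—which is exactly what makes them \emph{distinct} falls—precisely when $\min(F_j) < \max(F_{j+1})$. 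Writing $B_j = F_j^{\mathrm{rev}}$, so that $\min(B_j)=\min(F_j)$ and $\max(B_j)=\max(F_j)$, this reads $\min(B_j) < \max(B_{j+1})$. I therefore obtain the key reformulation: $\tau \in \im$ if and only if $\tau$ can be cut into contiguous ascending blocks $B_1, \ldots, B_k$ with $\min(B_j) < \max(B_{j+1})$ for every $j$.

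With this reformulation in hand, the ($\Leftarrow$) direction is immediate. If all adjacent runs of $\tau$ overlap, I simply take the run decomposition $R_1, \ldots, R_m$ of $\tau$ itself as the block decomposition: each run is ascending by definition, and the overlapping hypothesis $\min(R_i) < \max(R_{i+1})$ is exactly the required inequality. Reversing each run then produces a preimage $\pi = R_1^{\mathrm{rev}} \cdots R_m^{\mathrm{rev}}$ with $T(\pi) = \tau$.

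For the ($\Rightarrow$) direction I start from \emph{any} valid block decomposition $B_1, \ldots, B_k$ of $\tau$ supplied by the reformulation, and I must deduce the overlap condition for the (generally coarser) run decomposition. The main structural observation is that every descent of $\tau$—equivalently, every boundary between adjacent runs—must occur at a block boundary, since the blocks are ascending and so contain no internal descent. Consequently a run $R_i$ is a union of consecutive blocks ending at some $B_j$, and the next run $R_{i+1}$ begins with $B_{j+1}$. Now I exploit monotonicity within a run: since $R_i$ is ascending and $B_j$ is its last block, $\min(R_i) \le \min(B_j)$; since $R_{i+1}$ is ascending and $B_{j+1}$ is its first block, $\max(B_{j+1}) \le \max(R_{i+1})$. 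Chaining these with the block inequality $\min(B_j) < \max(B_{j+1})$ yields $\min(R_i) \le \min(B_j) < \max(B_{j+1}) \le \max(R_{i+1})$, i.e.\ $R_i$ and $R_{i+1}$ overlap.

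I expect the only genuine subtlety to be this last direction, and specifically the temptation to argue by reversing the runs of $\tau$ directly. A permutation in the image need not come from a flip whose falls are its runs—the falls of a preimage may be strictly finer than the runs of $\tau$—so the overlap condition must be shown to be \emph{forced}, not merely sufficient. The reformulation isolates exactly this issue, and the monotonicity step resolves it by transferring the single local ascent guaranteed at a block boundary to the minima and maxima of the whole runs on either side. Everything else is routine bookkeeping about the action of $T$.
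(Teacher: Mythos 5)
Your proof is correct and follows essentially the same route as the paper: the backward direction (reverse the runs of $\tau$) is identical, and your forward direction uses the same key facts — that the reversed falls of a preimage are ascending blocks refining the runs of $\tau$, and that the ascent at each fall boundary forces $\min(B_j)<\max(B_{j+1})$. The only difference is packaging: you argue directly via a clean reformulation of membership in $\mathrm{Im}(T)$ and a monotonicity chain, whereas the paper runs the same inequalities as a proof by contradiction at the boundary position between $R_i$ and $R_{i+1}$.
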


\begin{proof} \quad 
[\textit{First part of the proof:} pop-stacked $\Rightarrow$ overlapping runs.]
Let $\tau = b_1 b_2 \ldots b_n$ be a permutation with
$\mathrm{min}(R_i) > \mathrm{max}(R_{i+1})$ for some pair
$(R_i$, $R_{i+1})$ of its adjacent runs.
Let $b_{\alpha}$ be the last letter in $R_i$
and $b_{\alpha+1}$ be the first letter in $R_{i+1}$
(that is, $R_i = [\ldots, b_{\alpha}]$,
$R_{i+1} = [b_{\alpha+1}, \ldots]$). 

Assume for contradiction that we have $\tau=T(\pi)$ for some permutation $\pi=a_1 a_2 \ldots a_n$.
In~$\pi$, we have $a_{\alpha} < a_{\alpha+1}$ because otherwise
the string $[a_{\alpha} a_{\alpha+1}]$ is a part of a fall in~$\pi$,
and upon applying $T$ we have $b_{\alpha} < b_{\alpha+1}$, which is
impossible, because $b_{\alpha}$ and $b_{\alpha+1}$ lie in different runs of~$\tau$.
Therefore, if we consider the partition of $\pi$ into falls,
then $a_{\alpha}$ is the last letter of some fall~$F_j$,
and $a_{\alpha+1}$ is the first letter of the next fall $F_{j+1}$.
However, the values of $F_j$ are a subset of those of $R_i$,
and the values of $F_{j+1}$ are a subset of those of $R_{i+1}$.
Therefore we have $a_{\alpha} > a_{\alpha+1}$, which is a contradiction
to $a_{\alpha} < a_{\alpha+1}$ observed above.

\smallskip
[\textit{Second part of the proof:} overlapping runs $\Rightarrow$ pop-stacked.]
Consider a permutation $\tau$ with
$\mathrm{min}(R_i) < \mathrm{max}(R_{i+1})$ for all pairs
$(R_i$, $R_{i+1})$ of adjacent runs.
Let $\pi$ be the permutation obtained from $\tau$ by reversal of all its runs.
Then
the partition of $\pi$ into falls is the same as
the partition of $\tau$ into runs, and, therefore, $\pi$ is
a (not necessarily unique) pre-image of $\tau$. \qedhere
\end{proof}

\subsection{Pop-stacked permutations with a fixed number of runs}\label{sec:fixed-runs}
Sections~\ref{sec:fixed-runs}--\ref{sec:asy} are dedicated to enumerative aspects of pop-stacked permutations.
Let $p_n$ denote the number of pop-stacked permutations of size $n$.
The sequence $(p_n)_{n \geq 1}$ 
starts with 
1, 
1, 
3, 
11, 
49, 
263, 
1653, 
11877,
95991,
862047,
8516221,
91782159,
1071601285,
13473914281,
181517350571,
2608383775171, 
39824825088809, 
643813226048935;
we have added it as \oeis{A307030} to the On-Line Encyclopedia of Integer Sequences.
While it is hard to compute more terms directly, the introduction of
additional parameters provides further insights. 
Specifically, in this section we consider the number of runs in pop-stacked permutations.
In particular, we show that for each fixed~$k$, the generating function for
pop-stacked permutations of size~$n$ with exactly $k$ runs is rational.

\smallskip

Let $p_{n,k}$ denote the number of pop-stacked permutations
of size $n$ with exactly $k$ runs.
The case $k=1$ is trivial:
for any size,
the only permutation with only one run is the identity
permutation, and it is pop-stacked as it is e.g.~the image of itself.
Thus, we have $p_{n,1}=1$ for each $n \geq 1$.
Note that,  for $k>1$,  $p_{n,k}$ is always an even number (indeed, listing the runs from the last one to the first one
gives an involution without fixed points among pop-stacked permutations), therefore $p_n$ is always an odd number.

One key ingredient of our further results is the following encoding of permutations,
which we call {\em scanline mapping}.
Let $\pi$ be a permutation with $k$ runs. Let $r_i$ be the index of the run
in which the letter $i$ lies. Consider the word
$w(\pi)=r_{\pi^{-1}(1)} \, r_{\pi^{-1}(2)} \, \ldots \, r_{\pi^{-1}(n)}\in \{1,\dots,k\}^n$.
Visually, we scan the graph of $\pi$ from the bottom to the top, and
for each point that we encounter in this order, we
record to which run it belongs; see Figure~\ref{fig:perm_seq}.

\bgroup
\begin{figure}[h]
\setlength{\belowcaptionskip}{-.45mm}
\centering
\includegraphics[scale=0.8]{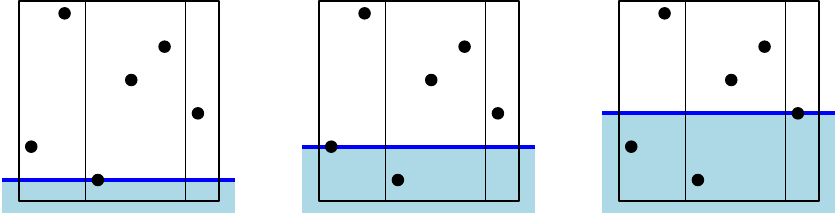}
\internallinenumbers
\caption{The {\em scanline mapping} allows us to encode permutations by words:
here, $\pi=261453$ is encoded by the word $w(\pi)=213221$ (the sequence of runs encountered if one reads the permutation bottom-to-top).}
\label{fig:perm_seq}
\end{figure}
\vspace{-2mm}
\egroup

\begin{proposition}\label{thm:seq} The scanline mapping has the following three properties.
\begin{enumerate} 
 \setlength{\itemsep}{2pt}
 \setlength{\parskip}{2pt}
\item The scanline mapping $\pi \mapsto w(\pi)$ is injective.
\item The scanline mapping $\pi \mapsto w(\pi)$ induces a bijection
between permutations of size $n$ with $k$ runs
\textnormal{and} the words in $\{1,\dots,k\}^n$ in which
there is an occurrence of $j+1$ before\footnote{N.B.: There should be some occurrence of $j+1$ \textit{somewhere} before some occurrence of $j$,
not necessarily \textit{just} before. For example, the sequence $42312$ with $k=4$
satisfies the condition. Similarly for the last item of this proposition.} an occurrence of $j$, for each $j$ ($1 \leq j \leq n-1$).
\item The scanline mapping $\pi \mapsto w(\pi)$ induces a bijection
between pop-stacked permutations of size $n$ with $k$ runs
\textnormal{and} the words in $\{1,\dots,k\}^n$ in which
there is an occurrence of $j+1$ before an occurrence of $j$,
and also an occurrence of $j$ before an occurrence of $j+1$, for each $j$ ($1 \leq j \leq n-1$).\qedhere 
\end{enumerate}
\end{proposition}\pagebreak
\begin{proof}
\begin{enumerate} 
\item The positions of $i$ ($1\leq i \leq k$) in $w(\pi)$
are the values in the $i$th run of $\pi$.
Thus, $\pi$ is reconstructed from $w(\pi)$ uniquely.
\item If (and only if) for some $j$, all the occurrences of $j$
in $w(\pi)$
are before the occurrences of $j+1$, then the corresponding positions of
$\pi$ do not form two distinct runs and thus we do not get
a permutation with $k$ runs.
\item If (and only if) for some $j$, all the occurrences of $j+1$
in $w(\pi)$
are before the occurrences of $j$, then all the values in the
$j$th run of $\pi$ are larger than all the values in the
$(j+1)$st run, and thus these runs are not overlapping.
\end{enumerate}\vspace*{-1.5\baselineskip}
\end{proof}

Proposition~\ref{thm:seq} can be used to
obtain a formula for the case of two runs directly:

\begin{proposition}\label{thm:k=2}
For $n\geq 1$,
the number of pop-stacked permutations
of size $n$ with exactly two runs,
is $p_{n,2} = 2^n-2n$.
\end{proposition}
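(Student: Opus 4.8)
The plan is to use part (3) of Proposition~\ref{thm:seq}, which tells me that pop-stacked permutations of size $n$ with exactly $k=2$ runs are in bijection with words $w \in \{1,2\}^n$ in which there is an occurrence of $2$ before an occurrence of $1$ \emph{and} an occurrence of $1$ before an occurrence of $2$. So the whole task reduces to counting such binary words, which is a routine inclusion--exclusion.

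First I would reformulate the condition for $k=2$. The two constraints (for the single value $j=1$) say: some $2$ precedes some $1$, and some $1$ precedes some $2$. Equivalently, the word is \emph{not} of the form $1\cdots1\,2\cdots2$ (all $1$s before all $2$s) and \emph{not} of the form $2\cdots2\,1\cdots1$ (all $2$s before all $1$s). A cleaner way to see this: a word over $\{1,2\}$ fails the ``$2$ before $1$'' condition exactly when every $1$ comes before every $2$, i.e.\ the word is a block of $1$s followed by a block of $2$s; it fails the ``$1$ before $2$'' condition exactly when it is a block of $2$s followed by a block of $1$s.

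Next I would count by complementation. There are $2^n$ words in $\{1,2\}^n$ total. The words with all $1$s before all $2$s are $1^a 2^{n-a}$ for $a=0,1,\dots,n$, giving $n+1$ such words; symmetrically there are $n+1$ words with all $2$s before all $1$s. These two families overlap exactly in the constant words $1^n$ and $2^n$, so by inclusion--exclusion the number of ``bad'' words is $(n+1)+(n+1)-2 = 2n$. Subtracting from the total yields
\begin{equation}
p_{n,2} = 2^n - 2n,
\end{equation}
as claimed. I should briefly verify that the hypotheses of Proposition~\ref{thm:seq}(3) require $k=2$ runs to actually be present, which for these surviving words is automatic: any word that is neither monotone-increasing-blocks nor monotone-decreasing-blocks certainly contains both symbols and genuinely splits into two runs.

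I do not expect a real obstacle here, since the main structural work has already been done in Proposition~\ref{thm:seq}; the only thing to be careful about is the overlap count in the inclusion--exclusion (making sure the two constant words $1^n$, $2^n$ are subtracted exactly once), and confirming that the ``good'' words are precisely the complement of the union of the two monotone families rather than something more subtle. A quick sanity check against small cases ($p_{2,2}=2^2-4=0$, $p_{3,2}=8-6=2$) would confirm the formula.
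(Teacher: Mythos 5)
Your proof is correct and follows essentially the same route as the paper: both reduce to counting words in $\{1,2\}^n$ via Proposition~\ref{thm:seq}(3) and then count the $2n$ violating words by complementation. You merely spell out the inclusion--exclusion step that the paper leaves implicit.
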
\vspace{-2.5
mm}
\begin{proof} By Proposition~\ref{thm:seq}, $p_{n,2}$
is the number of words in $\{1, 2\}^n$
with an occurrence of $1$ before an occurrence of $2$,
and also an occurrence of $2$ before an occurrence of $1$.
There are $2n$ words that violate this condition
(including the ``all-1'' and the ``all-2'' words).
\end{proof}

It is pleasant to have a combinatorial explanation for the number of pop-stacked
permutations with two runs, and this game could be pursued (algorithmically)
for a fixed number of runs $k$, but 
to get a closed-form formula holding for any arbitrary number of runs is still open.
While the following theorem does not give an explicit formula for all of those
cases immediately, it proves that the counting sequence for pop-stacked
permutations with precisely $k$ runs is nice and simple from a structural
point of view.

\begin{theorem}\label{thm:rat}
Let $k \geq 1$ be fixed. 
Then $P_k(z):=\sum_{n\geq 1} p_{n,k}z^n$, the generating function for the number of
pop-stacked permutations with exactly $k$ runs,
is rational.
\end{theorem}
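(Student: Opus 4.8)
The plan is to use Proposition~\ref{thm:seq}(3), which reduces counting pop-stacked permutations of size $n$ with exactly $k$ runs to counting words in $\{1,\dots,k\}^n$ satisfying a set of combinatorial constraints: for each $j$ with $1 \leq j \leq k-1$, there must be an occurrence of $j+1$ before some occurrence of $j$, \emph{and} an occurrence of $j$ before some occurrence of $j+1$. Since $k$ is fixed, this is a language over a fixed finite alphabet defined by finitely many "ordering of first occurrences" conditions, and it is well known that the generating function (by length) of a regular language is rational. So the heart of the argument is to exhibit a finite automaton (or, equivalently, a finite-state description) recognizing exactly these words.

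First I would reformulate each constraint. For a fixed pair $(j, j+1)$, the condition "$j+1$ occurs before some $j$ and $j$ occurs before some $j+1$" fails precisely when all occurrences of one symbol precede all occurrences of the other — that is, when, restricted to the subalphabet $\{j, j+1\}$, the word looks like $j^a (j+1)^b$ or $(j+1)^a j^b$. The property we want is therefore that, for each adjacent pair, both symbols appear and they \emph{interleave} in the sense that neither entirely precedes the other. Crucially, whether this holds depends only on the order in which the two relevant symbols first appear relative to each other and on whether a "later switch back" occurs; this is finite-state information.

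Next I would build the automaton. The natural state is a small amount of bookkeeping for each adjacent pair $j$: a flag recording which of $j, j+1$ was seen first (or neither yet), together with a flag recording whether the required "out-of-order" occurrence has already been witnessed (i.e.\ whether we have already seen the later-first symbol's partner appearing after it). Since there are $k-1$ pairs and each pair carries a bounded amount of state, the product automaton has at most a constant (in $n$) number of states — roughly $O(c^k)$ for some constant $c$, but with $k$ fixed this is just a finite number. A word is accepted iff, at the end, every pair has witnessed both required orderings. This automaton is deterministic and finite, so the set of accepted words of each length $n$ is counted by $p_{n,k}$, and by the transfer-matrix method the ordinary generating function $P_k(z) = \sum_{n \geq 1} p_{n,k} z^n$ is a rational function of $z$.

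The main obstacle, such as it is, is purely in the clean encoding of the state space: one must verify that the "interleaving" condition for each pair is genuinely regular and that the flags I described suffice to track acceptance without reference to $n$. This is routine once phrased correctly — the key observation is that each forbidden pattern $j^a(j+1)^b$ (and its mirror) is a finite-state condition on the projection to a two-letter subalphabet, and a finite intersection of regular languages is regular. I do not expect to need any explicit computation of the $p_{n,k}$ here; the theorem asserts only rationality, which follows immediately from regularity via the transfer matrix. A brief remark could note that this automaton also yields, in principle, an effective procedure to compute $P_k(z)$ for any given $k$, consistent with the explicit formula $p_{n,2} = 2^n - 2n$ obtained in Proposition~\ref{thm:k=2}.
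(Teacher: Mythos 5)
Your proposal is correct and follows essentially the same route as the paper: reduce via Proposition~\ref{thm:seq}(3) to a language over $\{1,\dots,k\}$, build a deterministic finite automaton whose states record which letters have appeared and which of the pairwise ordering conditions have been witnessed, and invoke the standard fact that regular languages have rational length generating functions. Your per-pair flag bookkeeping is just a repackaging of the paper's states $(L,C)$, so there is no substantive difference.
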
 
\begin{proof}
It is well known that the generating function of words recognized by an automaton is rational
(see e.g.~\cite[Sec.~I.4.2]{FlajoletSedgewick09}).
We use Proposition~\ref{thm:seq} to construct a deterministic automaton
$\mathcal{A}_k$ over the alphabet $\{1,\dots,k\}$ that precisely recognizes the words $w(\pi)$
that correspond to the pop-stacked permutations.
The states of $\mathcal{A}_k$ are labelled by pairs $(L, C)$, where
\begin{itemize}
\item $L \subseteq \{1,\dots,k\}$ indicates the already visited letters, and
\item $C \subseteq \bigcup_{j=1}^{k-1} \{ (j, j+1), (j+1, j) \}$
indicates the already fulfilled conditions
``there is an occurrence of $j$ before an occurrence of $j+1$'' resp.\
``there is an occurrence of $j+1$ before an occurrence of~$j$'', such that
\item if $j, j+1 \in L$, then at least one of $(j, j+1)$ and $(j+1, j)$ belongs to~$C$.
\end{itemize}
It is then straightforward to see that $\mathcal{A}_k$ precisely recognizes those words in $\{1, 2, \ldots, k\}^n$
that correspond bijectively to the pop-stacked permutations of size $n$ with $k$ runs by Proposition~\ref{thm:seq}.
Figure~\ref{fig:3x} shows such an automaton for $k=3$.
\end{proof}

\bgroup
\begin{figure}[ht]
\centering
\includegraphics[width=1\linewidth]{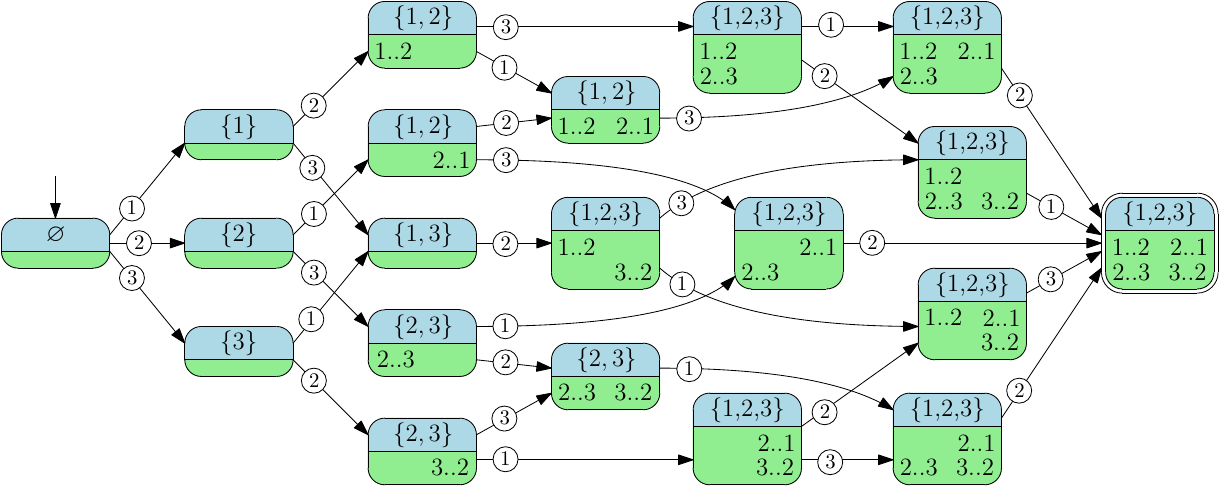}
\internallinenumbers
\caption{Automaton $\mathcal{A}_3$ that recognizes the words
encoding the pop-stacked permutations with $k=3$ runs.
In each state, the upper blue area indicates the letters from the
alphabet $\{1,\dots,k\}$ already encountered, 
and the lower green area contains $i..j$ (where $j=i\pm 1$) 
if and only if
there was already an occurrence of $i$ before an occurrence of $j$
in the word read by this point.
If a state is entered after reading the letter~$j$,
then reading $j$ again loops back to the same state:
such transitions are omitted in this~figure.}
\label{fig:3x}
\end{figure}
\egroup

\bigskip
In the next theorem, we address the complexity of $\mathcal{A}_k$:
its number of states grows roughly as $3.41^k$.
This exponential growth of the number of states also gives an insight on the complexity
of the generating functions associated to these automata.

\begin{theorem}\label{thm:states}
Denote by $a_k$ the number of states in the automaton $\mathcal{A}_k$.
The sequence $(a_k)_{k\geq 1}$ satisfies the linear recurrence $a_k = 4a_{k-1} - 2 a_{k-2}$.
Together with initial conditions, this implies that $(a_k)_{k \geq 1}$
is \oeis{A006012}: $(2, 6, 20, 68, 232, \dots)$.
Accordingly, the number of states of $\mathcal{A}_k$ grows exponentially: $a_k=\Theta( (2 + \sqrt{2})^k)$.
\end{theorem}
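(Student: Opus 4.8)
The plan is to count the states of $\mathcal{A}_k$ directly by understanding what a valid label $(L, C)$ can look like, and then to establish the recurrence $a_k = 4a_{k-1} - 2a_{k-2}$ either by a combinatorial decomposition or by finding a closed form that I can verify satisfies the recurrence. First I would observe that the set $C$ of fulfilled conditions is not free: by the third bullet in the definition of $\mathcal{A}_k$, whenever both $j$ and $j+1$ lie in $L$, at least one of the two directed conditions $(j,j+1)$, $(j+1,j)$ must already be in $C$. Conversely, a condition $(j,j+1)$ or $(j+1,j)$ can only have been fulfilled if both $j$ and $j+1$ have actually been read, i.e.\ belong to $L$ (you cannot record an order between two letters until you have seen both). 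So the genuine data in a state is, for each adjacent pair $(j,j+1)$, a ``status'' depending on which of $j$, $j+1$ are in $L$ and, when both are, which of the two orderings (or both) have been recorded.

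The key step is therefore to set up a transfer-matrix / product decomposition over the $k-1$ adjacent pairs (equivalently over the $k$ letters read left to right), tracking for each letter whether it is in $L$, and coupling adjacent letters through the constraint on $C$. Concretely, I would let the ``local state'' at position $j$ record whether $j\in L$, and between consecutive letters $j,j+1$ record the admissible value of the pair-condition: if at most one of $j,j+1$ is in $L$ there is a single possibility (no ordering yet recorded, and indeed none can be), while if both are in $L$ there are exactly three admissible recordings, namely $\{(j,j+1)\}$, $\{(j+1,j)\}$, or both. This gives a finite automaton on the index $j$ whose transfer matrix is small and constant, and summing all accepting paths yields $a_k$ as an entry of a matrix power. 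The characteristic polynomial of that transfer matrix then produces the linear recurrence $a_k = 4a_{k-1} - 2a_{k-2}$.

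Having the recurrence, the remaining steps are routine: I would verify the two initial values $a_1 = 2$ and $a_2 = 6$ by direct inspection of the automata (for $k=1$ the letter $1$ is either unread or read, giving the two states; for $k=2$ one enumerates the admissible $(L,C)$ pairs and finds six), confirm these match the start of \oeis{A006012}, and solve the recurrence. The characteristic equation $x^2 - 4x + 2 = 0$ has roots $2 \pm \sqrt{2}$, the dominant one being $2 + \sqrt{2} \approx 3.41$, which immediately gives $a_k = \Theta\bigl((2+\sqrt{2})^k\bigr)$ as claimed, and in particular the ``roughly $3.41^k$'' growth mentioned before the statement.

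The main obstacle I expect is the bookkeeping that justifies the transfer-matrix decomposition, specifically proving that the constraints genuinely factor over adjacent pairs with no hidden long-range dependency. One must argue that the condition set $C$ is determined independently pair-by-pair, i.e.\ that recording the order of $(j,j+1)$ imposes no constraint on the order of $(j+1,j+2)$ beyond the shared membership of $j+1$ in $L$; this is where a careful reading of Proposition~\ref{thm:seq} and the state definition is needed to rule out any coupling through the underlying word $w(\pi)$. Once locality is established, the recurrence and its asymptotics follow mechanically.
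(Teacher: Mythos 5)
Your proposal is correct, but it takes a genuinely different route from the paper. You count states via a transfer-matrix/product decomposition over the letters $1,\dots,k$: each state is determined by $L$ together with, for each adjacent pair $(j,j+1)$ with both letters in $L$, one of the three admissible nonempty subsets of $\{(j,j+1),(j+1,j)\}$, and one choice (the empty set) otherwise. This gives $a_k=\sum_{L\subseteq\{1,\dots,k\}}3^{\#\{j:\,j,j+1\in L\}}$, computed by the $2\times 2$ transfer matrix $\left(\begin{smallmatrix}3&1\\1&1\end{smallmatrix}\right)$ whose characteristic polynomial $\lambda^2-4\lambda+2$ yields the recurrence and the growth rate $2+\sqrt{2}$. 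The paper instead argues by induction on $k$, partitioning the states of $\mathcal{A}_k$ into three classes according to whether the visited letters contain $k$ and $k-1$: those without $k$ are the $a_{k-1}$ states of $\mathcal{A}_{k-1}$; those with $k$ but not $k-1$ biject onto the $a_{k-2}$ states of $\mathcal{A}_{k-2}$; and those with both $k$ and $k-1$ are in $(3{:}1)$ correspondence with the $a_{k-1}-a_{k-2}$ states of $\mathcal{A}_{k-1}$ containing $k-1$, giving $a_k=a_{k-1}+a_{k-2}+3(a_{k-1}-a_{k-2})$. The two arguments exploit the same locality, but yours buys an explicit closed sum/product formula for $a_k$ and makes the characteristic polynomial appear for structural reasons, while the paper's is a purely bijective induction that produces the recurrence coefficients directly. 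One point both proofs share and that you rightly flag: the state set must be read as containing only \emph{reachable} labels, i.e.\ $C$ may only record an order between $j$ and $j+1$ when both already lie in $L$; this is implicit in the paper's definition (and in its case~2, where adjoining $k$ to $L$ adds no new conditions) and is exactly what makes the constraint factor pair-by-pair with no long-range coupling, so your anticipated obstacle is indeed resolvable.
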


\begin{proof}
We proceed by induction on $k$ (starting at $k \geq 3$).
Denote by $Q_k$ the set of states of the automaton $\mathcal{A}_k$
defined in the proof of Theorem~\ref{thm:rat}.
Recall that the states are labelled by $(L, C)$, where $L$ is the set of already encountered letters,
and $C$ is the list of already fulfilled conditions of the kind $(j, j+1)$ or $(j+1, j)$.

We partition the states of $\mathcal{A}_k$ into three parts as follows.
\begin{itemize}
 \setlength\itemsep{0em}
\item[1.]
The states of $\mathcal{A}_k$ whose letters do not contain $k$: they are precisely all the states of $\mathcal{A}_{k-1}$.

\item[2.] The states of $\mathcal{A}_k$ whose letters contain $k$ but do not contain $k-1$. They
correspond bijectively to the states of $\mathcal{A}_{k-1}$ whose letters do not contain $k-1$,
and thus to all the states of $\mathcal{A}_{k-2}$:
\begin{equation}
(L, \, C) \in Q_{k-2}
\ \longleftrightarrow \
{(L, \, C) \in Q_{k-1}}
\ \longleftrightarrow \
(L\cup\{k\}, \, C) \in Q_k,
\end{equation}
where $k-1 \not\in L$.

\item[3.] Finally, the states of $\mathcal{A}_k$
whose letters contain both $k$ and $k-1$. They correspond $(3:1)$-bijectively to the states of
$\mathcal{A}_{k-1}$ whose letters contain $k-1$:
\begin{equation}
{(L, \, C) \in Q_{k-1}}
 \ \longleftrightarrow \
\left.
\begin{array}{l}
(L\cup\{k\}, \, C\cup\{(k-1, k)\}), \\
(L\cup\{k\}, \, C\cup\{(k, k-1)\}), \\
(L\cup\{k\}, \, C\cup\{(k-1, k), (k, k-1)\})
\end{array}
\right\}\in Q_k,
\end{equation}
where $k-1 \in L$.
\end{itemize}
Therefore, summing over these three cases, we have
\begin{equation}
a_k = a_{k-1} + a_{k-2} + 3(a_{k-1} - a_{k-2}),
\text{\qquad and thus } a_k=\frac{(2+\sqrt2)^k- (2-\sqrt2)^k}{\sqrt 2},\end{equation}
which completes the proof.
\end{proof}

{\bf Remark:}
Upon performing \textit{minimization} on $\mathcal{A}_{k}$,
we obtain automata with
the number of states given by $(b_k)_{k \geq 1}=(2, 6, 16, 40, 98, \dots)$.
This leads us to conjecture 
that this sequence satisfies the linear recurrence $b_k=3b_{k-1} -b_{k-2}-b_{k-3}$,
and that it is in fact \oeis{A293004}\footnote{The goddess of combinatorics is thumbing her nose at us, as 
this sequence is also related to permutation patterns in many ways: it counts permutations related to the elevator problem~\cite[5.4.8 Ex.8]{Knuth73}, permutations avoiding 2413, 3142, 2143, 3412, and some guillotine partitions~\cite{AsinowskiMansour10}!}.
Possibly, this could be proved by following the impact 
of each step of Brzozowski's algorithm for minimizing an automaton, see~\cite{Brzozowski63}.
It is interesting to notice that the exponential growth of the number of states
would then drop from $a_k = \Theta( (2 + \sqrt{2})^k) $ to
$b_k =\Theta( (1 + \sqrt{2})^k)$.

\medskip

Before we continue with our investigations towards efficient computation of the
numbers~$p_{n,k}$, we want to make some observations concerning the particular shape of the
rational generating functions $P_k(z)$. Recall that Theorem~\ref{thm:rat} provides
a construction for a deterministic finite automaton that recognizes pop-stacked
permutations of length $n$. Given such an automaton, the associated generating
function can be extracted in a straightforward way. Doing so for $1\leq k\leq 5$
yields the rational functions

\bgroup
\noindent\resizebox{.994\textwidth}{!}{
\noindent\begin{minipage}{1.011\textwidth}
\vspace{3mm}
\noindent$\begin{aligned}
P_1(z)&=\frac{z}{1-z}, \\
P_2(z)&=\frac{2z^3}{(1-z)^2(1-2z)}, \\
P_3(z)&=\frac{2z^4(1+3z-6z^2)}{(1-z)^3(1-2z)^2(1-3z)}, \\
P_4(z)&=\frac{2z^6(21-74z+5z^2+180z^3-144z^4)}{(1-z)^4(1-2z)^3(1-3z)^2(1-4z)}, \\
P_5(z)&= \frac{2z^{7} (21+ 198z - 3856z^2+ 18982z^3 - 40581z^4 + 33060z^5 + 12784z^6
- 37600z^7 +17280z^8)}{(1-z)^5(1-2z)^4(1-3z)^3(1-4z)^2(1-5z)}.
\end{aligned}$
\vspace{3mm}
\end{minipage}
}
\egroup

Some further functions can still be computed in reasonable time. However, as the
number of states in the automaton grows exponentially in $k$, this approach is not
feasible for large values~of~$k$.\linebreak 
\pagebreak

Note that the lowest degree in the numerator is $\lfloor 3k/2\rfloor$.
This is because the smallest possible permutation size for a pop-stacked permutation with $k$ runs can only be obtained by alternating
runs of lengths 1 and 2.
It is even possible to (experimentally) observe further structure in these~$P_k$,
for example they have the following partial fraction decomposition:

\begin{equation}\label{parfrac}
P_k(z)= -1 + \frac{1}{1-kz} + \sum_{j=1}^{k-1} \frac{N_{k,j}(z)}{(1-(k-j)z)^{j+1}},
\end{equation}
where $N_{k,j}$ is a polynomial of degree $j$ in $z$ and $2j$ in $k$:

\noindent\resizebox{.99\textwidth}{!}{
\noindent\begin{minipage}{1.15\textwidth}
\vspace{2mm}
\noindent$\begin{aligned}
 N_{k,1}(z)=&3 (k-1)-(k-1)(3k-4)z,\\
 N_{k,2}(z)=&-\frac{1}{2} \left( 3 k-4 \right) \left( 3k-11 \right) + \left( 9k^3-57 k^2 +102 k-44 \right) z+ \left( -\frac{9k^4}{2}+\frac {69 k^3}{2}-90 k^2+90 k-26 \right) z^2,\\
 N_{k,3}(z)=&\left(\frac{9k^3}{2}-{\frac {81 k^2}{2}}+102 k-70\right)+ \left( -\frac {27 k^4}{2}+153 k^3-\frac {1149 k^2}{2}+793 k-256\right) z+ \\ & +
\left( \frac {27 k^5}{2}-\frac {369k^4}{2}+\frac {1845 k^3}{2}-\frac {4067 k^2}{2}+1814k-406 \right) z^2\\
 &+ \left( -\frac{9k^6}{2}+72 k^5-450 k^4+1379 k^3-\frac {4223 k^2}{2}+1423k-244 \right) z^3,\\ 
 N_{k,4}(z)=& \left( -\frac {27 k^4}{8}+\frac {189k^3}{4}-\frac {1737k^2}{8}+\frac {1533k}{4}-214 \right)+ \left( \frac {27k^5}{2}-234k^4+\frac {2943k^3}{2}-4065k^2+4536k-1184 \right) z\\
&+ \left( -\frac {81k^6}{4}+\frac {837k^5}{2}-\frac {13383k^{4}}{4}+\frac {25887k^3}{2}-24395k^2+19081k-3418 \right) z^2\\
&+ \left( \frac {27k^7}{2} -324k^6+ \frac {6291k^5}{2} -15777k^4+43070k^3-61182k^2+37856k-4956 \right) z^3\\
&+ \left( -\frac {27k^8}{8}+\frac {369k^7}{4}-\frac {8433k^6}{8}+\frac {26061k^5}{4}-23493k^4+49493k^3 -57030k^2+29420k-2968\right) z^4.
\end{aligned}$
\vspace{2mm}
\end{minipage}
}

We listed the first few values of these polynomial $N_{k,j}$ in case that some clever mind could find the general pattern for any $j$.
We failed to find a generic closed-form formula, but it is noteworthy that the decomposition of Formula~\eqref{parfrac} has some similarities 
with the partial fraction decomposition of Eulerian numbers.

\begin{proposition}
Consider the Eulerian numbers $\Eulerian{n}{k}$, defined as the number of permutations
of size $n$ that have precisely $k$ runs\footnote{Sometimes, Eulerian numbers are defined
 such that $\Eulerian{n}{k}$ enumerates all permutations of size $n$ that have
 $k$ descents. Both cases (counting with respect to runs vs.\ counting
 with respect to descents) can be obtained from each other by shifting
 $k$ by one.}. 
The generating functions $E_k(z)$ for the columns of the Eulerian number
 triangle have the following partial fraction decomposition
 \begin{equation}\label{eq:Ek-partial-fraction}
 E_k(z) = \sum_{n\geq 0} \Eulerian{n}{k} z^n = \frac{1}{1 - kz} +
 \sum_{j=1}^{k-1} \frac{(-1)^j ((k-j) z)^{j-1}}{(1 - (k-j) z)^{j+1}}.
 \end{equation}
\end{proposition}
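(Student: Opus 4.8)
The plan is to derive the identity directly from the classical closed form for the Eulerian numbers rather than from their recurrence. Recall (Worpitzky's identity, adjusted to the runs convention via the shift $\text{runs}=\text{descents}+1$ mentioned in the footnote) that for $n\geq 1$ one has $\Eulerian{n}{k}=\sum_{i=0}^{k-1}(-1)^i\binom{n+1}{i}(k-i)^n$. I would substitute this expression into $E_k(z)=\sum_{n\geq 0}\Eulerian{n}{k}z^n$ and interchange the two summations, which is harmless since for each fixed $n$ the inner sum over $i$ has only $k$ terms; reorganizing by the value of $i$ gives $E_k(z)=\sum_{i=0}^{k-1}(-1)^i\sum_{n\geq 0}\binom{n+1}{i}\,\big((k-i)z\big)^n$. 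Everything then reduces to evaluating the inner geometric-type sums in closed form.

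The key computational step is the standard generating-function identity $\sum_{n\geq 0}\binom{n}{i}t^n=\dfrac{t^i}{(1-t)^{i+1}}$. Shifting the upper index, for $i\geq 1$ this yields $\sum_{n\geq 0}\binom{n+1}{i}t^n=\dfrac{1}{t}\sum_{m\geq 1}\binom{m}{i}t^m=\dfrac{t^{i-1}}{(1-t)^{i+1}}$, while the case $i=0$ is simply $\sum_{n\geq 0}t^n=\dfrac{1}{1-t}$. Setting $t=(k-i)z$ in the $i$th summand then turns it into its closed rational form. Collecting the results, the $i=0$ term contributes exactly $\dfrac{1}{1-kz}$, and each term with $1\leq i\leq k-1$ contributes $(-1)^i\dfrac{((k-i)z)^{i-1}}{(1-(k-i)z)^{i+1}}$; relabelling $i$ as $j$ gives precisely \eqref{eq:Ek-partial-fraction}.

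The part requiring the most care is the justification of the explicit formula for $\Eulerian{n}{k}$ together with the bookkeeping at the boundary $n=0$: one must fix a convention for the empty permutation (here the formula forces $\Eulerian{0}{1}=1$, consistent with $E_1(z)=1/(1-z)$) and check that the constant terms on both sides agree, which they do, since the $i=0$ summand and, for $k\geq 2$, the $i=1$ summand cancel at $z^0$. Once the Worpitzky expansion is taken as known, the remainder is a routine and entirely mechanical manipulation of rational functions, and in particular no new combinatorial input about runs is needed beyond that closed form. I therefore expect the only genuine obstacle to be presenting the explicit Eulerian formula cleanly (and perhaps recording its short proof via inclusion--exclusion on the descent set), rather than the generating-function computation itself.
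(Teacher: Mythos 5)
Your proposal is correct and follows essentially the same route as the paper: both start from the explicit alternating-sum formula $\Eulerian{n}{k}=\sum_{j=0}^{k-1}(-1)^j\binom{n+1}{j}(k-j)^n$, substitute it into $E_k(z)$, swap the order of summation, and evaluate the inner sum via the standard identity $\sum_{n\geq 0}\binom{n+j}{j}t^n=(1-t)^{-(j+1)}$. The only cosmetic difference is that the paper derives that explicit formula by induction from the recurrence $\Eulerian{n+1}{k}=(n+1-k)\Eulerian{n}{k-1}+k\Eulerian{n}{k}$, whereas you take it as a known classical fact; your boundary check at $z^0$ is consistent with the paper's conventions.
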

\pagebreak
\begin{proof} 
We were surprised to find no trace of this nice formula in the literature,
so we now give a short proof of it.
First, inserting $n+1$ into a permutation of size $n$ with $k$ runs either enlarges one of the runs 
if $n+1$ is inserted at its end, or it creates a new run in all other cases; this gives the classical recurrence 
\begin{equation}\label{eq:eulerian-recurrence}
 \Eulerian{n+1}{k} = (n+1-k) \Eulerian{n}{k-1} + k \Eulerian{n}{k} \text{\qquad for $n, k\geq 1$, with $\Eulerian{1}{1}=1$.}
\end{equation}
From it, it is easy to get by induction that 
\begin{equation*}
\Eulerian{n}{k} = \sum_{j=0}^{k-1} (-1)^j (k-j)^n \binom{n+1}{j}. 
\end{equation*}
We refer to~\cite{Petersen:2015:eulerian-numbers,FoataSchuetzenberger70} for possible alternative proofs and thorough surveys on Eulerian numbers.

Then, rewriting the Eulerian numbers in $E_k(z)$ by means of this formula
 and carrying out some algebraic manipulations yields
 \begin{align*}
 E_k(z) &=
 \sum_{n\geq 0} \bigg(\sum_{j=0}^{k-1} (-1)^j (k-j)^n \binom{n+1}{j}\bigg) z^n\\
 &= \frac{1}{1 - kz} + \sum_{j=1}^{k-1}\bigg((-1)^j \sum_{n\geq j-1} (k-j)^n
 \binom{n+1}{j} z^n\bigg)\\
 &= \frac{1}{1 - kz} + \sum_{j=1}^{k-1}\bigg((-1)^j ((k-j) z)^{j-1}
 \sum_{n\geq 0} \binom{n+j}{j} ((k-j) z)^n \bigg)\\
 &= \frac{1}{1 - kz} + \sum_{j=1}^{k-1}
 \frac{(-1)^j ((k-j)z)^{j-1}}{(1 - (k-j)z)^{j+1}}.
 \end{align*}
 This proves the partial fraction decomposition.\qedhere
\end{proof}

The partial fraction decomposition~\eqref{eq:Ek-partial-fraction} allows us
to make statements about the structure of~$E_k(z)$. 
We now list several of these properties, comparing them with our observations for the generating functions $P_k(z)$.
\begin{itemize}
\item The degree of the numerator of $E_k(z)$ is $\binom{k+1}{2} - 1$,
 and for $P_k(z)$ we conjecture it to be~$\binom{k+1}{2}$.
\item The leading coefficient in the numerator of $E_k(z)$ is
 $\pm\frac{1}{k} \prod_{m=1}^{k} m!$ (only the first summand in~\eqref{eq:Ek-partial-fraction}
 contributes towards the maximum degree),
 and for $P_k(z)$ we conjecture it to be $\pm\prod_{m=1}^{k} m!$.
\item For $E_k(z)$, summing the coefficients in the numerator yields
 $\pm\prod_{m=1}^{k-1} m!$ (setting $z=1$ in the numerator of $E_k(z)$ only
 collects contributions from the last summand in~\eqref{eq:Ek-partial-fraction}),
 for $P_k(z)$ we conjecture that it is $\pm 2\prod_{m=1}^{k-1} m!$.
\end{itemize}

While we think that it is unlikely that the sequence $p_{n,k}$ 
satisfies a simple linear recurrence relation as in~\eqref{eq:eulerian-recurrence},
we derive in the next section a useful, but more complex, recurrence scheme that depends on some additional
parameters.

\pagebreak

\subsection{A functional equation for pop-stacked permutations and the corresponding polynomial-time algorithm for the enumeration}
\label{sec:popstacked-polytime}
As noted above, the counting sequence $(p_n)_{n\geq 0}$ is hard to compute directly 
without introducing additional parameters. 
In~\cite{AsinowskiBanderierHackl19}, we mentioned that a generating tree approach leads 
to an efficient enumeration algorithm for pop-stacked permutations;
it relies on an additional parameter which is either the number of runs, or the final value of the permutation.
Such generating tree approaches lead to polynomial time algorithms for computing $p_n$
(see~\cite{BanderierBousquet-MelouDeniseFlajoletGardyGouyou-Beauchamps02} for further examples 
of enumerations via generating tree approaches).

When the additional parameter is the number of runs,
this gives a recurrence which encodes the addition of a new run of length $k$ to the end 
of a given permutation of size $n$ (and relabels this concatenation to get a permutation of size $n+k$).
The cost of this approach is analysed in~\cite{ClaessonGudhmundssonPantone19},
and was implemented with care on a computer cluster, allowing the computation 
of the number of pop-stacked permutations of size $n$, for all $n\leq 1000$.

When the additional parameter is the final value of the permutation,
this gives an approach that we now detail in this section. 
Based on a generation strategy where either one or two elements are added to a given permutation, we
consider the corresponding generating tree.
With this strategy, we can derive an appropriate
recurrence which also allows us to compute the sequence in polynomial time, and
 also offers a functional equation for the corresponding multivariate
generating function.

For this generation strategy, we need to keep track of a few
additional parameters in pop-stacked permutations. Let $n$, $k$, $a$, $b$, $c$
be non-negative integers such that $k\leq n$ and $1\leq a\leq b\leq c\leq n$.
Let the set $\P_{n, k; a, b, c}$ denote all pop-stacked permutations where
\begin{itemize} 
 \setlength\itemsep{-1mm}
\item $n$ denotes the length of the permutation,
\item $k$ denotes the number of runs,
\item $a$ and $c$ denote the smallest and largest element of the last run,
 respectively,
\item if $a < c$, then $b$ denotes the second-largest element of the last run; else, if $a = c$, then also $b := a = c$.
\end{itemize}
Just to give some examples, the permutation $26|1345$ is contained in
$\P_{6, 2; 1, 4, 5}$, $14|25|36$ is contained in $\P_{6, 3; 3, 3, 6}$,
and $12356|4 \in \P_{6, 2; 4, 4, 4}$.

\bigskip

The following set of rules describes how, starting from two initial
permutation sets\linebreak $\P_{1, 1; 1, 1, 1} = \{1\}$ and $\P_{2, 1; 1, 1, 2} = \{12\}$,
all other sets $\P_{n, k; a, b, c}$ can be generated. This heavily relies on
the characterization, given in Theorem~\ref{thm:img_ch}, of pop-stacked permutations.
\bgroup
\setlist[enumerate]{leftmargin=15mm}
\begin{enumerate}[(Rule 1)] 
\setlength\itemsep{0mm}
\item\label{itm:expansion:new1}
 A new run consisting of a single element is added to the end of all
 generated permutations. For all integers $i$ with $a+1\leq i\leq c$, this
 operation corresponds to an injection
 $\P_{n, k; a, b, c}\hookrightarrow \P_{n+1, k+1; i, i, i}$.
\item\label{itm:expansion:new2}
 A new run consisting of two elements is added to the end of all
 generated permutations. For all integers $i < j$ with $1\leq i\leq c$ and
 $a+2 \leq j\leq n+2$, this corresponds to an injection
 $\P_{n, k; a, b, c}\hookrightarrow \P_{n+2, k+1; i, i, j}$.
\item\label{itm:expansion:add}
 Insert a new second-largest element into the last run of all generated
 permutations. For all integers $i$ with $b+1\leq i\leq c$, this corresponds
 to the injection $\P_{n, k; a, b, c} \hookrightarrow \P_{n+1, k; a, i, c+1}$.
\end{enumerate}
\egroup
\begin{proposition}
The algorithm described above, starting with $\P_{1,1;1,1,1}$ and $\P_{2,1;1,1,2}$
and iteratively applying the expansion rules~\ref{itm:expansion:new1}, \ref{itm:expansion:new2}, and~\ref{itm:expansion:add} generates all pop-stacked permutations in
a unique way.
\end{proposition}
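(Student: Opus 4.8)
The plan is to establish three facts: \emph{soundness} (each rule sends a pop-stacked permutation to a pop-stacked permutation, with the five parameters updated exactly as claimed), \emph{completeness} (every pop-stacked permutation arises), and \emph{uniqueness} (it arises through exactly one sequence of rule applications). I would prove soundness first, and then handle completeness and uniqueness together by describing an inverse ``deconstruction'' procedure and inducting on the size~$n$.

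For soundness, the only adjacency whose overlap status can change is the one at the very end of the permutation, so by Theorem~\ref{thm:img_ch} the verification reduces to checking, after the implicit relabelling, that the overlap condition holds there. Rules~\ref{itm:expansion:new1} and~\ref{itm:expansion:new2} append a fresh run: here I would check that the stated parameter ranges ($a+1\le i\le c$; resp.\ $1\le i\le c$ and $a+2\le j\le n+2$) are precisely the values of $i$ (resp.\ $i,j$) for which the appended run overlaps the previous last run, and that a genuine descent separates the two runs so that no merging occurs. Rule~\ref{itm:expansion:add} enlarges the last run by a new second-largest element; since it only raises the maximum of the last run while leaving its minimum fixed, the overlap with the preceding run is preserved, the run stays ascending, and $k$ is unchanged. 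In each case the new last run's $(a,b,c)$ can be read off directly, confirming the target set.

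The crux is the inverse procedure, and the key observation is that the length of the last run of a pop-stacked permutation determines which rule was applied last: Rule~\ref{itm:expansion:new1} yields a last run of length $1$, Rule~\ref{itm:expansion:new2} one of length $2$, while Rule~\ref{itm:expansion:add} can only be applied when the old last run already has length $\ge 2$ (it requires $b<c$) and hence yields a last run of length $\ge 3$. Thus, given a pop-stacked $\tau$ of size $n\ge 3$, its last-run length forces a unique rule; I would then undo that rule (delete the singleton run, delete the two-element run, or delete the second-largest element of the last run) and relabel. Two points need checking: that the resulting smaller permutation $\sigma$ is again pop-stacked, and that the parameter reconstructing $\tau$ lies in the admissible range of the forced rule. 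Both follow from the overlap inequalities for $\tau$; for instance, a singleton last run cannot carry the value $n$ (otherwise overlap would force the preceding run to contain a value exceeding~$n$), which is exactly the restriction imposed by Rule~\ref{itm:expansion:new1}, and deleting one or two values preserves the relative order, hence the overlap status, of every remaining pair of adjacent runs. After verifying the base cases $n=1,2$ (where the only pop-stacked permutations are $1$ and $12$, matching the two seed sets), induction on $n$ gives that $\sigma$ is generated uniquely; since $\tau$ has a unique predecessor reached by a unique rule and a unique parameter choice, $\tau$ is generated exactly once.

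I expect the main obstacle to be the bookkeeping around relabelling: one must track how inserting or deleting one or two values shifts the minima and maxima of the affected runs, and confirm in every sub-case that the overlap inequalities of Theorem~\ref{thm:img_ch} are preserved and that the forward parameter ranges coincide exactly with the set of legal reverse moves. Making this correspondence a genuine bijection---neither omitting a pop-stacked permutation nor producing one through two different histories---is where the care is required, and the length-of-last-run trichotomy is what renders the rule unambiguous and drives the whole argument.
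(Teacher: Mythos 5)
Your proposal is correct and follows essentially the same route as the paper's proof: both verify that the rules preserve the overlap condition of Theorem~\ref{thm:img_ch}, and both establish completeness and uniqueness via the inverse operation determined by the length-of-last-run trichotomy (length $1$, length $2$, length $\geq 3$), inducting down to the seeds $1$ and $12$. Your version is somewhat more explicit about the parameter-range and relabelling bookkeeping, but the underlying argument is the same.
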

\begin{proof}
 In order to see that all permutations are generated by this strategy we
 study the corresponding inverse operation. Given a pop-stacked permutation $\sigma$ of length $n$
 that is different from $1$ or $12$,
 we consider its last run. Then we carry out the following operations, based on
 the length of the final run:
 \begin{itemize}
 \item If the last run is of length 1 or 2, then one deletes it. This reverses~\ref{itm:expansion:new1} and~\ref{itm:expansion:new2}.
 \item Otherwise, if the run is of length at least 3, the second-largest
 (and therefore penultimate) element is deleted. This is the
 reversal of~\ref{itm:expansion:add}.
 \end{itemize}
 After proper relabelling, this results in a shorter permutation~$\tilde\sigma$
 that is still pop-stacked. Applying the appropriate expansion rule to~$\tilde\sigma$
 constructs a set of ``successors'' also containing $\sigma$. Observe
 that by the nature of the expansion rules, only pop-stacked permutations
 can be generated (as long as we start with a pop-stacked permutation) as
 it is made sure that the last and the penultimate run always overlap.

 This proves that for every permutation (apart from $1$ and $12$) a unique
 predecessor in the generating tree can be found. The permutations $1$ and $12$
 are special in the sense that they are the only permutations for which the
 above strategy does not yield a well-defined result. At the same time, this
 implies that when starting at any other permutation $\sigma$ and iterating
 the procedure of finding the predecessor eventually leads to either $1$ or $12$.
 This proves that when starting with these permutations (in our notation, this
 corresponds to the sets $\P_{1,1;1,1,1}$ and $\P_{2,1;1,1,2}$), all other
 pop-stacked permutations can be generated by applying~\ref{itm:expansion:new1}, \ref{itm:expansion:new2},
 and~\ref{itm:expansion:add} iteratively.
\end{proof}

Now, let us turn our focus from generating these permutations to their
enumeration. Let $p_{n, k; a, b, c} := \abs{\P_{n, k; a, b, c}}$, the number of
pop-stacked permutations in $\P_{n, k; a, b, c}$. With the notation from
Section~\ref{sec:fixed-runs}, we have
\begin{equation*}
 p_{n,k} = \sum_{a, b, c\geq 0} p_{n, k; a, b, c} \qquad \text{ and }\qquad
 p_{n} = \sum_{k\geq 0} p_{n,k}. 
 \end{equation*}

The generating tree approach can be utilized to derive a functional equation
for the associated multivariate generating function
\begin{equation}\label{eq:gf-generatingtree}
 P(z, u, v_1, v_2, v_3) =
 \sum_{\substack{n, k\geq 0\\ a, b, c\geq 0}} p_{n, k; a, b, c} z^n u^k v_1^a v_2^b v_3^c,
\end{equation}
as well as a recurrence scheme the numbers $p_{n, k; a, b, c}$.

We begin with the functional equation for $P(z, u, v_1, v_2, v_3)$, which
is proved by translating the expansion rules from above to the
level of algebraic operations on generating functions.
\pagebreak
\begin{theorem}\label{thm:FE}
 The multivariate generating function $P(z, u, v_1, v_2, v_3)$ as given
 in~\eqref{eq:gf-generatingtree} satisfies the functional equation
 \begin{align}
 \begin{split}\label{eq:functional-generatingtree}
 P(z, u, v_{1}, v_{2}, v_{3})
 &= 1 + z u v_{1} v_{2} v_{3} + z^{2} u v_{1} v_{2} v_{3}^{2}\\
 & \quad + \frac{z u v_{1} v_{2} v_{3}}{1 - v_{1} v_{2} v_{3}}
 \bigg(1 - \frac{z v_{1} v_{2} v_{3}}{1 - v_{1} v_{2}}\bigg)
 \big(P(z, u, v_{1}v_{2}v_{3}, 1, 1) - P(z, u, 1, 1,
 v_{1}v_{2}v_{3})\big)\\
 & \quad + \frac{z^{2} u v_{1} v_{2} v_{3}^{2}}{(1 - v_{1} v_{2})(1 - v_{3})}
 \big(P(z, u, v_{3}, 1, 1) - P(z, u, 1, 1, v_{1} v_{2} v_{3})\big)\\
 & \quad + \frac{z^{2} u v_{1} v_{2} v_{3}^{3}}{(1 - v_{1} v_{2})(1 - v_{3})}
 \big(P(z v_{3}, u, 1, 1, v_{1} v_{2}) - P(z v_{3}, u, 1, 1, 1)\big)\\
 & \quad + \frac{z v_{2} v_{3}}{1 - v_{2}}
 \big(P(z, u, v_{1}, v_{2}, v_{3}) - P(z, u, v_{1}, 1, v_{2} v_{3})\big).
 \end{split}
 \end{align}
\end{theorem}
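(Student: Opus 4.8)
The plan is to translate the generating-tree description directly into algebraic operations on the five-variable series $P$. Since the preceding proposition guarantees that every pop-stacked permutation is produced \emph{exactly once}---starting from the two seeds $\P_{1,1;1,1,1}$ and $\P_{2,1;1,1,2}$ and iterating Rules~\ref{itm:expansion:new1}--\ref{itm:expansion:add}---the series $P$ splits as the sum of the contributions of the empty permutation, the two seeds, and the three expansion rules. The empty permutation gives the constant $1$, and the two seeds give the monomials $zuv_1v_2v_3$ and $z^2uv_1v_2v_3^2$; these are the polynomial terms on the first line of~\eqref{eq:functional-generatingtree}. Everything else is computed by summing, over each source class $\P_{n,k;a,b,c}$ (which carries the monomial $z^nu^kv_1^av_2^bv_3^c$), the monomials of all permutations a given rule creates from it, and then recognising the resulting sums as specialisations of $P$.

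I would dispatch the easy rule first. Rule~\ref{itm:expansion:add} preserves the number of runs and sends $z^nu^kv_1^av_2^bv_3^c$ to $\sum_{i=b+1}^{c} z^{n+1}u^kv_1^av_2^iv_3^{c+1}$; summing the geometric series $\sum_{i=b+1}^{c}v_2^i$ and reading the two endpoints $v_2^{b+1}$ and $v_2^{c+1}$ as the specialisations $P(z,u,v_1,v_2,v_3)$ and $P(z,u,v_1,1,v_2v_3)$ yields exactly the last line of~\eqref{eq:functional-generatingtree} (the only place where $P$ recurs with all three run-variables active). Rule~\ref{itm:expansion:new1} is almost as direct: it carries an extra factor $zu$ and maps $z^nu^kv_1^av_2^bv_3^c$ to $\sum_{i=a+1}^{c}z^{n+1}u^{k+1}(v_1v_2v_3)^i$, so summing this single geometric series and recognising the $a$- and $c$-dependent endpoints as $P(z,u,v_1v_2v_3,1,1)$ and $P(z,u,1,1,v_1v_2v_3)$ produces the ``$1\cdot{}$'' part of the second line.

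The hard part will be Rule~\ref{itm:expansion:new2}, whose contribution carries a factor $z^2u$ and involves the genuinely two-dimensional sum $\sum (v_1v_2)^iv_3^j$ over the coupled range $1\le i\le c$, $a+2\le j\le n+2$, $i<j$. I would split this range at $i=a+1$ according to whether the effective lower cut-off for $j$ is $a+2$ (when $i\le a+1$) or $i+1$ (when $i\ge a+2$), reduce each half to geometric series, and then collect terms by their dependence on $a$, on $c$, and on $n$. Two features make this delicate. First, the upper limit $j\le n+2$ makes the summand carry $z^nv_3^{n+3}=v_3^3(zv_3)^n$, so those tails reassemble into $P$ evaluated at $zv_3$; this is the origin of the third line of~\eqref{eq:functional-generatingtree}. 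Second, the $a$-dependent pieces coming from the two halves of the split must be merged through partial-fraction identities such as $\frac{v_3}{(1-v_3)(1-v_1v_2v_3)}-\frac{1}{(1-v_3)(1-v_1v_2)}=\frac{-1}{(1-v_1v_2)(1-v_1v_2v_3)}$, and one must verify that the spurious boundary terms produced by applying the geometric-series formula outside its nominal range---in particular when the source has a length-one last run, i.e.\ $a=c$---cancel exactly, so that no separate diagonal term survives. After this bookkeeping the surviving $a$-, $c$-, and $n$-contributions of Rule~\ref{itm:expansion:new2}, together with the remaining piece of Rule~\ref{itm:expansion:new1}, assemble into the factor $\bigl(1-\tfrac{zv_1v_2v_3}{1-v_1v_2}\bigr)$ on the second line and into the third and fourth lines.

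Finally I would add the five groups and, as a consistency check, confirm that extracting low-order coefficients reproduces the initial values of $(p_n)$; a single monomial such as $z^3u^2v_1v_2v_3^3$ (corresponding to the permutation $213$) already couples several lines and makes a good sanity test. The genuinely new work is entirely in Rule~\ref{itm:expansion:new2}: balancing the $i<j$ interaction, the $n$-dependent bound that forces $z\mapsto zv_3$, and the cancellation of the degenerate boundary contributions.
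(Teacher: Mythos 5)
Your proposal is correct and follows essentially the same route as the paper: add the three initial monomials, then treat each expansion rule as a linear operator on the monomial $z^nu^kv_1^av_2^bv_3^c$, sum the resulting geometric series, and recognise the boundary terms as specialisations of $P$. In fact you supply more detail than the paper does on Rule~2 (the paper only works out Rule~3 explicitly and asserts the rest), and your identification of the $z\mapsto zv_3$ substitution from the bound $j\leq n+2$ and the partial-fraction merge $\frac{v_3}{(1-v_3)(1-v_1v_2v_3)}-\frac{1}{(1-v_3)(1-v_1v_2)}=\frac{-1}{(1-v_1v_2)(1-v_1v_2v_3)}$ are exactly the computations needed to recover lines two through four of~\eqref{eq:functional-generatingtree}.
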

\begin{proof}
This functional equation is just the reflection of the fact 
that any pop-stacked permutation is either empty, 
or one of the two roots of the generating tree, namely
$1$ (encoded in~\eqref{eq:functional-generatingtree} by the monomial  $z u v_{1} v_{2} v_{3}$) 
or $12$ (encoded in~\eqref{eq:functional-generatingtree} by $z^{2} u v_{1} v_{2} v_{3}^{2}$),
or a permutation obtained by applying one the three expansions rules
to a smaller pop-stacked permutation.

Now, each of the four remaining summands in the right-hand side of~\eqref{eq:functional-generatingtree}
can be explained by considering  that  each of the expansion rules acts as a linear operator 
 on the monomials $z^n u^k v_1^a v_2^b v_3^c$ of the generating function $P$.
While the corresponding calculations are not too
 difficult, they can be a bit tedious --- which is why we choose to illustrate
 it for one of the rules, and give only the results for the remaining ones.

 Let us first consider~\ref{itm:expansion:add}, which describes the injection $\P_{n,k; a,b,c}
 \hookrightarrow \P_{n+1, k; a, i, c+1}$ for $b+1\leq i\leq c$. Starting from
 a given permutation whose associated monomial is $z^n u^k v_1^a v_2^b v_3^c$,
 this rule generates longer permutations by inserting a new second-largest
 element into the last run. On the level of monomials, this corresponds to
 the map
 \begin{equation*} z^n u^k v_1^a v_2^b v_3^c \mapsto
 \sum_{i=b+1}^{c} z^{n+1} v_1^a v_2^i v_3^{c+1} =
 z^{n+1} v_1^a v_2 \frac{v_2^b - v_2^c}{1 - v_2} v_3^{c+1}. \end{equation*}
As this operator is linear, its application to the generating function $P$ (seen as sum of monomials)
 leads to a sum which can itself also be written in terms of $P$; 
this yields a total contribution of
 \begin{equation*} \frac{z v_2 v_3}{1 - v_2} \big(P(z, u, v_{1}, v_{2}, v_{3})
 - P(z, u, v_{1}, 1, v_{2} v_{3})\big).
\end{equation*}
 Similarly,~\ref{itm:expansion:new1} yields a total contribution corresponding 
 to the summand
 \begin{equation*} \frac{z u v_{1} v_{2} v_{3}}{1 - v_{1} v_{2} v_{3}} \bigg(1 - \frac{z v_{1} v_{2} v_{3}}{1 - v_{1} v_{2}}\bigg)
 \big(P(z, u, v_{1}v_{2}v_{3}, 1, 1) - P(z, u, 1, 1, v_{1}v_{2}v_{3})\big), \end{equation*}
 and \ref{itm:expansion:new2} yields the two remaining summands on the right-hand side
 of~\eqref{eq:functional-generatingtree}.
\end{proof}

\pagebreak

This allows us to obtain a 
recurrence relation for $p_{n, k; a, b, c}$ as given in the following theorem.
Therein, for the sake of simplicity, we make use of the Iverson bracket $[\mathsf{expr}]$,
a notation popularized in~\cite{GrahamKnuthPatashnik94}, which evaluates to 1 if
$\mathsf{expr}$ is a true expression, and 0 otherwise.

\begin{theorem}[Polynomial-time computations for pop-stacked permutations]
 Let $N$, $K$, $A$, $B$, $C$ be non-negative integers.
 The number of pop-stacked permutations of length $N$ with $K$ runs where
 the last run starts with $A$, ends with $C$, and where $B$ is either the penultimate
 element (if it exists) or coincides with $A$ and $C$ as given by $p_{N, K; A, B, C}$
 satisfies the recurrence relation
 \begin{align}
 \begin{split}\label{eq:popstack-recurrence}
 p_{N, K; A, B, C}
 & = [A = B = C] \sum_{a=1}^{A-1} \sum_{b=1}^{N-1} \sum_{c=A}^{N-1}
 p_{N-1, K-1; a, b, c}\\
 & \quad + [A = B < C] \sum_{a=1}^{C-2} \sum_{b=1}^{N-2}
 \sum_{c=A}^{N-2} p_{N-2,K-1;a,b,c}\\
 &\quad + [A < B < C] \sum_{b=A}^{B-1} p_{N-1, K; A, b, C-1}
 \end{split}
 \end{align}
 together with the initial conditions $p_{1, 1; 1, 1, 1} = p_{2, 1; 1, 1, 2} = 1$.

The number of pop-stacked permutations of length $n$ with $k$ runs is thus
 \begin{equation}
 p_{n,k}=\sum_{a=1}^n \sum_{b=a}^n \sum_{c=b}^n p_{n,k;a,b,c}
\end{equation}
and the number of pop-stacked permutations of length $n$ is thus
 $p_n=\sum_{k=1}^n p_{n,k}$,
 which can be computed with $\sim n^4/8$ time-complexity and $\sim n^3/3$ memory-complexity. 
\end{theorem}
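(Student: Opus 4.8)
The plan is to read off the recurrence~\eqref{eq:popstack-recurrence} as the exact arithmetic counterpart of the three expansion rules, and then to harvest the two enumeration formulas and the complexity estimate from it. The preceding proposition already tells us that, apart from the base cases $1$ and $12$, every pop-stacked permutation $\sigma$ has a \emph{unique} predecessor $\tilde\sigma$ in the generating tree, and that the rule which produced $\sigma$ is dictated entirely by the length of its last run: a singleton last run ($A=B=C$) can only come from Rule~\ref{itm:expansion:new1}, a last run of length two ($A=B<C$) only from Rule~\ref{itm:expansion:new2}, and a last run of length at least three ($A<B<C$) only from Rule~\ref{itm:expansion:add}. These three mutually exclusive and exhaustive cases are exactly the three Iverson brackets in~\eqref{eq:popstack-recurrence}. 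First I would fix a target tuple $(N,K;A,B,C)$ and observe that, since the predecessor map $\sigma\mapsto\tilde\sigma$ is a bijection onto its image and each valid predecessor reconstructs $\sigma$ uniquely once the target parameters are fixed, the number $p_{N,K;A,B,C}$ equals the number of predecessors $\tilde\sigma$ whose own parameters are compatible with the chosen rule. Summing $p_{\,\cdot}$ over those compatible predecessor parameters then yields each line of the recurrence.

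The actual work is to turn each rule's admissibility constraints into the summation bounds, and this is where I expect the only real friction, since it mixes an off-by-one boundary analysis with the relabelling that accompanies each insertion. Concretely: for the singleton case I would invert Rule~\ref{itm:expansion:new1}, whose condition $a+1\le i\le c$ with $i=A$ forces a predecessor in $\P_{N-1,K-1;a,b,c}$ with $a\le A-1$ and $A\le c\le N-1$ (the bound $c\le N-1$ being the maximal letter of a size-$(N-1)$ permutation), while $b$ may range freely because $p_{N-1,K-1;a,b,c}$ already vanishes unless $a\le b\le c$; this gives the first triple sum, and one checks that $a<A\le c$ is precisely the overlap condition of Theorem~\ref{thm:img_ch} for the new singleton run. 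For the length-two case I would invert Rule~\ref{itm:expansion:new2}, where $i=A$, $j=C$, $1\le i\le c$ and $a+2\le j\le n+2$ with $n=N-2$ translate into $a\le C-2$ and $A\le c\le N-2$, giving the second triple sum. For the long-run case I would invert Rule~\ref{itm:expansion:add}, where $a=A$, $c=C-1$ and $b+1\le i=B\le c$ pin the predecessor to $\P_{N-1,K;A,b,C-1}$ with $A\le b\le B-1$, which is the single sum in the last line. The base cases $p_{1,1;1,1,1}=p_{2,1;1,1,2}=1$ are just the two permutations $1$ and $12$ that the preceding proposition excludes from expansion. Getting every bound and every shift of $K$ (by $1$ for Rules~\ref{itm:expansion:new1}--\ref{itm:expansion:new2}, by $0$ for Rule~\ref{itm:expansion:add}) correct is the crux; a useful cross-check is to re-derive the same identity by extracting coefficients from the functional equation of Theorem~\ref{thm:FE}, and to verify the first few values against the listed sequence~\oeis{A307030}.

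The two summation identities are then immediate: the sets $\P_{n,k;a,b,c}$ partition the pop-stacked permutations of size $n$ with $k$ runs according to the $(\min,\text{penultimate},\max)$ statistics of their last run, so $p_{n,k}=\sum_{a\le b\le c}p_{n,k;a,b,c}$, and partitioning by the number of runs gives $p_n=\sum_k p_{n,k}$. For the complexity I would note that the only genuinely three-dimensional parameter is $(a,b,c)$ with $1\le a\le b\le c\le n$, of which there are $\binom{n+2}{3}\sim n^3/6$; since the recurrence links length $N$ only to lengths $N-1$ and $N-2$, it suffices to retain two consecutive length-slices, giving $\sim n^3/3$ memory. To reach the stated $\sim n^4/8$ time rather than something larger, I would precompute for each length-slice a three-dimensional cumulative-sum table, so that each box sum in the first two lines is evaluated in $O(1)$ and each running sum in the third line in amortized $O(1)$; iterating over the $\sim n^3/6$ triples for each of the $n$ lengths then gives the claimed leading order. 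Finally, since every term on the right of~\eqref{eq:popstack-recurrence} carries index $K$ or $K-1$, summing the recurrence over $K$ closes on the $K$-free quantities $\sum_K p_{N,K;a,b,c}$, so $p_n$ can be computed without ever paying the extra factor of $n$ coming from the run count.
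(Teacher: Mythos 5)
Your derivation of the recurrence is essentially the paper's own (first) route: the paper proves it by ``going back to the combinatorial description of the generation of permutations via the generating tree approach'' (it also offers coefficient extraction from the functional equation of Theorem~\ref{thm:FE} as an alternative, which you correctly flag as a cross-check), and your case analysis --- matching the three Iverson brackets to the three rules via the length of the last run, and translating each rule's admissibility constraints $a+1\le i\le c$, resp.\ $1\le i\le c$ and $a+2\le j\le n+2$, resp.\ $b+1\le i\le c$ into the summation bounds --- is exactly the intended argument, including the observation that $b$ ranges freely in the first two branches because $p_{N,K;a,b,c}$ vanishes off the simplex $a\le b\le c$. The aggregation identities for $p_{n,k}$ and $p_n$ are likewise handled as in the paper.

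Where you genuinely diverge is the complexity analysis, and there your implementation does not quite deliver the precise constants claimed in the statement. The paper does not use a generic three-dimensional prefix-sum table; it introduces the two-dimensional auxiliary quantity $d_{N,K,a,A}=\sum_{b}\sum_{c\ge A} p_{N,K;a,b,c}$ (computed by a backward recurrence in $A$ at quadratic cost per slice) and, crucially, telescopes the third branch as $\sum_{b=A}^{B-1} p_{N-1,K;A,b,C-1}=p_{N,K;A,B-1,C}+p_{N-1,K;A,B-1,C}$, so that the dominant $A<B<C$ case costs exactly one addition per tuple. With your scheme, building a full 3D cumulative table over $\sim n^3/6$ entries costs on the order of $n^3/2$ additions per length (one pass per dimension), i.e.\ $\sim n^4/8$ on its own, and a 3D box-sum query via inclusion--exclusion costs seven operations rather than one; so while you certainly get $\Theta(n^4)$ time, the accounting ``$\sim n^3/6$ triples per length in $O(1)$ each'' lands at best at $\sim n^4/6$ and in practice higher, not the stated $\sim n^4/8$. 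Similarly for memory: the recurrence reaches back to lengths $N-1$ \emph{and} $N-2$, so ``two consecutive slices'' plus the slice under construction is three full 3D arrays, $\sim n^3/2$; the paper gets $\sim n^3/3$ only because the length-$(N-2)$ slice is accessed exclusively through the 2D aggregate $d_{N-2,\cdot,\cdot}$ and therefore never needs to be stored in full. None of this affects the correctness of the recurrence or the $O(n^4)$/$O(n^3)$ orders, but if you want the sharp constants $1/8$ and $1/3$ as stated, you need the paper's tailored auxiliary sequence and the telescoping of the third branch rather than generic prefix sums.
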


\begin{proof}
We can either go back to the combinatorial description of the
generation of permutations via the generating tree approach, or consider
the functional equation~\eqref{eq:functional-generatingtree} and extract
the coefficient of the monomial $z^N u^K v_1^A v_2^B v_3^C$ on both sides
in order to obtain the recurrence relation~\eqref{eq:popstack-recurrence} 
for $p_{n, k; a, b, c}$. 
Now, for the complexity analysis, observe that the triple sums appearing in the first and second
branch of~\eqref{eq:popstack-recurrence} can be computed more efficiently by
defining the auxiliary sequence
\begin{equation*} d_{N, K, a, A} = \sum_{b=1}^{N} \sum_{c=A}^{N} p_{N, K; a, b, c}, \end{equation*}
and note that this sequence can be computed with a backward
recurrence $A$ from $N-1$ to $1$ via 
\begin{equation}\label{eq:auxiliary-recurrence}
d_{N, K, a, A} = d_{N, K, a, A+1} + \sum_{b=a}^{A} p_{N, K; a, b, A}.
\end{equation}
Furthermore, observe that we can actually
rewrite the third branch of the recurrence~\eqref{eq:popstack-recurrence} (for $A < B-1$ and $B < C$) as
\begin{equation*} \sum_{b=A}^{B-1} p_{N-1, K; A, b, C-1} = p_{N, K; A, B-1, C} + p_{N-1, K; A, B-1, C}. \end{equation*}

\pagebreak
We are now able to rewrite the recurrence in the form

\vspace{-4mm}
\begin{align}
 \begin{split}\label{eq:popstack-recurrence:opt}
 p_{N, K; A, B, C}
 & = [A = B = C] \sum_{a=1}^{A-1} d_{N-1, K-1, a, A}\\[-3pt]
 & \quad + [A = B < C] \Big(p_{N-1, K; A, A, A} +
 \sum_{a=A}^{C-2} d_{N-2, K-1, a, A}\Big)\\[-3pt]
 &\quad + [A < B < C] \Big([A < B-1] (p_{N, K; A, B-1, C} + p_{N-1, K; A, B-1, C})\\[-3pt]
 &\qquad \qquad \qquad\qquad 
 + [A = B-1]p_{N-1, K; A, A, C-1}\Big).
 \end{split}
\end{align}

\vspace{-2mm}
This approach allows us to compute $p_{n,k}$ for $1\leq k\leq n\leq N$ in
$O(N^5)$ arithmetic operations. Furthermore, observe that the number of runs is
actually not relevant in the recurrence.
If we were only interested in $p_n$, the number of pop-stacked permutations
of length $n$, then we could drop this additional parameter: this allows us to compute $p_n$ for
$1\leq n \leq N$ in $O(N^4)$ arithmetic operations, with $O(N^3)$ simultaneous allocations in memory. 
Actually, we can be even
more precise and obtain the main asymptotic term of the number of operations:
In the computation of $p_n$ for $1\leq n\leq N$, the branches
of the recurrence~\eqref{eq:popstack-recurrence:opt} are visited for all
$1\leq A\leq B\leq C\leq N$. Investigating these branches more closely
reveals that while the case of $A = B = C$ is asymptotically negligible
for the main term, both the case of $A = B < C$ as well as $A < B < C$
contribute $\sim N^4/24$ to the total number of additions required.
Finally, adding all $p_{n; A, B, C}$ for $1\leq A\leq B\leq C$
and $1\leq n\leq N$, one gets  $\sim N^4/24$ more additions --- which means
that in total the number of additions behaves like $N^4/8$. Similar
considerations show that this approach requires $\sim N^{3}/3$ simultaneous memory allocations.
\end{proof}

A straightforward implementation of the optimized
recurrence~\eqref{eq:popstack-recurrence:opt} in SageMath~\cite{SageMath}
that stores relevant intermediate results in cache memory makes it possible
to compute the first 100 terms of the sequence $p_n$ on a standard desktop PC
in less than 3 minutes. Figure~\ref{fig:recurrence_additions} illustrates
the number of additions carried out by our strategy and confirms our assertion
regarding the main asymptotic term.

\begin{figure}[ht]\vspace{-2.5ex}
\setlength{\abovecaptionskip}{-1mm}\setlength{\belowcaptionskip}{-1mm}
 \centering
 \includegraphics[width=0.42\linewidth]{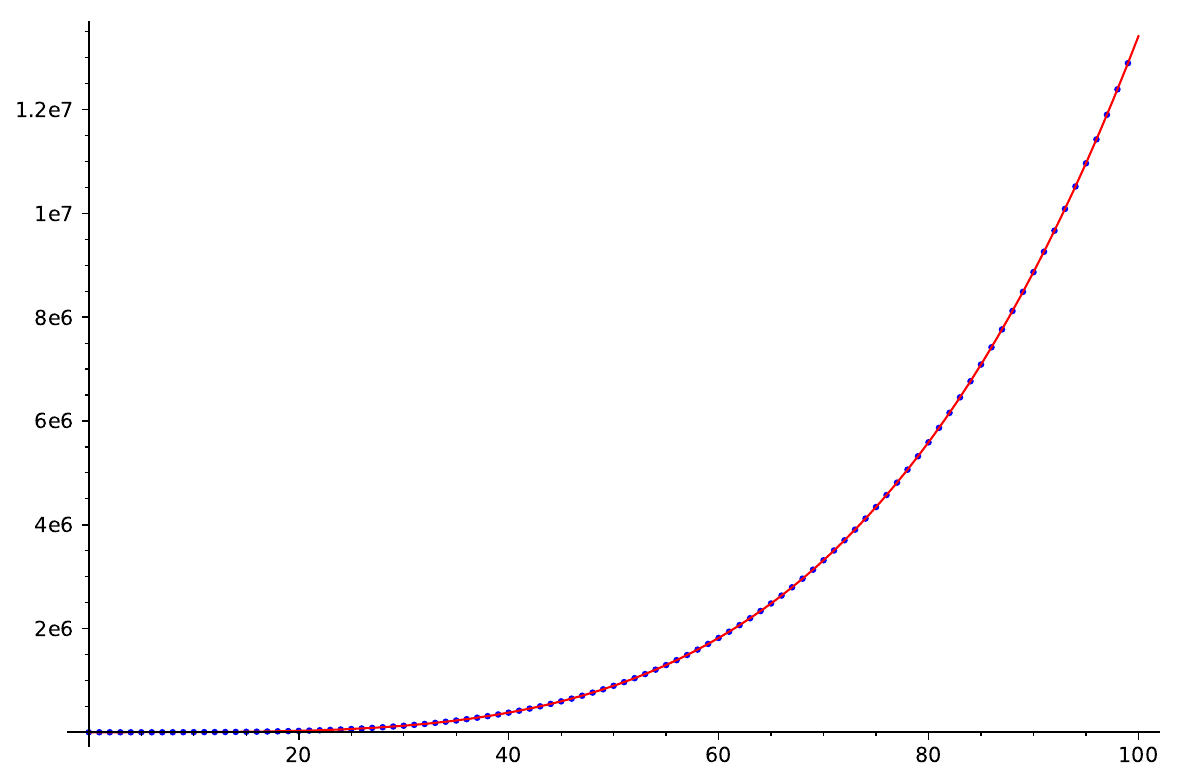}\internallinenumbers
 \caption{Number of additions required to compute $p_n$ for $1\leq n\leq 100$.
 The experimental cost (blue dots) nicely fits a quartic polynomial (in red) 
 whose main term is $n^4/8$.}
 \label{fig:recurrence_additions}
\end{figure}

However, unfortunately, neither the recurrence scheme nor the functional equation
for the generating function yield sufficient leverage to carry out an exact
analysis of the asymptotic behaviour of the sequence $p_n$. In the following
section, we provide a lower bound for the growth of $p_n$ and give pointers
to experimental observations.

\pagebreak
\subsection{Asymptotics of pop-stacked permutations}\label{sec:asy}

In many cases, the growth of restricted families of permutations is much less than~$n!$,
 and is just exponential:
for example, by the Stanley--Wilf conjecture (proved by Marcus and Tardos~\cite{MarcusTardos}), this
always holds for permutation classes defined by \textit{classical} forbidden patterns.
It is natural to ask whether this is the case for pop-stacked permutations.
We now prove that they in fact grow much faster.
\smallskip

\begin{theorem}[Superexponential growth of pop-stacked permutations]~\label{thm:asympt}
The asymptotic growth of the number of pop-stacked permutations is at least $\exp(n \ln(n) - n \ln(2)-n+o(n))$.
\end{theorem}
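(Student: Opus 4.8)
The plan is to exhibit an explicit lower bound by counting a sufficiently rich subfamily of pop-stacked permutations. The cleanest route uses the scanline encoding from Proposition~\ref{thm:seq}: pop-stacked permutations of size $n$ with $k$ runs are in bijection with words in $\{1,\dots,k\}^n$ in which, for each $j$ with $1\le j\le k-1$, both an occurrence of $j$ before an occurrence of $j+1$ \emph{and} an occurrence of $j+1$ before an occurrence of $j$ appear. Since $p_n=\sum_k p_{n,k}\ge p_{n,k}$ for any single choice of $k$, it suffices to pick $k$ growing linearly with $n$ and bound from below the number of such admissible words. A natural and almost tight choice is $k\approx n/2$, because the smallest pop-stacked permutation with $k$ runs already has size $\lfloor 3k/2\rfloor$ (as noted after the $P_k$ computations), so taking $k$ proportional to $n$ is exactly the regime that should dominate the count.

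First I would reduce the admissibility condition to something easy to count. The requirement is that each letter $j\in\{1,\dots,k\}$ appears at least once, and that no two consecutive values $j,j+1$ are ``block-separated'' (all copies of one strictly before all copies of the other). The dominant contribution comes from words where every value appears and the consecutive-interleaving conditions are generically satisfied; the words to be \emph{excluded} form a lower-order set. So the strategy is: count all surjective words in $\{1,\dots,k\}^n$ (those using every letter at least once), which is $k!\,\genfrac{\{}{\}}{0pt}{}{n}{k}$ via surjections, and then argue that the admissibility constraints remove only a subexponential-in-the-leading-term fraction. Concretely, I would lower-bound $p_{n,k}$ by the number of surjective words minus the number of words violating at least one of the $2(k-1)$ order conditions, and show the latter is negligible for the chosen $k$.

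The main computation is then an asymptotic estimate. With $k=\lfloor n/2\rfloor$ the number of surjections $k!\,\genfrac{\{}{\}}{0pt}{}{n}{k}$ can be estimated by Stirling's formula; writing $k=cn$ and optimizing the exponential rate, the term $k!\sim (k/e)^k\sqrt{2\pi k}$ already contributes $\exp\!\big(k\ln k - k\big)$, and with $k\sim n/2$ this produces $\exp\!\big(\tfrac{n}{2}\ln\tfrac{n}{2}-\tfrac{n}{2}\big)$ from the factorial alone; combining with the Stirling-number factor and simplifying yields the target rate $n\ln n - n\ln 2 - n + o(n)$. I would present this as: choose $k=\lfloor n/2\rfloor$, bound $p_n\ge p_{n,k}\ge (\text{surjective words})-(\text{violating words})$, and evaluate the logarithm of the leading term, checking that the subtracted violating-word count has a strictly smaller exponential order and hence is absorbed into the $o(n)$ correction.

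The hard part will be controlling the violating words cleanly, i.e.\ verifying that the admissibility conditions do not erode the leading asymptotic order. A union bound over the $2(k-1)$ forbidden order-patterns is the natural tool: for each fixed $j$, the words in which all copies of $j{+}1$ precede all copies of $j$ are counted by partitioning positions and summing over the split point, and one must check that this, summed over $j$, grows slower (in the exponent) than $k!\,\genfrac{\{}{\}}{0pt}{}{n}{k}$. I expect this to work because forcing a block-separation rigidly orders two symbol classes and thereby loses a positive fraction of the $k!$ orderings, costing a factor that is exponentially small relative to the full surjection count; making this quantitative---showing the loss is at most $e^{o(n\ln n)}$---is the only genuinely delicate estimate, and it is where I would spend the most care.
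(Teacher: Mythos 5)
Your route through the scanline encoding of Proposition~\ref{thm:seq} is genuinely different from the paper's argument, and the surjection count itself is not the problem: with $k=\lfloor n/2\rfloor$ one has, say, at least $k!\,k^{k}$ surjective words (fix a bijection on the first $k$ positions and fill the rest freely), which is already $\exp(n\ln n-n\ln 2-\tfrac{n}{2}+o(n))$, comfortably above the target. The genuine gap is in the step where you discard the violating words by a union bound. In the regime $k=\Theta(n)$ that your computation requires, each letter appears only $O(1)$ times on average, so for a fixed $j$ the event that the copies of $j$ and $j+1$ are block-separated is \emph{not} exponentially rare -- it has probability bounded below by a positive constant (for instance, if $j$ and $j+1$ each appear twice, a uniformly random interleaving is block-separated with probability $2/\binom{4}{2}=1/3$). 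Your assertion that forcing a block-separation ``costs a factor that is exponentially small relative to the full surjection count'' is therefore false: each of the $2(k-1)$ forbidden patterns captures a constant fraction of the surjective words, the union bound totals $\Theta(n)$ times the surjection count, and the subtraction $(\text{surjective})-(\text{violating})$ gives a negative quantity. Shrinking $k$ to make each letter appear $\omega(1)$ times (e.g.\ $k=n/\log n$) does make the union bound work, but then $k^n=\exp(n\ln n-n\ln\log n)$ falls below the target rate, since $n\ln\log n\gg n$. So the approach as written cannot be completed with a first-moment argument; one would need a second-moment, local-lemma, or correlation-inequality argument to show that a $e^{-O(n)}$ fraction of surjective words satisfy all $k-1$ interleaving conditions simultaneously, and you have not supplied that.

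For comparison, the paper sidesteps all of this with a direct injection: given any two permutations $\pi,\tau$ of size $n/2$, intertwining them (placing them on the odd and even positions, with one set of values shifted above the other) produces a permutation whose adjacent runs automatically overlap, hence a pop-stacked permutation, and the map $(\pi,\tau)\mapsto\sigma$ is injective. This gives $p_n\ge\bigl((n/2)!\bigr)^2$ with no error term to control, and Stirling's formula yields the stated rate immediately. If you want to salvage your counting approach, the cleanest fix is to restrict to a subfamily of words where the interleaving conditions hold by construction rather than generically -- which, in the scanline picture, essentially reconstructs the paper's intertwining.
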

\begin{proof}
We achieve this by constructing an explicit class of permutations, as follows.
Assume that~$n$ is even, and consider any pair of permutations, $\pi$ and $\tau$,
each of size $n/2$.
\textit{Intertwine} them as shown in Figure~\ref{fig:asympt}, i.e.~consider the permutation $\sigma$ of size $n$ defined by
\begin{equation}
\text{for } 1 \leq j \leq n/2 : \ \ \ \sigma(2j-1)=\pi(j), \ \sigma(2j)=\tau(j).
\end{equation}

\begin{figure}[hb]\setlength{\abovecaptionskip}{0.8mm}
\setlength{\belowcaptionskip}{-1mm}
\centering
\includegraphics[scale=0.64]{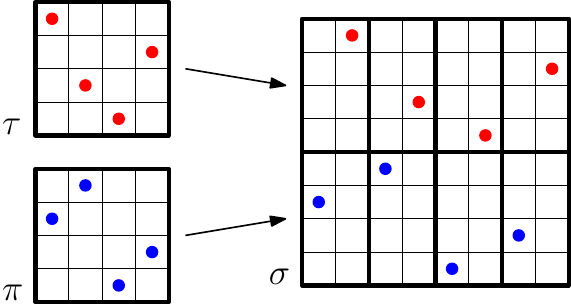}
\internallinenumbers
\caption{Intertwining two permutations gives a pop-stacked permutation and proves Theorem~\ref{thm:asympt}.}
\label{fig:asympt}
\end{figure}

It is easy to see that such $\sigma$ is necessarily pop-stacked
(it clearly satisfies the overlapping run condition from Theorem~\ref{thm:img_ch}).
Since the mapping $(\pi, \tau) \to \sigma$ is injective,
we conclude that we have at least $((n/2)!)^2$ pop-stacked permutations of size $n$.
Then, Stirling's approximation for the factorial leads to the theorem.
Note that the fact that the above argument holds for even $n$ only is not a restriction:
by inserting $n+1$ at the end of any pop-stacked permutation of length $n$, 
one obtains a new pop-stacked permutation of size $n+1$; 
 therefore the sequence $p_n$ is strictly increasing, and thus the Stirling bound given in the claim remains valid for odd $n$.
 \end{proof}

Due to the nature of the counting sequence and the fact that we have no
appropriate representation of the associated generating function, an exact
asymptotic analysis of the growth behaviour of the sequence remains a challenge.
The above theorem gives a lower bound of $.5^n n!$, it is also possible 
to compare with Andr\'e's alternating permutations to get a lower bound of $.63^n n!$ (see e.g.~\cite[p.~5]{FlajoletSedgewick09}).
Note that the authors of~\cite{ClaessonGudhmundssonPantone19} carried out an
experimental analysis using automated fitting and differential approximation.
Their analysis of the counting sequence led them to conjecture that the
corresponding exponential generating function possesses an infinite number
of singularities, thus implying non-D-finiteness. Ultimately,
they conjecture that the asymptotic growth of the sequence $p_n$ counting
pop-stacked permutations of size $n$ is 
\begin{equation*} p_n \sim (0.695688\ldots) \cdot (0.898118\ldots)^n \cdot n!; \end{equation*}
see~\cite[Section 3.3]{ClaessonGudhmundssonPantone19} for more details.

\pagebreak

\section{Best cases of flip-sort: permutations with low cost}\label{sec:2ps}
\subsection{Notation and earlier work}

Some of the earlier work on pop-stack sorting dealt with permutation with \textbf{low} cost.
The following notation was introduced there.

\begin{definition}\label{def:kPS}
A permutation $\pi$ is \textit{$k$-pop-stack-sortable} if $\cost(\pi) \leq k$
(or, equivalently, if $\imt{k}(\pi) = \id$).
\end{definition}

$1$- and $2$-pop-stack-sortable permutations
are shown by the red and the blue frames
in the pop-stack-sorting tree in Figure~\ref{fig:tree}.

Several results concerning $k$-pop-stack-sortability are known:

\begin{itemize}
\item Avis and Newborn~\cite{AvisNewborn81} 
proved that a permutation is $1$-pop-stack-sortable if and only if it is layered\footnote{A 
layered permutation is a direct sum of its falls, for example $321|654|87$. See also Definition~\ref{def:lay} below.}.
\item Pudwell and Smith~\cite{PudwellSmith19} found a structural characterization of 
$2$-pop-stack-sortable permutations, as well as a bijection between such permutations and 
\textit{polyominoes on a twisted cylinder of width~$3$} (see~\cite{AleksandrowiczAsinowskiBarequet13}). 
Moreover, they proved that the generating function for $2$-pop-stack-sortable permutations is rational
(we shall reprove their result below).
\item Claesson and Gu{\dh}mundsson~\cite{ClaessonGudhmundsson17} 
generalized the latter result showing that for each fixed~$k$, 
the generating function for $k$-pop-stack-sortable permutations is rational.
\end{itemize}

\subsection{The pre-images of \texorpdfstring{$1$}{1}-pop-stack-sortable permutations}

We begin with a nice enumerative 
result in which the notion of $1$- and $2$-pop-stack-sortable permutations
is combined with our notion of pop-stacked permutations.
Specifically, it concerns those $1$-pop-stack-sortable permutations
that belong to the image of $T$ (thus,
corresponding to internal nodes in the right column 
before the identity in the pop-stack tree from Figure~\ref{fig:tree}).

\begin{theorem}Pop-stack layered permutations have the following properties:
\begin{enumerate}
\item Let $\LI_n$ be the set of pop-stacked layered permutations
(or, equivalently, of $1$-pop-stack-sortable permutations that belong to $\imt{}$)
of size $n$. The generating function for $(|\LI_n|)_{n \geq 1}$ is
\begin{equation}\label{eq:GFLI}
 x+\frac{(x+x^2)^2}{1-(x+x^2+x^3)} = x+x^2+3x^3+5x^4+9x^5+17x^6+31x^7+\ldots. 
\end{equation}
Consequently, this family is enumerated by ``tribonacci numbers''
 (\oeis{A000213}).
\item Let $\tau \in \LI_n$.
All the permutations $\pi$ such that $T(\pi)=\tau$ can be constructed 
by the following procedure:
\begin{itemize}
\item Put \textup{primary bars} at all descents of $\tau$.
\item Optionally, put \textup{secondary bars} at some descents so that 
each primary bar has at least one neighbouring position without any bar.
\item Reverse all the blocks determined by the bars.
\end{itemize}
 
\end{enumerate}
\end{theorem}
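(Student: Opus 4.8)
The plan is to handle both parts through the run-structure of layered permutations together with the overlap characterization of $\im$ from Theorem~\ref{thm:img_ch}.

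For Part 1 I would first invoke the standard correspondence between layered permutations of size $n$ (equivalently, by Avis--Newborn, $1$-pop-stack-sortable permutations) and compositions $(c_1,\dots,c_k)$ of $n$, where $c_i$ is the length of the $i$-th layer. The question then becomes: which compositions give a permutation that additionally lies in $\im$, i.e.\ (by Theorem~\ref{thm:img_ch}) has all adjacent runs overlapping? The key structural observation is that in a layered permutation the descents occur \emph{only} inside layers, so every run is either a ``climbing run'' — starting at the bottom of a layer, passing through intervening length-$1$ layers, and ending at the top of the next layer of length $\geq 2$ — or an isolated middle element of a layer of length $\geq 3$. Consequently every pair of adjacent runs lives inside a single layer of length $\geq 2$, and the overlap condition can be tested layer by layer.

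I expect the case analysis to give: a layer of length $\geq 4$ always produces two adjacent singleton runs (consecutive middle elements) that fail to overlap; a layer of length $3$ overlaps correctly exactly when it is neither first nor last (so that its top and bottom are absorbed into genuinely climbing runs reaching below, resp.\ above, its middle element); a length-$2$ layer is always fine unless it is the sole layer; and length-$1$ layers never cause trouble. This analysis, and in particular noticing that \emph{interior} length-$3$ layers are admissible, is the main obstacle for Part 1 — it is precisely what upgrades the naive Fibonacci guess to the tribonacci answer. Granting it, the admissible compositions are the single composition $(1)$ for $k=1$, and for $k\geq 2$ any composition whose first and last parts lie in $\{1,2\}$ and whose interior parts lie in $\{1,2,3\}$. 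Translating to generating functions, each of the first and last parts contributes $x+x^2$ and the arbitrarily long block of interior parts contributes $1/(1-(x+x^2+x^3))$, yielding $x+(x+x^2)^2/(1-(x+x^2+x^3))$, which I would expand and match against \oeis{A000213}.

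For Part 2 I would characterize the pre-images directly. Since $T$ reverses the falls of $\pi$, we have $T(\pi)=\tau$ if and only if $\tau$ decomposes into consecutive ascending blocks whose reversals are exactly the falls of $\pi$; equivalently, $\pi$ is obtained by cutting $\tau$ into blocks and reversing each. Two conditions must hold: each block is ascending, which forces a cut at every descent of $\tau$ (these are the primary bars); and each reversed block is a \emph{maximal} fall, which amounts to $\min(\text{left block})<\max(\text{right block})$ at every cut. I would then show that the inequality is automatic at a cut placed inside a run (an optional secondary cut, sitting at an ascent of $\tau$), because both sub-blocks are then ascending and nested in value; hence the only genuine constraints sit at the primary bars, where maximality fails exactly when both adjacent blocks have been reduced to singletons. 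Rephrasing ``not both neighbours are singletons'' as the stated bar condition, and observing that the falls of $\pi$ recover the blocks — and thus the bars — uniquely, gives the desired bijection between admissible bar placements and pre-images.

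The delicate point, and the main obstacle for Part 2, is the boundary bookkeeping: deciding whether a neighbour of a primary bar counts as ``free'' requires treating the two ends of $\tau$ as already blocked (so that, e.g., an initial singleton top of the first layer does not spuriously satisfy the condition), and one must verify that the overlap hypothesis on $\tau$ makes the all-runs-reversed placement always admissible. This last fact follows cleanly from Theorem~\ref{thm:img_ch}, since a pop-stacked permutation can never have two adjacent singleton runs (their overlap would force an ascent where the run boundary forces a descent), guaranteeing that the construction produces at least one pre-image and that every primary bar in it already has a non-singleton neighbour.
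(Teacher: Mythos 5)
Your proposal is correct and follows essentially the same route as the paper: for Part~1 the paper likewise reduces membership in $\LI_n$ to the condition that the outer falls (i.e.\ layers) have length at most $2$ and the inner falls length at most $3$, and reads off the tribonacci generating function as $x+(x+x^2)^2/(1-(x+x^2+x^3))$; for Part~2 it uses the same primary/secondary-bar analysis, with the constraint $\min(\text{left block})<\max(\text{right block})$ failing at a primary bar exactly when both adjacent blocks are singletons. Your additional remarks --- that the ends of the permutation act as bars, and that pop-stackedness forbids two adjacent singleton runs so the bar condition is always satisfiable --- merely make explicit details the paper leaves implicit.
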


\pagebreak
\begin{proof}\vspace{-3mm}
\begin{enumerate}\setlength\itemsep{-1mm}
\item Let $\tau$ be a layered permutation of size $n$.
Partition it into falls.
Refer to the first and the last falls as \textit{outer falls},
to all other falls as \textit{inner falls}.
For $n\geq 3$
it is easy to see that we have non-overlapping adjacent runs
if and only if
there is an outer fall of size $\geq 3$
or an inner fall of size $\geq 4$.
For $n=2$ the permutation $12$ is pop-stacked, and the permutation $21$ is not.
It follows that for $n \geq 2$ we have $\tau \in \LI_n$
if and only if it has at least two falls,
the outer falls being of size $1$ or $2$,
the inner falls of size $1$ or $2$ or $3$.
This implies the generating function~\eqref{eq:GFLI}.
\item As $\tau$ is obtained from $\pi$ by reversing all its falls,
$\pi$ is obtained from $\tau$ by reversing its runs --- possibly, further partitioned.
That is, we must first partition $\tau$ into runs (by primary bars),
then optionally further partition the runs (by secondary bars),
then reverse all the blocks.

We need to prove the condition on secondary bars.
Suppose a primary bar separates a descent $b_i|b_{i+1}$
(we have $b_i > b_{i+1}$).
If we put bars at both adjacent gaps, $|b_i|b_{i+1}|$,
then we have the same values in positions $i$ and $i+1$ in $\pi$.
However, this would be a descent, that is, a part of a fall in $\pi$, 
and it should be reversed when we apply $T$.
Otherwise, both sub-runs (that from the left and that from the right of the primary bar)
will yield, upon reversal, two distinct falls in $\pi$, 
that will recover these sub-runs when we apply $T$.
\end{enumerate} \vspace{-7mm}
\end{proof}

\textit{Example.} Let $\tau=13254687$.
The primary bars are inserted at the descents: 
$13|25|468|7$.
All the possibilities to insert secondary bars 
(for the sake of clarity, we indicate them by $\cdot$)
are $13|25|468|7$, $1{\cdot}3|25|468|7$, $13|2{\cdot}5|468|7$, $13|25|4{\cdot}68|7$, $1{\cdot}3|25|4{\cdot}68|7$,
which gives all the pre-images of $\tau$:
$31|52|864|7$, $1{\cdot}3|52|864|7$, $31|2{\cdot}5|864|7$, $31|52|4{\cdot}86|7$, $1{\cdot}3|52|4{\cdot}86|7$.
Further, it can be shown that the number of pre-images of $\tau \in \LI_n$ is always the product of some Fibonacci numbers depending on the sequence of sizes of falls of $\tau$.

\subsection{\texorpdfstring{$2$}{2}-pop-stack-sortable permutations and lattice paths}

In this section we extend some of the results by Pudwell and Smith~\cite{PudwellSmith19}.
Namely, we reprove one of their theorems and prove 
(a generalization of) one of their conjectures. 
All this is done via the uniform framework of lattice paths.
By doing this, we not only construct a bijection between $2$-pop-stack
sortable permutations and lattice paths, but also benefit from the fact 
that the theory of lattice paths is well developed (see e.g.~\cite{BanderierFlajolet02}), 
thus offering additional structural insight.

First, we reprove the theorem by Pudwell and Smith in a more combinatorial way.
Then we construct a bijection between $2$-pop-stack-sortable permutations and
a certain family of lattice paths, which enables us to prove 
two of their conjectures.

\begin{theorem}\label{thm:PS} For $n \geq 1$, $0 \leq k \leq n-1$, let $a_{n,k}$ be 
the number of $2$-pop-stack-sortable permutations of size~$n$ with exactly $k$ ascents.
\begin{enumerate}
\item \textbf{(Pudwell and Smith~\cite{PudwellSmith19}, Thm.~2.5)}
Let $A(x,y)$ be the bivariate generating function $A(x,y) =\sum a_{n,k} x^n y^k$. Then we have
\begin{equation}\label{eq:2PSbivariate}
A(x,y) = \frac{x(1+x^2y)}{1-x-xy-x^2y-2x^3y^2}.
\end{equation}
\item \textbf{(For $k=0$, Pudwell and Smith~\cite{PudwellSmith19}, Conj.~4.3)} 
Let $k \geq 0$.
The generating functions
for ``diagonal-parallel'' arrays are
\begin{equation}\label{eq:7x}
\begin{array}{l}
\displaystyle{D_k(x)=\sum_{n \geq 0} a_{2n+k+1, n+k}x^n = 
\sqrt{\frac{1+x}{1-7x}} \Bigg( \frac{1-x-\sqrt{(1+x)(1-7x)}}{2x} \Bigg)^k
},\\
\displaystyle{D_{-k}(x)=\sum_{n \geq 0} a_{2n+k+1, n}x^n = 
\sqrt{\frac{1+x}{1-7x}} \Bigg( \frac{1-x-\sqrt{(1+x)(1-7x)}}{2x(1+2x)} \Bigg)^{k}
}.
\end{array}
\end{equation}
For the ``diagonal array'' $a_{2n+1, n}$ (that is, for $k=0$) we have explicit formulas
\begin{equation}\label{eq:a_n}
a_{2n+1, n} =
\sum_{i=0}^{n-1} (-1)^i 2^{n-i} \binom{2(n-i)}{n-i} \binom{n-1}{i}
=\sum_{k\geq 0} \binom{n}{2k} \binom{2k}{k} 2^{2k+1}
 3^{n-2k-1} \bigg(2 - \frac{k}{n}\bigg).
\end{equation}
\end{enumerate}
\end{theorem}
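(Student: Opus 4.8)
The plan is to first turn $2$-pop-stack-sortability into a purely local condition on the falls of $\pi$, and then to read off both the rational bivariate generating function (part~1) and the algebraic diagonal generating functions (part~2) from one and the same lattice-path encoding. The starting point is that $\cost(\pi)\leq 2$ means $T^2(\pi)=\id$, i.e.\ $T(\pi)$ is itself $1$-pop-stack-sortable, hence (by Avis--Newborn) $T(\pi)$ must be layered. Since $T$ reverses every fall of $\pi$ and layered permutations are exactly the increasing sequences of falls (equivalently, the permutations avoiding $231$ and $312$), I would first translate the requirement ``$T(\pi)$ is layered'' into the statement that each fall of $\pi$, together with its immediate neighbours, obeys a short finite list of admissible local configurations. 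This recovers the Pudwell--Smith structural characterization, and the payoff of re-deriving it this way is that it exhibits $\pi$ as assembled from left to right out of gadgets of bounded size, so that only a bounded amount of information about the current suffix is needed.

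Reading off the rational generating function is then a matter of translating this assembly into a sequence construction. Writing $x$ for size and $y$ for the number of ascents, a $2$-pop-stack-sortable permutation decomposes into an initial gadget followed by a sequence of appended blocks, where each appended block creates one junction gap together with its internal gaps. The admissible blocks should contribute $x+xy$ (size one), $x^2y$ (size two), and $2x^3y^2$ (size three, the coefficient $2$ recording the two admissible internal layouts), while the initial gadget contributes the numerator $x(1+x^2y)$. The sequence construction then gives $A(x,y)=\frac{x(1+x^2y)}{1-x-xy-x^2y-2x^3y^2}$, reproving the Pudwell--Smith formula in a transparent combinatorial way.

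For the second part I would reinterpret the very same decomposition as a weighted lattice path, turning ascents into up-steps and descents into down-steps so that the terminal height equals the excess of ascents over descents. Along the diagonal reading $a_{2n+k+1,\,n+k}$ this excess is the constant $k$ and $x$ records the number of descents, so $D_k(x)$ becomes the generating function of these paths from height $0$ to height $k$; equivalently, $D_k$ is an algebraic diagonal of the rational function $A$. Introducing a catalytic variable for the current height and writing the step-by-step functional equation, the relevant kernel is the quadratic $x\,Y^2-(1-x)\,Y+(1+2x)=0$, whose discriminant $(1+x)(1-7x)$ is exactly the radicand in the statement and whose power-series root is $Y=\frac{1-x-\sqrt{(1+x)(1-7x)}}{2x}$. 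The kernel method then yields the geometric factorization $D_k=D_0\,Y^k$ with $D_0=\sqrt{\frac{1+x}{1-7x}}$, while the companion family $D_{-k}$ arises identically from the mirror reading in which ascents and descents exchange roles, which is what produces the extra factor $(1+2x)^{-k}$. Finally, the explicit formulas for $a_{2n+1,n}=[x^n]D_0$ follow by expanding $(1+x)^{1/2}(1-7x)^{-1/2}$: the generalized binomial theorem gives the first signed sum, a second expansion of the same algebraic function yields the trinomial-type sum, and the two agree by a routine hypergeometric identity.

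The main obstacle is the bijection itself. Everything downstream---the sequence construction, the kernel computation, and the coefficient extractions---is routine once the path model is fixed, but pinning down the exact step set and multiplicities so that the kernel is precisely $xY^2-(1-x)Y+(1+2x)$ requires a careful reading of the local characterization. In particular, correctly accounting for the weight $2$ on the size-three gadget and for the asymmetry that separates $D_k$ from $D_{-k}$ (the factor $1+2x$) is the delicate bookkeeping on which the whole argument rests.
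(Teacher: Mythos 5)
Your overall strategy coincides with the paper's: both rest on the Pudwell--Smith local characterization of $2$-pop-stack-sortability (each ascent separating falls $F_i,F_{i+1}$ is either \emph{regular}, $\max(F_i)<\min(F_{i+1})$, or \emph{twisted}, $\max(F_i)=\min(F_{i+1})+1$, the latter being admissible only when the ascent has an adjacent descent), followed by an encoding into $\{\U,\D\}$-walks with bicoloured corner-adjacent up-steps, a kernel/bridge--excursion computation for the diagonals, and a coefficient extraction. Your part-2 machinery is in fact correct in all checkable details: the kernel $xY^2-(1-x)Y+(1+2x)=0$ is exactly the paper's kernel $u-t-tu^2-t^2u-2t^3u^2=0$ under $Y=1/(tu)$, $x=t^2$; its discriminant is $(1+x)(1-7x)$; the power-series root is the stated factor; and the $(1+2x)^{-k}$ asymmetry between $D_k$ and $D_{-k}$ is genuine (the paper realizes it via $W_{+k}=B\cdot((t+2t^3)E)^k$ versus $W_{-k}=B\cdot(tE)^k$).

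The genuine gap is in part 1, and you have named it yourself: the block decomposition with atoms of weight $x$, $xy$, $x^2y$ and $2x^3y^2$ is read off from the denominator of the answer rather than constructed, and the naive atomization does not work as stated. The obstruction is that ``twisted'' is a \emph{neighbourhood} condition satisfiable from either side: an atom ``twisted ascent packaged with a descent'' of weight $x^2y$ would have to exist in two orientations ($\D$ before or after the $\xa$), which double-counts words such as $\D\,\xa\,\D$ where a single twisted ascent has two justifying neighbours. One must therefore fix a convention (e.g.\ always charge the twisted ascent to the descent on one designated side, with a boundary correction at the left end --- which is what the numerator $x(1+x^2y)$ and the $2x^3y^2$ atom are secretly absorbing), and this convention is the entire content of the proof of part 1; without it neither \eqref{eq:2PSbivariate} nor the path model feeding part 2 is established. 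The paper avoids this issue by writing an explicit symbolic-method specification for Dyck walks in which corner-adjacent $\U$-steps are marked (equation~\eqref{eq:LP2Weq}), solving for $W(t,u,y)$ and setting $y=2$; a clean sequence decomposition does exist (cutting at descents, i.e.\ into units $\D\,\xa^{m}$ with colour factors $1$, $2$, $4$ according to $m=0$, $m=1$, $m\geq 2$, plus boundary units), but it is not the one you wrote down. A second, more minor, inaccuracy: the first signed sum in \eqref{eq:a_n} does not come from a direct generalized-binomial expansion of $(1+x)^{1/2}(1-7x)^{-1/2}$ (that yields a different-looking sum in powers of $7$); it comes from rewriting $D_0=\bigl(1-\tfrac{8x}{1+x}\bigr)^{-1/2}$ and expanding in central binomial coefficients.
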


The two-dimensional array $a_{n,k}$ is shown in Figure~\ref{fig:a}.
The $n$th row ($n \geq 1$) contains coefficients $a_{n, 0}, a_{n, 1}, \ldots, a_{n, n-1}$.
The ``diagonal-parallel'' arrays from part 2 are vertical in this drawing
($k=0$ corresponds to $1, 4, 20, 116, \ldots$, 
$k=1$ to $1, 8, 48, 296, \ldots$, etc.).

\begin{figure}[h]
\setlength{\belowcaptionskip}{-1mm}
\centering
\includegraphics[scale=.8]{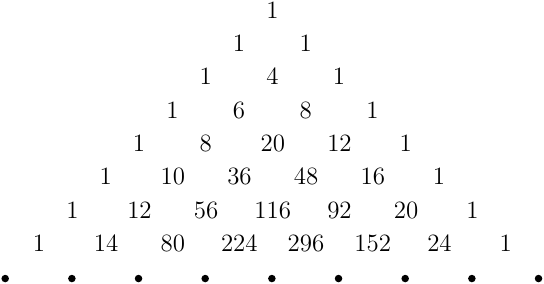} 
\internallinenumbers
\caption{The two-dimensional array $a_{n,k}$, $n \geq 1$, $0 \leq k \leq n-1$.
The coefficient $a_{n,k}$ is the number of $2$-pop-stack-sortable permutations with exactly $k$ ascents.}
\label{fig:a}
\end{figure}

\begin{proof}
\begin{enumerate}
\item 
It is shown in~\cite[Lemma~2.1]{PudwellSmith19} that 
a permutation $\pi$ is $2$-pop-stack-sortable 
if and only if for each pair of its adjacent falls, $F_i$ and $F_{i+1}$, 
one has $\max(F_i) \leq \min(F_{i+1})+1$.\footnote{Notice 
that layered permutations are similarly characterized by $\max(F_i) = \min(F_{i+1})-1$.}
Consider an ascent that separates two adjacent falls $F_i$ and $F_{i+1}$.
We say that this ascent is \textit{regular} if $\max(F_i) < \min(F_{i+1})$,
and \textit{twisted} if $\max(F_i) = \min(F_{i+1}) +1$.
An ascent can be twisted only when at least one of the falls $F_i$ or $F_{i+1}$
is of size $>1$ (equivalently, when at least one of the adjacent gaps is a descent).

As Pudwell and Smith show, $2$-pop-stack-sortable permutations 
are bijectively encoded by sequences of their ascents and descents
(we use $\xa$ for ascents, $\xd$ for descents), where for each $\xa$ 
that has an adjacent $\xd$ it is indicated whether this $\xa$ is regular or twisted.
We find it convenient to use instead lattice paths:
we replace each $\xa$ by the \textit{up-step} $\U=(1,1)$,
$\xd$ by the \textit{down-step} $\D=(1,-1)$,
and \textit{corner-adjacent} $\U$-steps 
(that is, $\U$-steps which have at least one adjacent $\D$-step) will be bicoloured: 
$\U$-steps that correspond to a regular ascent will be coloured black,
$\U$-steps that correspond to a twisted ascent will be coloured red.
In this way, we obtain a certain family of Dyck walks.
The length of a walk is smaller by $1$ than the size of the corresponding permutation,
and the final altitude is $\#(\xa)-\#(\xd)$.
See Figure~\ref{fig:2PS_LP} for illustration.

\begin{figure}[h]
\centering
\includegraphics[scale=1]{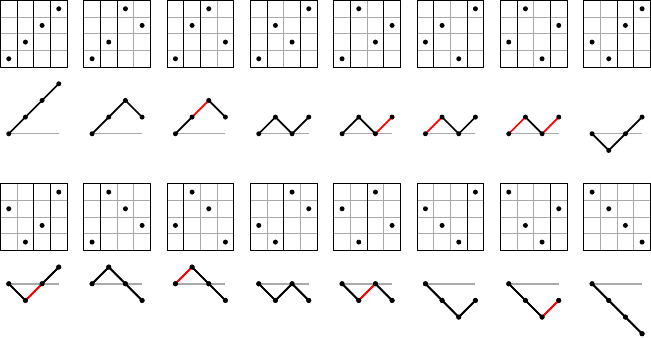} 
\internallinenumbers
\caption{Encoding of $2$-pop-stack-sortable permutations by Dyck walks
with bicoloured corner-adjacent $\U$-steps.}
\label{fig:2PS_LP}
\end{figure}

To enumerate such walks, we make use of the symbolic method
(see~\cite[Chapter I]{FlajoletSedgewick09}) to
find a combinatorial specification for (non-coloured) Dyck walks in which 
corner-adjacent $\U$-steps are marked by $y$:
\begin{equation*}
\mathcal{W}=
\epsilon \
\cup \
\mathsf{Seq}^{\geq 1} (\U) \times
\bigg(
\epsilon \,
\cup \,
 y
\mathsf{Seq}^{\geq 1} (\D) \times
\left(
\epsilon \,
\cup \,
\U
y \times \mathcal{W}
\right)
\bigg) \
\cup \
\mathsf{Seq}^{\geq 1} (\D) \times
\bigg(
\epsilon \,
\cup\,
\U
y \times \mathcal{W}
\bigg).
\end{equation*}

 This leads to the following functional equation for the
trivariate generating function $W(t,u,y)$, where 
$t$ is the variable for length,
$u$ for final altitude,
$y$ for occurrences of corner-adjacent $\U$-steps:
\begin{equation}\label{eq:LP2Weq}
W = 1 + \frac{tu}{1-tu}\Bigg( 1+ y \frac{t/u}{1-t/u} \bigg( 1 + tuy W \bigg) \Bigg) +
\frac{t/u}{1-t/u} \Bigg(1 + tuy W \Bigg).
\end{equation}
This yields the generating function
\begin{equation}\label{eq:LP2Wsol} 
W(t,u,y)=\frac{u\big( 1-t^2(1-y) \big)}{u-t-tu^2+t^2u(1-y)+t^3u^2y(1-y)},
\end{equation}
and the bivariate generating function for those walks in which corner-adjacent $\U$-steps
are \textit{bicoloured}, is
\begin{equation}\label{eq:LP2Wsol2} 
W(t,u,2)=\frac{u\big( 1+t^2 \big)}{u-t-tu^2-t^2u-2t^3u^2}.
\end{equation}
Upon performing the transformation that corresponds to the way in which we rearranged the array
of the coefficients,
we obtain $xW(x\sqrt{y}, \sqrt{y},2) = A(x,y)$. 
This completes the proof of part 1.

\item First, we find the generating function $E(t, y)$ for excursions --- 
that is, those walks that stay (weakly) above and terminate at the $t$-axis
--- in our model of Dyck walks with marked corner-adjacent $\U$-steps.
The functional equation for $E(t,y)$ is 
\begin{equation}\label{eq:LP2Eeq}
E=1+\frac{t^2\big(y+(E-1)\big)}{1-t^2yE},
\end{equation}
which yields 
\begin{equation}\label{eq:LP2Esol} 
E(t,y)=\frac{1-t^2(1-y)-\sqrt{\big(1-t^2(1-y)\big)\big(1-t^2(1+3y) \big)}}{2t^2y}.
\end{equation}
Therefore, the generating function for Dyck excursions with \textit{bicoloured} corner-adjacent $\U$-steps is
\begin{equation}\label{eq:LP2Esol2} 
E(t,2)=\frac{1+t^2-\sqrt{(1+t^2)(1-7t^2)}}{4t^2}.
\end{equation}
(In fact, we have $E(t,y) = 1+\frac{t^2y}{1-t^2} M \Big(\frac{t^2y}{1-t^2} \Big)$, 
where $M(x)$ is the generating function for Motzkin numbers. This can be shown bijectively, 
starting from the fact that the number of Dyck excursions
of semilength $n$ with exactly $k$ corner-adjacent $\U$-steps, is $\binom{n-1}{k-1}M_{k-1}$,
where $M_{k-1}$ is the $(k-1)$st Motzkin number. 
A similar result on Dyck paths with marked pattern $\U\D\U$
was proved by Sun in~\cite[Thm. 2.2]{Sun04}.)

\smallskip

The ``diagonal coefficients'' $a_{2n+1, n}=[x^{2n+1}y^{n}]$ of $A(x,y)$ 
correspond to coefficients $[t^{2n} u^0]$ of $W(t,u,2)$. Therefore we look for 
the generating function $B(t,y)$ for \textit{bridges} --- 
that is, those walks that terminate at the $t$-axis
--- in our model. It can be routinely found by residue analysis
(extracting $[u^0]$ in $W(t,u,y)$). However, we give a more structural proof.
We decompose a bridge into an alternating
sequence of excursions and anti-excursions ($=$~rotated by $180^\circ$ excursions that stay weakly
below the $x$-axis). Since we use only $\U$ and $\D$ steps, such a decomposition is unambiguous.
Moreover, the generating function for anti-excursions is the same as for excursions.
When we join excursions and anti-excursions, new corner-adjacent $\U$-steps are never created.
Therefore we have
\begin{equation}\label{eq:LP2Bsol} 
B(t,y)=\frac{E(t,y)}{1-(E(t,y)-1)}=
\frac{\sqrt{1-t^2(1-y)}}{\sqrt{1-t^2(1+3y)}},
\end{equation}
and finally
\begin{equation}\label{eq:LP2Bsol2} 
B(t,2)=\frac{\sqrt{1+t^2}}{\sqrt{1-7t^2}} = 1+4t^2+20t^4+116t^6+708t^8+4452t^{10}+\ldots.
\end{equation}

Notice that we can rewrite this as 
\begin{equation*}B(t,2)=
\frac{1}{\sqrt{1- \frac{8t^2}{1+t^2} }}
 = \frac{1}{\sqrt{1-4x}}\bigg|_{x=2t^2/(1+t^2)},\end{equation*}
and the coefficients of $1/\sqrt{1-4x}$ are well known to be central binomial coefficients.
This directly leads to the first closed-form formula of Equation~\eqref{eq:a_n} for $a_{2n+1,n}$.
The second equivalent closed-form formula is proved easily via closure properties from holonomy theory; see~\cite{PetkovsekWilfZeilberger96}.

Finally, we consider generating functions $W_k(t,2)$ for walks that terminate at fixed altitude~$k$.
For unmarked paths we have the classical decomposition
$W_k(t)=B(t) (tE(t))^k$. Taking care of corner-adjacent $\U$-steps,
we obtain, for $k \geq 0$, 
\begin{equation*}
W_{+k}(t,2) 
=
\frac{\sqrt{1+t^2}}{\sqrt{1-7t^2}}
\left(\frac{1-t^2-\sqrt{(1+t^2)(1-7t^2)}}{2t}\right)^k,
\end{equation*}
and
\begin{equation*}
W_{-k}(t,2) 
=
\frac{\sqrt{1+t^2}}{\sqrt{1-7t^2}}
\left(\frac{1-t^2-\sqrt{(1+t^2)(1-7t^2)}}{2t(1+2t^2)}\right)^k,
\end{equation*}
which, upon due modification, yields~\eqref{eq:7x}.
\end{enumerate}\vspace{-10mm}
\end{proof}

\bigskip
\bigskip

\begin{minipage}{.5\textwidth}\internallinenumbers
Finally, if we look again at \eqref{eq:LP2Wsol2} and rewrite it in the form
$W(t,u,2)= \frac{1+t^2}{1-\frac{t}{u}-tu-t^2-2t^3u}$
then it is immediate from the theory of lattice paths
that it is also the bivariate generating function for lattice paths
that start either at $(0,0)$ or at $(2,0)$,
and use steps
$(1,-1)$, $(1,1)$, $(2,0)$, and bicoloured $(3,1)$.

\begin{center}
 \includegraphics[scale=0.8]{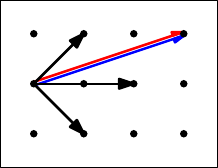}
 \end{center}

These paths can also be used to obtain formulas~\eqref{eq:7x} for diagonal-parallel 
arrays, and the computations are even easier 
because the bicolouration of some steps does not require considering their adjacent steps.
\end{minipage}

\vspace{-8.5cm} \hspace{8.5cm}
\includegraphics[scale=0.8]{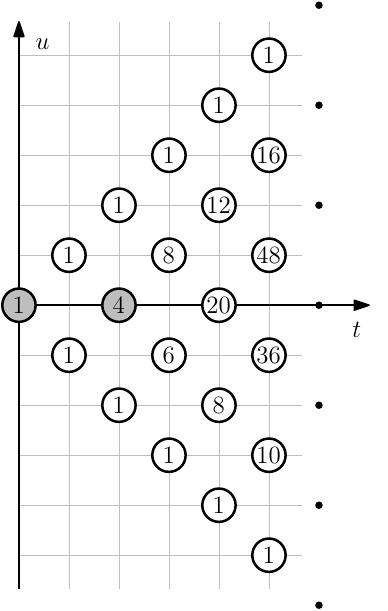}

\pagebreak
\section{Worst-cases of flip-sort: permutations with high cost}\label{sec:long}

\begin{center}
\begin{minipage}{0.76\textwidth}
``\textit{Conventional wisdom might say that extremal problems tend to be very
difficult to solve exactly if the extreme configurations are so diverse and
irregular.}''\\ Martin Aigner \& G\"unter Ziegler, in {\em Proofs from THE BOOK}.
\end{minipage}
\end{center}

This claim by Aigner and Ziegler is concerning an extremal problem in geometry 
which is in fact solved by understanding the worst cases of flip-sort!
We refer to~\cite[Chapter 12]{AignerZiegler18} and~\cite{Ungar82} for more details on this geometric problem.
In this section, we strongly extend these studies by proving structural and enumerative results 
concerning the evolution of permutations during the flip-sort process,
focussing on permutations with worst costs.
We start with some definitions, notation, and basic observations.

\subsection{Definitions and notation}\label{sec:3def}

By $\id_n$ we denote the identity permutation of size $n$,
and by $-\id_n$ we denote its reversal:
$\id_n = 12 \ldots n$, and $-\id_n = n \ldots 21$.

\begin{definition}[Layered and skew-layered permutations]\label{def:lay}
The \textit{layered permutation} $\oplus(m_1, \ldots, m_s)$
is 
the direct sum of $-\id_{m_1}, \ldots, -\id_{m_s}$.
The \textit{skew-layered permutation} $\ominus(m_1, \ldots, m_s)$
is 
the skew sum of $\id_{m_1},\ldots, \id_{m_s}$.\footnote{We refer to
\textit{Permutation patterns: basic definitions and notation}
by David Bevan~\cite{bevan2015}
for the definitions of direct sum and of skew sum.}
For example, $\oplus(2,1,3)=213654$ and $\ominus(2,1,3)=564123$.
\end{definition}

\begin{definition}[$k$-shadows]
Let $\pi$ be a permutation of size $n$,
and let $k$ be a fixed number, $1 \leq k \leq n-1$.
Let $S:=\{1, \ldots, k\}$, $L:=\{k+1, \ldots, n\}$
($S$ for \textit{small}, $L$ for \textit{large}).
The \textit{$k$-shadow} of~$\pi$
is defined to be the $\{\xs, \xl\}$-word $\sh_k(\pi)$
obtained from $\pi$ by replacing all the numbers from~$S$ by $\xs$,
and all the numbers from $L$ by $\xl$.
When $k$ is fixed, we shall usually omit the subscript and write just $\sh(\pi)$.
\textit{Example:} for $\pi=6317524$ 
we have $\sh_3(\pi)=\xl \xs \xs \xl \xl \xs \xl$.

For $1 \leq j \leq k$,
we denote by $s_j(\pi)$ the position of the $j$th occurrence of $\xs$
counted \textbf{from left} in $\sh(\pi)$;
for $1 \leq j \leq n-k$,
we denote by $\ell_j(\pi)$ the position of the $j$th occurrence of $\xl$
counted \textbf{from right} in $\sh(\pi)$.
In the example just considered, we have
$s_1(\pi)=2$,
$s_2(\pi)=3$,
$s_3(\pi)=6$;
$\ell_1(\pi)=7$,
$\ell_2(\pi)=5$,
$\ell_3(\pi)=4$,
$\ell_4(\pi)=1$.
\end{definition}

\begin{definition}[$\xs$- and $\xl$-paths, $k$-wiring diagram]
Let $\pi$ and $k$ be as above.
For $1 \leq j \leq k$, the \textit{$j$th $\xs$-path} is the sequence $(s_j(T^m(\pi)))_{m\geq 0}$.
For $1 \leq j \leq n-k$, the \textit{$j$th $\xl$-path} is the sequence $(\ell_j(T^m(\pi)))_{m\geq 0}$.
The \textit{$k$-wiring diagram} is a drawing on the rectangular grid
with infinitely many rows, labelled (from above) by $0, 1, 2, \ldots$,
that correspond to $\pi, T(\pi), T^2(\pi), \ldots$;
and $n$ columns labelled by $1, 2, \ldots, n$.
In this drawing,
we represent the $j$th $\xs$-path ($1 \leq j \leq k$)
by a polygonal \textcolor{red}{red line} connecting
the points $(m, s_j(T^m(\pi)))_{m \geq 0}$,
and
we represent the $j$th $\xl$-path ($1 \leq j \leq n-k$)
by a polygonal \textcolor{blue}{blue line} connecting
the points $(m, \ell_j(T^m(\pi)))_{m \geq 0}$.
The $j$th $\xs$-path (resp.\ $\xs$-path) of $\pi$ will be denoted by $\xs_j(\pi)$ (resp.\ by $\xl_j(\pi))$.
See Figure~\ref{fig:wd_ex} for an example.

Remark: Since we show below that after the $(n-1)$st row the paths stay in the same position
(namely, the $\xs_j(\pi)$ in the position $j$,
and the $\xl_j(\pi)$-path in the position $n+1-j$),
we draw $k$-wiring diagrams with only $n$ rows,
from the one corresponding to $\pi$ to the one corresponding to $T^{n-1}(\pi)$.
\end{definition}
\pagebreak

\begin{figure}[ht]
\centering
\includegraphics[scale=0.8]{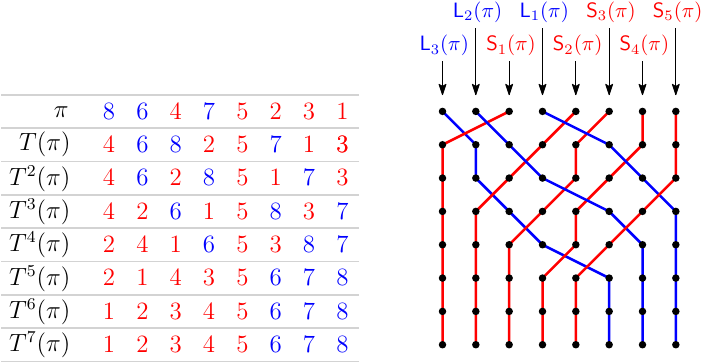}\internallinenumbers
\caption{The $k$-wiring diagram for $\pi = 86231745$ and $k=5$.}
\label{fig:wd_ex}
\end{figure}

In the following considerations, the $k$-wiring diagram
of the permutation $\rho_k:=\ominus(n-k, k)$ will play an important role.
When $k$ is fixed, we shall usually omit the subscript and write just $\rho$.
It is easy to verify that $\sh(\rho)=\xl^{n-k}\xs^k$,
and that for each $m\geq 0$ the word
$\sh(T^{m+1}(\rho))$ is obtained from $\sh(T^{m}(\rho))$
by replacing each consecutive occurrence of $\xl\xs$
by $\xs\xl$.
This yields
\begin{equation}\label{eq:wd_rho}
\begin{tabular}{c}
$s_j(T^m(\rho))=
\left\{
\begin{tabular}{lll}
$n-k+j$, & & $0 \leq m \leq j-1$, \\
$n-k+2j-m-1$, & & $j-1 \leq m \leq n-k+j-1$, \\
$j$, & & $n-k+j-1 \leq m$;
\end{tabular}
\right.$\\
\mbox{} \\
$\ell_j(T^m(\rho))=
\left\{
\begin{tabular}{lll}
$n-k+1-j$, & & $0 \leq m \leq j-1$, \\
$n-k-2j+m+2$, & & $j-1 \leq m \leq k+j-1$, \\
$n+1-j$, & & $k+j-1 \leq m$.
\end{tabular}
\right.$
\end{tabular}
\end{equation}
Accordingly,
the $k$-wiring diagram of $\rho$ has a very distinctive shape,
with $\xs$- and $\xl$-paths typically consisting of three segments.
This is illustrated in Figure~\ref{fig:wd_rho}
(since we later combine the $k$-wiring diagram of the fixed permutation $\rho$
with that of an arbitrary permutation $\pi$,
the $\xs$- and $\xl$-paths of $\rho$ will be shown by thick pink and light-blue lines).
Finally, we notice that, since the sets $S$ and $L$ are already sorted in $\rho$,
all the values along $\xs_j(\rho)$ are $j$,
and
all the values along $\xl_j(\rho)$ are $n+1-j$,
so that for $\rho=\ominus(n-k,k)$
the $k$-wiring diagram
coincides with the \textit{usual} wiring diagram where paths connect occurrences of the same value.
\begin{figure}[ht]
\centering
\includegraphics[scale=0.85]{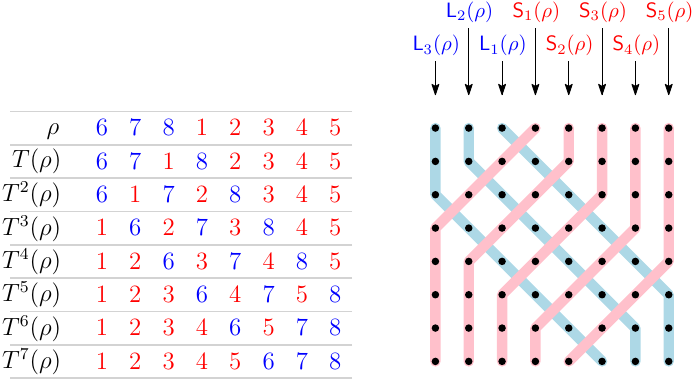}\internallinenumbers
\caption{The $k$-wiring diagram for $\rho=\ominus(n-k, k)$ for $n=8$, $k=5$.}
\label{fig:wd_rho}
\end{figure}

\begin{definition}[Poset of words]\label{def:poset}
Assume $1 \leq k \leq n-1$.
Let $\x(k,n-k)$ be the set of words of length $n$ with $k$ occurrences of $\xs$
and $n-k$ occurrences of $\xl$.
For $\lambda, \mu \in \x(k, n-k)$ we write $\lambda \preccurlyeq \mu$
if
for each $j$, $1 \leq j \leq k$,
the position of the $j$th $\xs$ in $\lambda$
is weakly to the left from
the position of the $j$th $\xs$ in $\mu$.
Equivalently,
$\lambda \preccurlyeq \mu$ if for each $j$, $1 \leq j \leq n-k$,
the position of the $j$th $\xl$ in $\lambda$
is weakly to the right from
the position of the $j$th $\xl$ in $\mu$\footnote{We remind that
the occurrences of $\xs$ are counted \textbf{from left}, and
the occurrences of $\xl$ are counted \textbf{from right}.}.
For example, $\xs\xs\xs\xl\xl \li \xs\xl\xs\xl\xs \li \xl\xs\xl\xs\xs$.
It is easy to verify that $\preccurlyeq$ is a partial order relation,
and that these two definitions are indeed equivalent.

In fact, $\preccurlyeq$ is the transitive closure of the relation
``$u$ is obtained from $v$ by replacing some consecutive occurrence of $\xl\xs$ by $\xs\xl$''.
In particular, it follows 
that $\xl^{n-k} \xs^k$ --- the maximum element of $\x(k, n-k)$ --- 
covers just one element: $\xl^{n-k-1} \xs\xl\xs^{k-1}$,
and
that $\xs^k \xl^{n-k} $ --- the minimum element of $\x(k, n-k)$ ---
is covered by just one element: $ \xs^{k-1}\xl\xs\xl^{n-k-1}$.

See Figure~\ref{fig:poset} for the Hasse diagram of
$\x(k, n-k)$ with the partial order
$\preccurlyeq$,
for $n=6, k=3$ (the words marked by blue colour are the $3$-shadows of $\rho, T(\rho), \ldots$).
\end{definition}
\begin{figure}[th]
\centering
\includegraphics[scale=0.74]{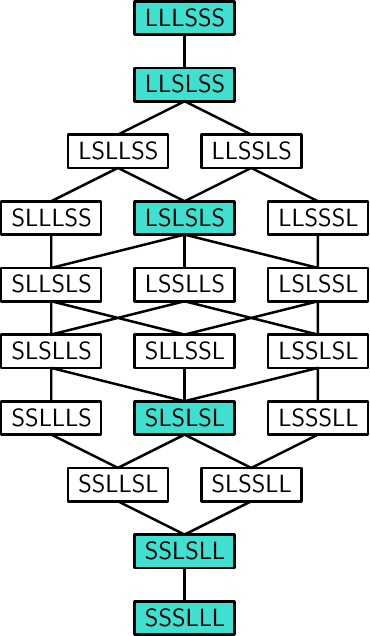}
\internallinenumbers
\caption{The Hasse diagram of $\x(k, n-k)$ with the partial order
$\preccurlyeq$, for $n=6, k=3$.
The $k$-shadows of $\rho, T(\rho),T^2(\rho),\ldots$ are shown by blue colour.}
\label{fig:poset}
\end{figure}

\begin{definition}[Bandwidth]
The \textit{bandwidth} (also called maximum displacement) 
of a permutation $\pi=a_1a_2\ldots a_n$ is
\begin{equation}
\displaystyle{\di(\pi) := \max_{1 \leq i \leq n} |a_i-i|}.
\end{equation}

The inequality $\di(\pi) \leq r$ corresponds to
the diagram of $\pi$ being $(2r+1)$-diagonal.
This means that it is entirely contained in the main diagonal and its $r$
shifts to either side.
Obviously, for each permutation $\pi$ of size $n$,
we have $\di(\pi) \leq n-1$,
and $\di(\pi) = 0$ if and only if $\pi = \mathrm{id}$.
\end{definition}

\subsection{The worst case: \texorpdfstring{$n-1$}{n-1} iterations, and a key statistic: bandwidth of permutations}\label{sec:width}

The main result of this section is the following theorem about the bandwidth of a permutation in $\imt{m}$.
In terms of permutation diagrams, this theorem says that grey areas
as in Figure~\ref{tab:ev1200} do not contain any points of respective diagrams.

\begin{theorem}\label{thm:width}
If $\tau=b_1b_2\ldots b_n \in \imtm$
(where $0 \leq m \leq n-1$),
then we have
$\di(\tau) \leq n-1-m$.
\end{theorem}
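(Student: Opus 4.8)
The plan is to translate the bandwidth bound into a statement about the $k$-shadows and then to prove that statement by comparing the flip-sort evolution of an arbitrary $\pi$ with that of the extremal permutation $\rho=\rho_k=\ominus(n-k,k)$, whose wiring diagram is computed explicitly in \eqref{eq:wd_rho}. First I would fix a preimage $\pi$ with $T^m(\pi)=\tau$ and record the elementary reduction: writing $p(v)$ for the position of the value $v$, one has $\di(\tau)\le r$ iff $|p(v)-v|\le r$ for every $v$. The upper inequality ``$p(v)\le v+r$ for all $v$'' is, with $r=n-1-m$, equivalent to
\[ s_k(\tau)\le k+(n-1-m)\qquad\text{for every threshold }1\le k\le n-1, \]
since $s_k(\tau)=\max_{v\le k}p(v)$. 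The lower inequality ``$p(v)\ge v-r$'' is the upper inequality for the reverse-complement $\bar\tau$: one checks $T(\bar\sigma)=\overline{T(\sigma)}$, so $\sigma\mapsto\bar\sigma$ commutes with $T$, preserves both $\imtm$ and $\di$, and swaps the two sides. Hence it suffices to prove $s_k(T^m\pi)\le n+k-1-m$ for each fixed $k$ and all $0\le m\le n-1$.

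The structural observation driving everything is that $\sh_k(T\sigma)$ is obtained from $\sh_k(\sigma)$ by sorting, inside each fall-interval of $\sigma$, all the $\xs$'s before all the $\xl$'s: reversing a fall orders its values increasingly, which places its small values to the left of its large values. Call this operation $\Phi_{\mathcal I}$, where $\mathcal I=\mathcal I(\sigma)$ is the fall partition. On the other hand, by the $\Psi$-description of $\sh(T^m\rho)$ recorded just before \eqref{eq:wd_rho}, the shadow of $\rho$ evolves by the simpler rule $\Psi$ that replaces each maximal block $\xl^a\xs^b$ by $\xl^{a-1}\xs\xl\xs^{b-1}$ (i.e.\ swaps every $\xl\xs$ junction). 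The target is the domination $\sh_k(T^m\pi)\li\sh_k(T^m\rho)$ for all $m$, proved by induction on $m$; combined with $s_k(T^m\rho)\le n+k-1-m$, read off \eqref{eq:wd_rho} with $j=k$, this gives the desired bound (the base case $m=0$ is trivial since $\sh_k(\rho)$ is the maximum of $\x(k,n-k)$).

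The induction is cleanest in terms of prefix counts: using $u\li v\iff N_{\xs}(u,t)\ge N_{\xs}(v,t)$ for all prefix lengths $t$, where $N_{\xs}(w,t)$ counts the $\xs$'s among the first $t$ letters, I would establish two lemmas. First, the monotonicity of $\Psi$: if $u\li v$ then $\Psi(u)\li\Psi(v)$, which follows because $\Psi$ changes $N_{\xs}(w,\cdot)$ only by adding the junction indicator $[\,w_t=\xl,\ w_{t+1}=\xs\,]$. Second, the crux $\sh_k(T\sigma)\li\Psi(\sh_k(\sigma))$ for every $\sigma$: one always has $N_{\xs}(\Phi_{\mathcal I}w,t)\ge N_{\xs}(w,t)$, and at every junction position $t$ (where $w_t=\xl,\ w_{t+1}=\xs$) the gain is at least $1$, because such a junction is a descent, hence interior to a single fall-interval, so sorting that interval pulls the $\xs$ at $t+1$ to the left of position $t$; a short count gives $N_{\xs}(\Phi_{\mathcal I}w,t)\ge N_{\xs}(w,t)+1=N_{\xs}(\Psi w,t)$ there. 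The inductive step then reads
\[ \sh_k(T^{m+1}\pi)\ \li\ \Psi\big(\sh_k(T^m\pi)\big)\ \li\ \Psi\big(\sh_k(T^m\rho)\big)\ =\ \sh_k(T^{m+1}\rho), \]
using the crux, then monotonicity with the induction hypothesis, then the $\Psi$-evolution of $\rho$.

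The hard part is the crux: controlling $\Phi_{\mathcal I}$ uniformly over all fall-partitions $\mathcal I$ compatible with a given shadow, since the shadow alone does not determine the falls of $\sigma$. The decisive point that makes this uniform is that $\xl\xs$ junctions are \emph{forced} descents and therefore never lie on an interval boundary (boundaries occur only at ascents), so each junction is genuinely sorted by $\Phi_{\mathcal I}$, matching or beating the single swap performed by $\Psi$; the prefix-count formulation turns this into the elementary per-position inequalities above, and the monotonicity of $\Psi$ is then routine. Assembling the resulting bound over all thresholds $k$, together with its reverse-complement mirror for the lower side, yields $\di(\tau)\le n-1-m$.
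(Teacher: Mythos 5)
Your proposal is correct and follows essentially the same strategy as the paper's proof: majorize $\sh_k(T^m\pi)$ by $\sh_k(T^m\rho)$ for $\rho=\ominus(n-k,k)$ via an inductive one-step domination lemma, read off the explicit formula \eqref{eq:wd_rho}, and vary $k$ to bound each $|b_i-i|$. Your factorization of the paper's Proposition~\ref{thm:lambda} into $\Psi$-monotonicity plus the inequality $\sh_k(T\sigma)\li\Psi(\sh_k(\sigma))$ (argued via prefix counts), and your use of reverse-complement symmetry in place of treating the $\xl$-paths separately, are only presentational variations on the same argument.
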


Notice that for $m=n-1$ we obtain that
a permutation in $\imt{n-1}$ has $\di = 0$, and thus we
recover Ungar's result mentioned above, $\imt{n-1}=\{\id\}$.
The proof that we give is also a generalization of
Ungar's proof of this special case\footnote{Note that another perspective 
 on Ungar's result can also be found in the recent work of Albert and Vatter~\cite{AlbertVatter2020}, 
inspired by Knuth's zero-one principle~\cite[Theorem Z of Section 5.3.4]{Knuth73},
and offering links with a nondeterministic sort.\linebreak}.

\smallskip

The proof of Theorem~\ref{thm:width} will follow from the following propositions.

\begin{proposition}\label{thm:lambda}
Let $\sigma$ be a permutation of size $n$.
Let $1 \leq k \leq n-1$.
Let $\lambda \in \x(k, n-k)$ 
(the poset defined in Definition~\ref{def:poset})
such that $\lambda \neq \xs^k \xl^{n-k}$.
Let $\lambda'$ be the word obtained from $\lambda$
by replacing each consecutive occurrence of $\xl\xs$ by
$\xs\xl$. Then we have:
If $\sh(\sigma) \li \lambda$
then $\sh(T(\sigma)) \li \lambda'$.
\end{proposition}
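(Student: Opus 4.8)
The plan is to reformulate the order $\li$ in terms of prefix counts and then track how both $T$ (acting on $\sigma$) and the swap map $\lambda \mapsto \lambda'$ change these counts. For a word $w \in \x(k,n-k)$ and $0 \le i \le n$, let $f_w(i)$ denote the number of occurrences of $\xs$ among the first $i$ letters of $w$. The dual form of Definition~13 that I would use is: $\mu \li \nu$ holds if and only if $f_\mu(i) \ge f_\nu(i)$ for every $i$ (pushing the $\xs$'s weakly to the left is the same as accumulating them weakly earlier). Thus the hypothesis $\sh(\sigma) \li \lambda$ reads $f_{\sh(\sigma)}(i) \ge f_\lambda(i)$ for all $i$, and the goal becomes $f_{\sh(T(\sigma))}(i) \ge f_{\lambda'}(i)$ for all $i$.

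First I would record the effect of $T$ on the shadow. Since $T$ reverses every fall and the values inside a fall are decreasing, each fall occupies a block of positions on which $\sh(\sigma)$ reads $\xl^a\xs^b$; after the reversal this block reads $\xs^b\xl^a$. Hence $T$ only moves $\xs$'s leftward inside each fall, which gives the monotonicity $f_{\sh(T(\sigma))}(i) \ge f_{\sh(\sigma)}(i)$ for all $i$. A direct count on a single block also yields the sharper statement I will need: if the $i$th letter of $\sh(\sigma)$ is $\xl$ and position $i$ lies in a fall that also contains an $\xs$, then $f_{\sh(T(\sigma))}(i) \ge f_{\sh(\sigma)}(i) + 1$ (a prefix ending in the $\xl^a$-part of $\xl^a\xs^b$ sees no $\xs$ before reversal but at least one after).

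The swap map is easy to control: two $\xl\xs$-adjacencies can never share a letter, so the swaps realizing $\lambda \mapsto \lambda'$ are pairwise disjoint, and one checks that $f_{\lambda'}(i) = f_\lambda(i) + \varepsilon_i$, where $\varepsilon_i = 1$ exactly when $\lambda$ has $\lambda_i=\xl,\ \lambda_{i+1}=\xs$, and $\varepsilon_i = 0$ otherwise. With both ingredients fixed, I would argue pointwise in $i$. If $\varepsilon_i = 0$, then $f_{\sh(T(\sigma))}(i) \ge f_{\sh(\sigma)}(i) \ge f_\lambda(i) = f_{\lambda'}(i)$. If $\varepsilon_i = 1$ but there is slack, i.e.\ $f_{\sh(\sigma)}(i) \ge f_\lambda(i)+1$, then monotonicity of $T$ gives $f_{\sh(T(\sigma))}(i)\ge f_{\sh(\sigma)}(i)\ge f_\lambda(i)+1=f_{\lambda'}(i)$.

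The remaining case, $\varepsilon_i = 1$ together with the slack-free equality $f_{\sh(\sigma)}(i) = f_\lambda(i)$, is the crux. Here I would squeeze $\sh(\sigma)$ against $\lambda$ at the neighbouring positions: from $\lambda_i=\xl,\ \lambda_{i+1}=\xs$ we get $f_\lambda(i-1)=f_\lambda(i)$ and $f_\lambda(i+1)=f_\lambda(i)+1$, and combining with $f_{\sh(\sigma)}(i)=f_\lambda(i)$, the monotonicity of $f_{\sh(\sigma)}$, and the hypothesis at $i-1$ and $i+1$ forces $f_{\sh(\sigma)}(i-1)=f_{\sh(\sigma)}(i)$ and $f_{\sh(\sigma)}(i+1)=f_{\sh(\sigma)}(i)+1$. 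Equivalently, the $i$th letter of $\sh(\sigma)$ is $\xl$ and the $(i+1)$st is $\xs$. So $\sigma$ has a large value at $i$ and a small value at $i+1$, hence a descent there, which places $i$ and $i+1$ in a common fall; that fall contains the $\xs$ at $i+1$, so the sharp part of the $T$-estimate applies and gives $f_{\sh(T(\sigma))}(i) \ge f_{\sh(\sigma)}(i)+1 = f_\lambda(i)+1 = f_{\lambda'}(i)$. This slack-zero analysis is the only subtle point: it is exactly where one must show that the flip manufactures a fresh leading $\xs$ precisely at the positions where $\lambda'$ demands one. The excluded case $\lambda = \xs^k\xl^{n-k}$ is the degenerate one in which no swap occurs and $\lambda'=\lambda$, where the statement is immediate from monotonicity alone.
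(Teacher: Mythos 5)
Your proof is correct and follows essentially the same route as the paper's: both rest on the facts that each fall of $\sigma$ has shadow $\xl^a\xs^b$ (so $T$ moves every $\xs$ weakly left, and strictly left when an $\xl$ precedes it inside its fall) and that each $\xs$ of $\lambda$ advances by at most one position in $\lambda'$, with all the difficulty concentrated in the tight case at an $\xl\xs$-factor of $\lambda$. The only difference is cosmetic: you track prefix counts of $\xs$ and argue directly, whereas the paper tracks the positions $s_j$ of the $j$th $\xs$ and closes the tight case by contradiction via the $(j-1)$st $\xs$ --- dual bookkeeping for the same argument.
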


\begin{proof}
The word $\sh(T(\sigma))$ is obtained from $\sh(\sigma)$
by flips of the form
$\xl^a\xs^b \to \xs^b \xl^a$ (for some $a,b \geq 0$),
induced by flips in $\sigma$.
It follows that for each $j$ ($1 \leq j \leq k$)
we have $s_j(\sh(T(\sigma))) \leq s_j(\sh(\sigma))$.
Moreover, if $s_j(\sh(\sigma)) = \alpha$
and the $(\alpha-1)$st position in $\sh(\sigma))$
is $\xl$, then we have $s_j(\sh(T(\sigma))) < s_j(\sh(\sigma))$.
As for $\lambda$: since all the flips in~$\lambda$
are of the form $\xl\xs \to \xs \xl$, we always have
$s_j(\lambda') = s_j(\lambda)$
or
$s_j(\lambda') = s_j(\lambda)-1$.

In terms of $k$-wiring diagram, this means that $\xs$-paths are monotone
in the sense that, as we scan them from the top to the bottom,
they move weakly to the left at each step. Moreover, if there is $\xl$
before $\xs$, then the path that goes through this $\xs$ will
move strongly to the left at the next step.

We now prove the claim.
Assume for contradiction that we have
$\sh(\sigma) \preccurlyeq \lambda$ but
$\sh(T(\sigma)) \not \preccurlyeq \lambda'$.
Due to the properties just seen,
this can only happen if for some $j$,
we have $(\alpha:=) \ s_j (\sh(\sigma))
= s_j(\sh(T(\sigma)))
= s_j (\lambda)
= s_j (\lambda') + 1$
(see Figure~\ref{fig:posetproof}).
Since $s_j (\lambda')
= s_j (\lambda) -1$,
the $(\alpha-1)$st position in $\lambda'$
is occupied by $\xl$.
Since $s_j (\sh(T(\sigma)))
= s_j (\sh(\sigma)) $,
the $(\alpha-1)$st position in $\sh(\sigma)$
is occupied by $\xs$.
This means $s_{j-1}(\sigma)=\alpha-1$
and $s_{j-1}(\lambda)<\alpha-1$, which contradicts
$\sh(\sigma) \preccurlyeq \lambda$.
\begin{figure}[ht]
\centering
\includegraphics[scale=0.85]{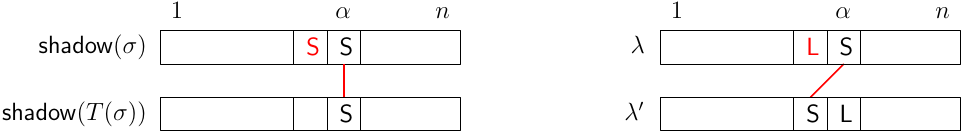}
\internallinenumbers
\caption{Illustration to the proof of Proposition~\ref{thm:lambda}
($\sh(\sigma) \preccurlyeq \lambda \ \Rightarrow \ \sh(T(\sigma)) \preccurlyeq \lambda$).}
\label{fig:posetproof}
\end{figure}
\end{proof}

\pagebreak

The next proposition is a rephrasing of one of the claims in Ungar's proof.

\begin{proposition}\label{thm:pi_rho}
Let $\pi$ be any permutation of size $n$, and let $1 \leq k \leq n-1$.
Let $\rho = \ominus(n-k, k)$ as defined in Section~\ref{sec:3def}.
Then, for each $m$ ($1 \leq m \leq n-1$), we have for $k$-shadows
$\sh(T^m(\pi)) \preccurlyeq \sh(T^m(\rho))$.
\end{proposition}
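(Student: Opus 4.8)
The plan is to prove the slightly stronger statement that $\sh(T^m(\pi)) \li \sh(T^m(\rho))$ holds for every $m$ with $0 \le m \le n-1$, and to establish it by induction on $m$, feeding Proposition~\ref{thm:lambda} at each step. This is the natural inductive scope: although the claim is only asserted for $m \ge 1$, including the case $m=0$ is what gets the induction off the ground.

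For the base case $m=0$, recall that $\sh(\rho) = \xl^{n-k}\xs^k$, which is the \emph{maximum} element of the poset $\x(k, n-k)$. Hence $\sh(\pi) \li \sh(\rho)$ holds automatically for every permutation $\pi$, which is exactly the claim at $m=0$.

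For the inductive step, suppose $1 \le m \le n-1$ and $\sh(T^{m-1}(\pi)) \li \sh(T^{m-1}(\rho))$. I would apply Proposition~\ref{thm:lambda} with $\sigma := T^{m-1}(\pi)$ and $\lambda := \sh(T^{m-1}(\rho))$. Two facts are needed. First, $\lambda$ must differ from the minimum $\xs^k \xl^{n-k}$, as Proposition~\ref{thm:lambda} requires: from the explicit formula~\eqref{eq:wd_rho}, the shadow of $\rho$ reaches $\xs^k \xl^{n-k}$ only at step $n-1$ (the path $\xs_k(\rho)$ settles at its final position $k$ precisely when $m = n-k+k-1 = n-1$), so for $m-1 \le n-2$ we indeed have $\lambda \ne \xs^k \xl^{n-k}$. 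Second, the word $\lambda'$ produced by Proposition~\ref{thm:lambda} --- obtained from $\lambda$ by replacing each consecutive $\xl\xs$ by $\xs\xl$ --- coincides exactly with $\sh(T^m(\rho))$: this is precisely the evolution rule for the shadow of $\rho$ recorded just after the definition of $\rho$ and encoded in~\eqref{eq:wd_rho}. With these two checks in place, Proposition~\ref{thm:lambda} yields $\sh(T(\sigma)) \li \lambda'$, that is $\sh(T^m(\pi)) \li \sh(T^m(\rho))$, completing the induction.

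The substance of the argument is entirely contained in Proposition~\ref{thm:lambda}, which I may assume; the present statement is then a clean transfer of that monotonicity from an abstract dominating word $\lambda$ to the concrete sequence of shadows of $\rho$. The only delicate points are the two bookkeeping verifications above: that the chain $\sh(\rho), \sh(T(\rho)), \ldots$ is not at the poset minimum before step $n-1$ (so Proposition~\ref{thm:lambda} stays applicable throughout the required range), and that this chain advances by exactly the $\xl\xs \to \xs\xl$ rule matching $\lambda \mapsto \lambda'$. Both follow immediately from~\eqref{eq:wd_rho}, so I anticipate no genuine obstacle once Proposition~\ref{thm:lambda} is in hand.
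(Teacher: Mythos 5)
Your proof is correct and follows essentially the same route as the paper's: a base case from the fact that $\xl^{n-k}\xs^k$ is the maximum of $\x(k,n-k)$, followed by induction via Proposition~\ref{thm:lambda} using the evolution rule $\sh(T^{m+1}(\rho))=(\sh(T^{m}(\rho)))'$. Your explicit check that the chain of shadows of $\rho$ does not reach the poset minimum before step $n-1$ is a welcome detail that the paper leaves implicit.
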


\begin{proof}
First, we have $\sh(\pi) \preccurlyeq \sh(\rho)$ because in $\sh(\rho)$
the $k$ letters $\xs$ occupy the last $k$ positions.

Then, the claim follows from Proposition~\ref{thm:lambda} by induction
since for $0 \leq m \leq n-2$ we have $\sh(T^{m+1}(\rho)) = (\sh(T^{m}(\rho)))'$.
\end{proof}

In terms of $k$-wiring diagrams, Proposition~\ref{thm:pi_rho} says that
the paths in the $k$-wiring diagram of $\rho$ \textbf{majorize}
the paths in the $k$-wiring diagram of $\pi$ in the following sense:
for each $j$ ($1 \leq j \leq k$),
the path $\xs_j (\pi)$
is weakly to the left of the path $\xs_j(\rho)$;
and similarly,
for each $j$ ($1 \leq j \leq n-k$),
the path $\xl_j (\pi)$
is weakly to the right of the path $\xl_j(\rho)$.
For illustration, see Figure~\ref{fig:wd_comb}
in which
$\xs$- and $\xl$-paths of $\pi$ (thin red and blue lines)
are shown together with
$\xs$- and $\xl$-paths of $\rho$ (thick pink and light-blue lines).

\begin{figure}[ht]
\centering\internallinenumbers
\includegraphics[scale=0.95]{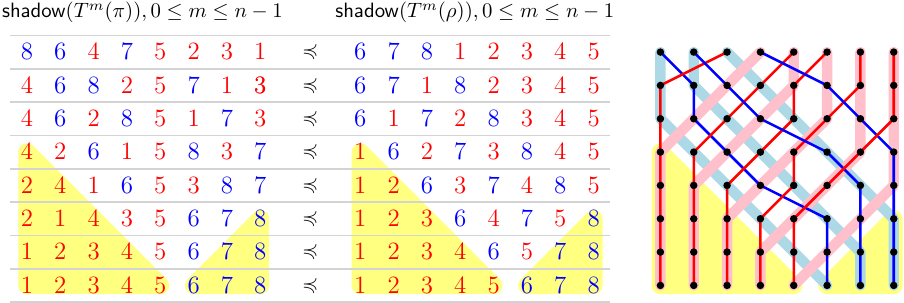}
\caption{Illustration to Proposition~\ref{thm:pi_rho}.
Left part:
For each $m\geq 0$, $\sh(T^m(\pi)) \preccurlyeq \sh(T^m(\rho))$.
Right part:
The wiring diagrams for $\pi$ and for $\rho$ shown superimposed.
For each $j$, the red path $\xs_j (\pi)$ is weakly to the left of the pink path $\xs_j(\rho)$;
for each $j$, the blue path $\xl_j (\pi)$ is weakly to the right of the light-blue path $\xl_j(\rho)$.
The triangular areas (indicated by a yellow colour)
only contain $\xs$ for the left triangle and $\xl$ for the right triangle.
}
\label{fig:wd_comb}
\end{figure}

We now can complete the proof of Theorem~\ref{thm:width} on the bandwidth of permutations in $\imtm$.

\begin{proof}[of Theorem~\ref{thm:width}]
We notice that, along the last (vertical) segment of any path of $\rho$,
the corresponding path of $\pi$ coincides with it
(these areas are marked by yellow triangles in Figure~\ref{fig:wd_comb}). More formally,
it follows from Proposition~\ref{thm:pi_rho} and from Equations~\eqref{eq:wd_rho}
(which describe the shape of the $\xs$- and $\xl$-paths of $\rho$)
that
\begin{align}
\text{for each } j \ (1\leq j \leq k)\colon \ m\geq n-k-1+j \Rightarrow s_j(\sh(T^m(\pi)))=j, \label{eq:tri01} \\
\text{for each } j \ (1\leq j \leq n-k)\colon \ m\geq k-1+j \Rightarrow \ell_j(\sh(T^m(\pi)))=n+1-j. \label{eq:tri02}
\end{align}

Now, we fix $m$ ($0\leq m \leq n-1$), and let
$\tau = T^m(\pi) = b_1 b_2 \ldots b_n$.
Since $s_j(\sh(\tau))=\alpha$ for some $j$ implies $b_\alpha \leq k$,
and since $\ell_j(\sh(\tau))=\beta$ for some $j$ implies $b_\beta \geq k+1$,
equations \eqref{eq:tri01} and \eqref{eq:tri02} translate to
\begin{align}
1 \leq i \leq k+m+1-n \ \ \Rightarrow \ \ b_i\leq k, \label{eq:tri1} \\
k+n-m \leq i \leq n \ \ \Rightarrow \ \ b_i\geq k+1. \label{eq:tri2}
\end{align}

Now we make the final step.
Let $1 \leq i \leq n$.
Equations~\eqref{eq:tri1} and~\eqref{eq:tri2}
hold for \textit{any}~$k$, $1 \leq k \leq n-1$,
and we choose two specific values.
First, we take $k=i+n-1-m$.
Now we have $i \leq k+m+1-n$, and therefore~\eqref{eq:tri1} implies $b_i \leq k = i+(n-1-m)$.
Second, we take $k=i-n+m$.
Now we have $i \geq k+n-m$, and therefore~\eqref{eq:tri2} implies $b_i \geq k+1 = i-(n-1-m)$.
Thus, we have $|b_i-i| \leq n-1-m$ for each $i$ ($1 \leq i \leq n$),
that is, $\di(\tau) \leq n-1-m$, as claimed.
\end{proof}

In terms of permutation diagrams, Equations~\eqref{eq:tri1} and~\eqref{eq:tri2}
(for fixed $m$) mean that for each $k$ some rectangular areas in the diagram of $\tau \in \imtm$ are forbidden.
Taking the union of these areas for $1 \leq k \leq n-1$, we obtain \textbf{forbidden corners},
which yield the bound on the bandwidth.
See Figure~\ref{fig:wd_zones} for illustration (the forbidden areas are indicated by grey colour).

\begin{figure}[ht]
\centering
\includegraphics[scale=.75]{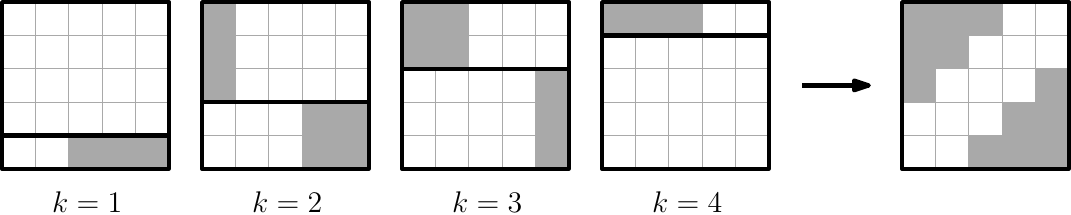}
\internallinenumbers
\caption{Illustration to proof of Theorem~\ref{thm:width} for $n=5$: forbidden areas for fixed $m$ ($m=3$).}
\label{fig:wd_zones}
\end{figure}

In Figure~\ref{fig:wd_zones_m} we show the forbidden corners for $n=5$ and $0 \leq m \leq 4$.

\begin{figure}[ht]
\centering\setlength{\abovecaptionskip}{0mm}\setlength{\belowcaptionskip}{0mm}
\includegraphics[scale=.75]{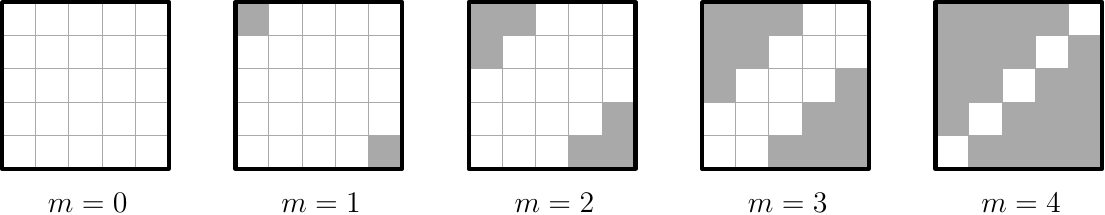}
\caption{Illustration to proof of Theorem~\ref{thm:width} for $n=5$:
forbidden areas for $0 \leq m \leq n-1$.}
\label{fig:wd_zones_m}
\end{figure}

Now, if we compare this to Figure~\ref{tab:ev1200}
(where grey areas are precisely those forbidden by Theorem~\ref{thm:width}),
we see many other areas without points, and
it is natural to ask whether there are some larger forbidden areas.
In fact, contrary to what Figure~\ref{tab:ev1200} could suggest, the forbidden areas
characterized in Theorem~\ref{thm:width} are maximal in the
sense that for each position
outside the grey corners we can find a permutation that contains a point at this position.
In fact, all the positions not forbidden by Theorem~\ref{thm:width} are ``covered'' by
skew-layered permutations with $\leq 2$ runs, as our next result shows.

\begin{proposition}\label{thm:2runs}
Let $n$ be fixed, and let $0 \leq m \leq n-1$.
For $1 \leq i \leq n$, let
$\max\{0, \, i-(n-1-m)\} \leq j \leq \min\{n, \, i+(n-1-m)\}$.
Then there exists $\pi$,
a skew-layered permutation with $\leq 2$ runs,
such that for $\tau=T^m(\pi)=b_1b_2 \ldots b_n$
we have $b_i=j$.
\end{proposition}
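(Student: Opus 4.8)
The plan is to realize each admissible point $(i,j)$ using only the skew-layered permutations $\rho_k=\ominus(n-k,k)$ for $1\le k\le n-1$, together with $\id_n=\ominus(n)$; recall that $\rho_k$ has exactly two runs while $\id_n$ has one, so all of these are skew-layered with $\le 2$ runs. The crucial advantage of these permutations is the remark made after Equations~\eqref{eq:wd_rho}: the $k$-wiring diagram of $\rho_k$ coincides with the diagram connecting equal values, so the position of \emph{each value} in $T^m(\rho_k)$ is given \emph{explicitly} by \eqref{eq:wd_rho} --- value $j\le k$ sits at $s_j(T^m(\rho_k))$, and value $j>k$ sits at $\ell_{n+1-j}(T^m(\rho_k))$. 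To prove the statement I would fix a target $(i,j)$ with $1\le i,j\le n$ and $|i-j|\le n-1-m$, and exhibit a permutation from this family placing value $j$ at position $i$ in its $m$th image. The diagonal case $i=j$ is immediate, since $T^m(\id_n)=\id_n$ puts value $j$ at position $j$ for every $m$.

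For the below-diagonal case $i>j$ (so $0<i-j\le n-1-m$), I would track value $j$ as a \emph{small} value, letting $k$ range over $\{j,j+1,\dots,n-1\}$. By \eqref{eq:wd_rho} the position of $j$ in $T^m(\rho_k)$ equals $n-k+j$, or $n-k+2j-m-1$, or $j$, according to the start, middle, and final regime; for $j$ and $m$ fixed this is a non-increasing function of $k$ that drops by exactly $1$ per unit increase of $k$ until it stabilises at $j$. The key computation is to evaluate the two endpoints: at $k=j$ the position equals $\min\{n,\,j+(n-1-m)\}$, and at $k=n-1$ it equals $j$ or $j+1$. Because the function moves in unit steps, value $j$ can then be placed at \emph{every} integer position in $\{j,\dots,\min\{n,\,j+(n-1-m)\}\}$ (invoking $\id_n$ for the endpoint $j$ itself in the cases where $\rho_{n-1}$ only reaches $j+1$), and the hypothesis $i-j\le n-1-m$ guarantees that the target $i$ lies in this interval.

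The above-diagonal case $i<j$ is handled symmetrically, now tracking value $j$ as a \emph{large} value with $k$ ranging over $\{1,\dots,j-1\}$ and reading the $\ell$-branch of \eqref{eq:wd_rho}: the position of $j$ then sweeps $\{\max\{1,\,j-(n-1-m)\},\dots,j\}$ in unit steps, so the bound $j-i\le n-1-m$ again places the target in range. I expect the main obstacle to be exactly this endpoint bookkeeping: one must check that the extreme positions reachable by the moving value match the bandwidth bound $n-1-m$ of Theorem~\ref{thm:width} \emph{exactly}, and that the three regimes of \eqref{eq:wd_rho} glue into a gap-free, unit-step sweep. This is where the sub-cases $m\le j-1$ versus $m\ge j$ (respectively $m\le n-j$ versus $m\ge n-j+1$) enter, and it is the one place where a careful reading of the piecewise formula, rather than a soft monotonicity remark, seems unavoidable.
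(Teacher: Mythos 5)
Your proposal is correct and follows essentially the same route as the paper: both realize each admissible point $(i,j)$ by a two-run skew-layered permutation $\ominus(\cdot,\cdot)$ (plus $\id$ for the diagonal) and read off the position of the tracked value from the explicit piecewise formulas~\eqref{eq:wd_rho}. The only cosmetic difference is that the paper solves the relevant branch of~\eqref{eq:wd_rho} directly for $k$ in terms of $i$, $j$, $m$, whereas you argue via a unit-step monotone sweep of the position as $k$ varies and check the two endpoints; your endpoint computations are consistent with the paper's.
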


\begin{proof}
Assume $j<i$.
Let $\pi = \ominus(k, n-k)$ where $k$ will be determined below.
From~\eqref{eq:wd_rho} and discussion thereafter, we know:
if $m \leq j-1$ then $j=b_i$ for $i=k+j$,
if $j-1 \leq m \leq k-1+j$ then $j=b_i$ for $i=k+j-(m-(j-1))$,
and if $k-1+j \leq m \leq n-1$ then $j=b_j$.
Solving the first two expressions for $k$ we obtain
that we need to set $k=i-j$ if $m \leq j-1$,
and $k = i-2j+m+1$ if $j \leq m \leq k-1+j$.

The case $j>i$ is similar, and for $j=i$ we can take $\pi=\id$ which will do for any $m$.
\end{proof}

\begin{figure}[h]
\centering\setlength{\abovecaptionskip}{0mm}\setlength{\belowcaptionskip}{0mm}
\includegraphics[scale=.75]{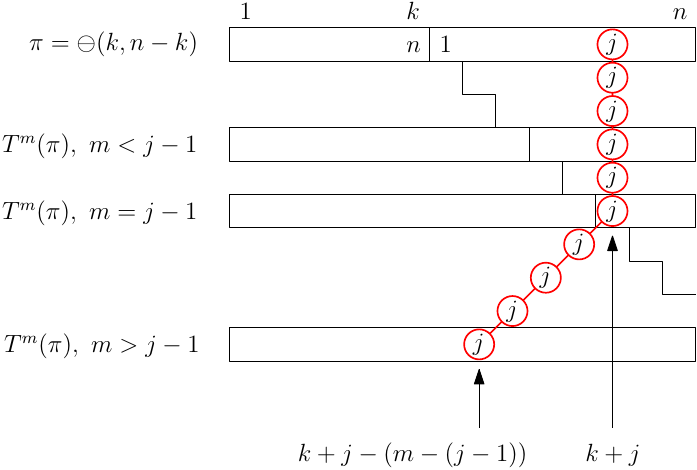}\internallinenumbers
\caption{Illustration to the proof of Proposition~\ref{thm:2runs}.}
\label{fig:2runs}
\end{figure}

Finally, we remark that, since Theorem~\ref{thm:width} establishes
that the bandwidth of allowed positions for permutations in $\imtm$ decreases with $m$,
one can be tempted to conjecture that for $\pi \neq \id$ we always have $\di(T(\pi)) < \di(\pi)$.
Such a claim would, of course, imply Theorem~\ref{thm:width}!
However, such a conjecture is wrong: the smallest counter-example is $\pi = 3412$,
and it is easy to see that any skew-layered permutation
with at least two blocks,
such that the first and last blocks are of size $\geq 2$, is a counter-example.

\pagebreak

\subsection{The penultimate step of flip-sort:
the theorem about \texorpdfstring{$\mathrm{Im}(T^{n-2})$}{Im(T(n-2))}.}

In view of Ungar's result, we have $\imt{n-1}=\{\id\}$.
The main result of this section is a structural characterization of $\imt{n-2}$.
According to Theorem~\ref{thm:width},
$\tau \in \mathrm{Im}(T^{n-2})$ implies $\di(\tau) \leq 1$.
In fact, $\di(\tau) \leq 1$ is a necessary, but not sufficient, condition for being in $\imt{n-2}$. 
(For example, for $n=5$, one can see in Figure~\ref{fig:tree} that
the permutation $21354$ with $\di=1$ is not in $\imt{3}$.)
However, permutations that satisfy $\di(\tau) \leq 1$ with will play an important role in our next results,
and so we introduce a notation for them, and list some of their (nearly obvious) properties.

\begin{definition}\label{def:thin}
A permutation $\tau$ is \textit{thin} if $\di(\tau) \leq 1$.
\end{definition}

\begin{observation}\label{thm:thin}\phantom{\qquad}
\begin{enumerate}[topsep=0pt]
\item A thin permutation is uniquely defined by the sequence of lengths of its runs.
The thin permutation with runs of lengths $r_1, r_2, \ldots, r_s$ (left to right)
will be denoted by $\th(r_1, r_2, \ldots, r_s)$.
For example, $\th(3,4,1)=124|3568|7$.
If $s\geq 2$, the runs $r_i$ with $1 < i < s$ will be referred to as \textup{inner runs}.
\item A sequence $(r_1, r_2, \ldots, r_s)$ of natural numbers
is realizable as the sequence of lengths of runs of a thin permutation
if and only if $r_i>1$ for $2 \leq i \leq s-1$.
\item Any inner run of a thin permutation $a_1 a_2 \ldots a_n \neq \id$ has the form
\begin{equation*}[a_{\ell}, a_{\ell+1}, a_{\ell+2}, \ldots, a_{m-2}, a_{m-1}, a_{m}]
=[\ell-1, \ell+1, \ell+2, \ldots, m-2, m-1, m+1]\end{equation*}
The first and, respectively, the last run, have the forms
\begin{equation*}[a_{1}, a_{2}, \ldots, a_{m-2}, a_{m-1}, a_{m}]
=[1, 2, \ldots, m-2, m-1, m+1],\end{equation*}
\begin{equation*}[a_{\ell}, a_{\ell+1}, a_{\ell+2}, \ldots, a_{n-1}, a_{n}]
=[\ell-1, \ell+1, \ell+2, \ldots, , n-1, n].\end{equation*}
\item A thin permutation is also uniquely defined by the sequence of lengths
of its falls, that can only have lengths $1$ or $2$.
Any $\{1, 2\}$-sequence is realizable as
the sequence of lengths of falls of a thin permutation.
It follows that thin permutations are enumerated by Fibonacci numbers.
\end{enumerate}
\end{observation}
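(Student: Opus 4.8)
The plan is to reduce all four parts to a single structural fact: a thin permutation is precisely a product of disjoint adjacent transpositions, i.e.\ a tiling of $\{1,\dots,n\}$ by fixed points (``monominoes'') and swapped neighbouring pairs (``dominoes''). First I would establish the local characterization of descents: if $\tau=a_1\ldots a_n$ is thin and $a_i>a_{i+1}$, then the inequalities $a_{i+1}\ge i$ and $a_i\le i+1$ (both coming from $\di(\tau)\le 1$) force $i\le a_{i+1}<a_i\le i+1$, hence $a_{i+1}=i$ and $a_i=i+1$. Thus every descent is an isolated adjacent transposition $(i,i+1)$; in particular two descents can never be consecutive, so every fall has length $\le 2$. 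A short induction from the left (inspecting $a_1\in\{1,2\}$ and peeling off either a fixed point or the forced swap $a_1a_2=21$) then shows that away from these transposed pairs every position is a fixed point, which yields the tiling description.

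Granting this, Part 3 is a direct computation: the descents sit at positions $p_1<\cdots<p_t$ (the left ends of the dominoes), between consecutive descents all entries are fixed points, and reading off the values on each maximal ascending block gives exactly the three claimed shapes for the first run, the inner runs, and the last run. For Part 1, the run lengths $(r_1,\dots,r_s)$ are the successive gaps $p_1,\,p_2-p_1,\,\dots,\,n-p_t$, so their partial sums recover the descent positions $p_j=r_1+\cdots+r_j$, which in turn recover the tiling and hence $\tau$; this gives uniqueness and legitimises the notation $\th(r_1,\dots,r_s)$.

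Part 2 is the translation of the disjointness of the dominoes. Since consecutive descents differ by at least $2$, an inner run length $r_j=p_j-p_{j-1}$ (for $2\le j\le s-1=t$) is always $\ge 2$, while the outer lengths $r_1=p_1$ and $r_s=n-p_t$ are only required to be $\ge 1$; conversely any sequence obeying these constraints produces a valid family of disjoint transpositions, so it is realizable. For Part 4 I would observe that the falls of $\tau$ coincide exactly with the tiles: a fixed point is a fall of length $1$ and a swapped pair a fall of length $2$, and one checks that the boundary between any two consecutive tiles is an ascent. Hence the fall-length sequence is an arbitrary composition of $n$ into parts $1$ and $2$, each such composition arising from a unique thin permutation; the number of these compositions satisfies the Fibonacci recurrence, which yields the enumeration.

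The steps are all routine once the tiling picture is in place; the only point requiring genuine care is the local descent characterization together with the accompanying leftward induction, since that is where the hypothesis $\di(\tau)\le 1$ is actually used---everything afterwards is bookkeeping on descent positions and the resulting compositions.
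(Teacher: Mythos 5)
Your proof is correct and complete. The paper itself states this result as an ``Observation'' with no proof attached (the properties are described there as ``nearly obvious''), so there is no argument of the authors' to compare against; your reduction of all four parts to the tiling of $\{1,\dots,n\}$ by fixed points and disjoint adjacent transpositions is a clean way to supply the missing justification. The one step that genuinely needs the hypothesis $\di(\tau)\le 1$ --- the local forcing $a_{i+1}=i$, $a_i=i+1$ at every descent, plus the leftward induction showing all remaining positions are fixed --- is exactly the step you single out and carry through correctly; the rest is, as you say, bookkeeping on descent positions and $\{1,2\}$-compositions, and it checks out (in particular the boundary between two consecutive tiles is always an ascent, so falls coincide with tiles and the Fibonacci count follows).
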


Now we state and prove the characterization of $\mathrm{Im}(T^{n-2})$.

\begin{theorem}\label{thm:n-2}
A permutation $\tau=b_1b_2\ldots b_n$ belongs to $\mathrm{Im}(T^{n-2})$
if and only if it is thin and has no inner runs of odd size.

Accordingly, $|\mathrm{Im}(T^{n-2})|=2^{n/2-1}+2^{n/2}-1$ for even $n$,
$2^{(n+1)/2}-1$ for odd $n$ (\href{https://oeis.org/A052955}{OEIS A052955})\footnote{OEIS stands for the On-Line Encyclopedia of Integer Sequences, accessible at \url{https://oeis.org/}.}.
\end{theorem}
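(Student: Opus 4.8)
The plan is to prove the two directions of the equivalence separately, after reducing everything to the combinatorics of thin permutations, and then to obtain the cardinality by a routine generating‑function computation. Throughout write $\tau=\th(r_1,\dots,r_s)$ for the thin permutation with runs of lengths $r_1,\dots,r_s$ (Observation~\ref{thm:thin}); recall that being thin means $\di(\tau)\le 1$, and that the inner runs are $r_2,\dots,r_{s-1}$. Necessity of thinness is immediate: applying Theorem~\ref{thm:width} with $m=n-2$ gives $\di(\tau)\le n-1-(n-2)=1$, so every $\tau\in\imt{n-2}$ is thin. Moreover $T(\tau)=T^{n-1}(\pi)=\id$ by Ungar's bound, which merely re‑confirms that $\tau$ is layered (automatic for thin permutations). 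Hence the entire content of the theorem is the parity condition on the inner runs.

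For the sufficiency direction I would exhibit, for every thin $\tau=\th(r_1,\dots,r_s)$ whose inner runs are all even, the explicit preimage obtained by reversing the list of run lengths, namely the skew‑layered permutation $\pi:=\ominus(r_s,r_{s-1},\dots,r_1)$. One checks on small cases that $T^{n-2}(\pi)=\tau$ (for instance $T^{3}(\ominus(3,2))=T^3(34512)=13245=\th(2,3)$ and $T^{3}(\ominus(2,2,1))=T^3(45231)=21435=\th(1,2,2)$); since the resulting image is $\neq\id$, this automatically forces $\cost(\pi)=n-1$, whence $\tau\in\imt{n-2}$. To establish $T^{n-2}(\pi)=\tau$ in general I would track the evolution of $\pi$ through its $k$‑shadows $\sh_k(T^m(\pi))$: within each block of a skew‑layered permutation the values are already sorted, so by the proof of Proposition~\ref{thm:lambda} each shadow evolves only through flips $\xl^a\xs^b\mapsto\xs^b\xl^a$ induced by the falls, and the crucial point is that evenness of every inner block makes these flips propagate at \emph{unit} speed (no two consecutive boundary crossings ever merge into one fall), so that the last flip before $\id$ deposits exactly one adjacent transposition at each block boundary and produces precisely $\th(r_1,\dots,r_s)$.

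For necessity of the parity condition I would argue by contradiction. Assume $\tau\in\imt{n-2}$ is thin with an inner run of odd length, bounded by the descent edges $e<e'$ with $e'-e$ odd, and let $\pi$ satisfy $T^{n-2}(\pi)=\tau$. At each of the two levels $k=e$ and $k=e'$ the shadow $\sh_k(\tau)$ equals $\xs^{k-1}\xl\xs\xl^{n-k-1}$, the word lying immediately above the minimum of $\x(k,n-k)$; so by Proposition~\ref{thm:pi_rho} and the explicit trajectory~\eqref{eq:wd_rho} of $\rho_k=\ominus(n-k,k)$ the $\xs$‑ and $\xl$‑paths of $\pi$ are pinned against those of $\rho_k$ along their entire final descent, simultaneously at level $e$ and at level $e'$. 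Writing out what this forces in the band of positions carrying the values $e+1,\dots,e'$ (exactly the inner run under consideration), one finds that the two forced monotone families of paths can be realized by a single permutation only when the band width $e'-e$ is even; for odd width they are off by one and cannot be pieced together, contradicting the existence of $\pi$. This interplay of the two shadows at the endpoints of an inner run is the technical heart of the argument and the step I expect to be the main obstacle: unlike in Theorem~\ref{thm:width}, the majorization $\sh_k(T^m(\pi))\li\sh_k(T^m(\rho_k))$ alone does not suffice — a single shadow need not equal the extremal trajectory of $\rho_k$ — so the parity has to be extracted from the \emph{joint} constraints at both ends of the run.

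Finally, the enumeration follows directly from the characterization. A thin permutation is determined by its composition of run lengths $(r_1,\dots,r_s)$, subject to $r_1,r_s\ge 1$ and (by Observation~\ref{thm:thin} together with the new condition) $r_i$ even, hence $\ge 2$, for each inner run. Encoding an outer run by $\frac{x}{1-x}$ and an inner run by $\frac{x^2}{1-x^2}$, and separating $s=1$ from $s\ge 2$, the generating function for $(|\imt{n-2}|)_{n\ge 1}$ is
\begin{equation*}
\frac{x}{1-x}+\frac{\bigl(\tfrac{x}{1-x}\bigr)^{2}}{1-\tfrac{x^{2}}{1-x^{2}}}
=\frac{x+x^{2}-x^{3}}{(1-x)(1-2x^{2})}
=-\tfrac12-\frac{1}{1-x}+\frac{2x+\tfrac32}{1-2x^{2}}.
\end{equation*}
Reading off the coefficients from this partial fraction decomposition (the summand $-\tfrac12$ affects only $n=0$) yields $|\imt{n-2}|=3\cdot 2^{n/2-1}-1=2^{n/2-1}+2^{n/2}-1$ for even $n$ and $2^{(n+1)/2}-1$ for odd $n$, which is \href{https://oeis.org/A052955}{A052955}, as claimed.
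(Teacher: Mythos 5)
Your skeleton matches the paper's: thinness from Theorem~\ref{thm:width}, the preimage $\ominus(r_s,\dots,r_1)$ for sufficiency, the two shadow levels at the endpoints $k<\ell$ of an odd inner run for necessity, and a correct (indeed more explicit than the paper's one-line remark) generating-function count $\frac{x}{1-x}+\frac{(x/(1-x))^2}{1-x^2/(1-x^2)}$. But both substantive directions are left at the level of an announced strategy, and in each case the missing step is precisely the hard one. For sufficiency you verify $T^{n-2}(\ominus(r_s,\dots,r_1))=\th(r_1,\dots,r_s)$ only on two examples and then assert a ``unit-speed propagation'' principle for the shadow flips; the paper actually proves the identity by splitting each (even) inner run of $\pi$ into two half-blocks and describing every path of the resulting wiring diagram explicitly as a five-segment polygonal line, from which the symmetry of the diagram, and hence the reversal of the run-length sequence, follows. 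Your heuristic is in the same spirit, but it would need to be turned into such an explicit description (or an induction on $m$) before it counts as a proof.

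The larger gap is in the necessity of the parity condition. You correctly observe that the single-level majorization $\sh_k(T^m(\pi))\li\sh_k(T^m(\rho_k))$ does not suffice and that the constraints at levels $k$ and $\ell$ must be played against each other; but the sentence ``for odd width they are off by one and cannot be pieced together'' is exactly the statement to be proved, and it is not a dimension count. The paper's argument is a genuine combinatorial induction: because $\ell-k$ is odd, the $\xs$-paths of $\rho_k$ and the $\xl$-paths of $\rho_\ell$ cross at \emph{grid points} $P_{i,j}$ (conflict points), through which the corresponding paths of $\pi$ cannot both pass since $S\cap L=\varnothing$; one then calls a pair $(i,j)$ regular if both $\xs_i(\pi)$ and $\xl_j(\pi)$ touch their majorating paths after $P_{i,j}$ and before reaching their final columns, shows that $(k,n-\ell)$ is regular (using $b_{k+1}=k$ and $b_\ell=\ell+1$ in $\tau=T^{n-2}(\pi)$), proves that regularity of $(i,j)$ forces regularity of $(i-1,j)$ or $(i,j-1)$, and derives a contradiction by descending to the topmost conflict point $(1,1)$. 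Nothing in your proposal supplies this descent, so as written the forward direction of the characterization is not established.
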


\begin{proof} \quad [\textit{First part of the proof:} $\tau \in \mathrm{Im}(T^{n-2}) \Rightarrow \tau$ is thin and with no inner runs of odd size.]
Let $\tau=b_1b_2\ldots b_n \in \mathrm{Im}(T^{n-2})$.
By Theorem~\ref{thm:width} we have $\di(\tau) \leq (n-1) - (n-2) = 1$:
thus, $\tau$ is thin.
The second condition will be proved by contradiction.
To this end we assume that $\tau$ is a thin permutation with at least one
inner run of odd size, and that we have $\tau = T^{n-2}(\pi)$
for some permutation~$\pi$.
Specifically, we assume that $\tau$
has a run $[b_{k+1}, b_{k+2}, \ldots, b_{\ell-1}, b_{\ell}]$
such that
$1 \leq k$, $\ell \leq n-1$,
$\ell-k$ is odd, and $\ell-k\geq 3$
(because, by Observation~\ref{thm:thin}.2,
a thin permutation has no inner runs of size~$1$).

For the proof, we shall exploit shadows and wiring diagrams for both $k$ and $\ell$.
First, due to Observation~\ref{thm:thin}.3,
we know the $k$- and the $\ell$-shadows of $\tau$:
\begin{equation}\label{eq:kl_shadow}
\sh_k(\tau)=\xs^{k-1}\xl\xs\xl^{n-k-1}, \ \ \ \sh_\ell(\tau)=\xs^{\ell-1}\xl\xs\xl^{n-\ell-1}.
\end{equation}

Now we introduce the \textit{$(k-\ell)$-wiring diagram of $\pi$}:
the drawing that consists of $\xs$-paths from the $k$-wiring diagram of $\pi$,
and the $\xl$-paths from the $\ell$-wiring diagram of $\pi$.
Thus, in the $(k-\ell)$-wiring diagram we have $k$ (red) $\xs$-paths:
for $1 \leq j \leq k$,
the $j$th $\xs$-path (denoted by~$\xs_j$) connects the $j$th from left occurrences of
the values from $S:=\{1, 2, \ldots, k \}$;
and we have $\ell':=n-\ell$ (blue) $\xl$-paths:
for $1 \leq j \leq \ell'$
the $j$th $\xl$-path (denoted by~$\xl_j$) connects the $j$th from right occurrences of
the values from $L:=\{\ell+1, \ell+2, \ldots, n \}$.
The remaining values from
$\{k+1, k+2, \ldots, \ell \}$
are not connected by paths in such a diagram.

Additionally, we consider the pink paths of $\rho_k=\ominus(n-k, k)$
and the light-blue paths of $\rho_\ell=\ominus(n-\ell, \ell)$.
By Proposition~\ref{thm:pi_rho}, these paths majorize the corresponding paths of $\pi$:
For $1 \leq j \leq k$, the path $\xs_j(\pi)$ is weakly to the left from the $\xs_j(\rho_k)$, and
for $1 \leq j \leq \ell'$, the path $\xl_j(\pi)$ is weakly to the right from the $\xl_j(\rho_\ell)$.

At this point we notice that, \textbf{since $\ell-k$ is odd},
all the pink paths $\xs_j(\rho_k)$
and
the light-blue paths $\xl_j(\rho_\ell)$
cross each other at grid points of the diagram,
see Figure~\ref{fig:odd}.
(Compare with Figure~\ref{fig:wd_rho},
where pink and light-blue paths cross each other between the rows of the diagram!)
Denote by $P_{i,j}$ ($1 \leq i \leq k$, $1 \leq j \leq \ell'$)
the crossing point of the path $\xs_i(\rho_k)$ and the path $\xl_j(\rho_\ell)$:
we refer to these points as \textit{conflict points}.

\begin{figure}[h]
\centering
\includegraphics[scale=0.85]{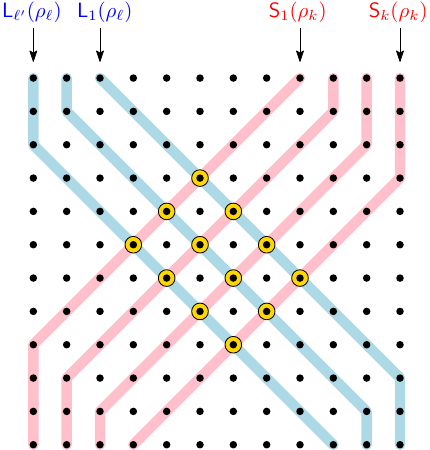}
\internallinenumbers
\caption{Illustration to the proof of Theorem~\ref{thm:n-2}:
pink paths of $\rho_k=\ominus(n-k, k)$
and blue-light paths of $\rho_\ell=\ominus(n-\ell, \ell)$.
The conflict points are drawn as yellow discs.}
\label{fig:odd}
\end{figure}

Since $S \cap L = \varnothing$, it is impossible that
both $\xs_i(\pi)$ and $\xl_j(\pi)$ pass through $P_{i,j}$.
This means that in the row of $P_{i,j}$,
at least one of the paths $\xs_i(\pi)$ and $\xl_j(\pi)$
is \textbf{strongly majorized} by the corresponding
pink or, respectively, light-blue path.

For $1 \leq i \leq k, \ 1 \leq j \leq \ell'$
we say that
the pair $(i, j)$ is \textit{regular}
if
some point of $\xs_i(\pi)$ lies on $\xs_i(\rho_k)$
after $P_{i,j}$ and
before $\xs_i(\pi)$ and $\xs_i(\rho_k)$ reach their final position $i$,
\textit{and}
some point of $\xl_j(\pi)$ lies on $\xl_j(\rho_\ell)$
after $P_{i,j}$ and
before $\xl_j(\pi)$ and $\xl_j(\rho_\ell)$ reach their final position $n+1-j$.
Next we prove two claims:
\begin{itemize}
\item \textbf{Claim 1:} \textit{The pair $(k, \ell')$ is regular.} \\
Indeed, we have $b_{k+1}=k$ and $b_{\ell}=\ell+1$;
this means that, in the row
which corresponds to $\tau=T^{n-2}(\pi)$,
$\xs_k(\pi)$ coincides with $\xs_k(\rho_k)$,
and
$\xl_{\ell'}(\pi)$ coincides with $\xl_{\ell'}(\rho_\ell)$.
In both cases, the paths are
after the conflict points and before their final columns.

\item \textbf{Claim 2:} \textit{If a pair $(i, j)$ is regular,
then at least one of the pairs $(i-1, j)$, $(i, j-1)$ is regular.} \\
At least one of the paths $\xs_i(\pi)$, $\xl_j(\pi)$ does not contain the point $P_{ij}$.
Assume without loss of generality that $\xs_i(\pi)$ does not contain the point $P_{ij}$.
That is, in the row of $P_{ij}$,
$\xs_i(\pi)$ is strictly to the left from the pink path $\xs_i(\rho_k)$.
Since $\xs_i(\pi)$ has later a point that lies on $\xs_i(\rho_k)$
(after $P_{i,j}$, but before both paths reach their final position $i$),
it makes a vertical step there, from a point $\alpha$ to a point $\beta$
(refer to Figure~\ref{fig:KLp}).
But this means that $\gamma$, the left neighbour of $\alpha$, is also from $S$.
Therefore $\gamma$ is a point of $\xs_{i-1}(\pi)$ on $\xs_{i-1}(\rho_k)$,
after $P_{i-1,j}$ but before both paths reach their final position $i-1$.
This means that the pair $(i-1, j)$ is regular.
(If we assume above that $\xl_j(\pi)$ does not contain the point $P_{ij}$,
we obtain that $(i, j-1)$ is regular.)
\end{itemize}

\begin{figure}[h]
\setlength{\abovecaptionskip}{0mm}
\setlength{\belowcaptionskip}{-1mm}
\centering
\includegraphics[scale=.98]{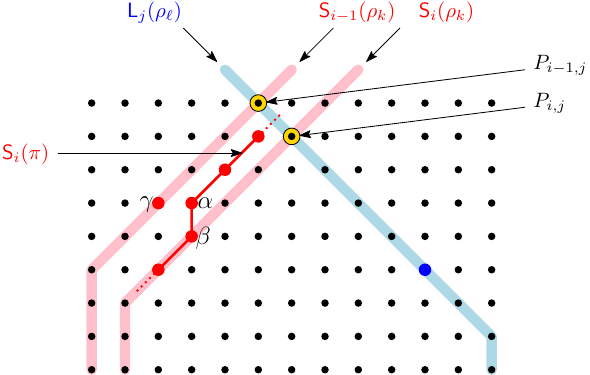}\internallinenumbers
\caption{Illustration to the proof of the claim: if $(i, j)$ is regular,
then $(i-1, j)$ is regular or $(i, j-1)$ is regular.}\label{fig:KLp}
\end{figure}

Starting with Claim~$1$ and applying Claim~$2$ repeatedly, we obtain that the pair $(1,1)$ is regular.
However, the proof of Claim~$2$ applies to $(1,1)$ as well.
Since $(1,1)$ is the highest conflict point, we get a contradiction.

\smallskip

It is also possible to reverse this proof and to explain it ``from the top to the bottom''.
Then it essentially says that at least one of the sets, $S$ and $L$,
arrives at its final position sooner than in $n-1$ iterations.
However in $\tau$ these sets are not at their final position
(as witnessed by~\eqref{eq:kl_shadow}), and this is a contradiction.

\medskip
\makebox[0.975\textwidth][s]{
[\textit{Second part of the proof:} 
$\tau$ is thin and with no inner runs of odd size $\Rightarrow \tau \in \mathrm{Im}(T^{n-2})$.]}\\
For each $\tau$ which is a thin permutation without odd inner runs, 
we will construct $\pi$ such that $T^{n-2}(\pi)=\tau$.
If $\tau=\id$ we can take $\pi = \id$, so we assume from now on that $\tau\neq\id$.
Let $\tau = \th(r_1, r_2, \ldots, r_{s-1}, r_s)$ so that $s>1$ and the numbers $r_2, \ldots, r_{s-1}$ are even.
We define $\pi$ to be the skew-layered permutation $\ominus(r_s, r_{s-1}, \ldots, r_2, r_1)$.

We claim that for this $\pi$ we have $T^{n-2}(\pi)=\tau$.
To see that, we analyse the wiring diagram of $\pi, T(\pi), T^2(\pi), \ldots$; an
example is illustrated in Figure~\ref{fig:skew2thin}.
Notice that this is the usual wiring diagram, in which paths connect fixed values.
For the sake of visualization, each inner run of $\pi$ is partitioned into two \textit{blocks}
of equal size, the left one and the right one.
For the values from the left blocks of inner runs,
and for all the values from the last run,
we colour the paths in green.
For the values from the right blocks of inner runs,
and for all the values from the first run,
we colour the paths in orange.
Then, the paths have very clear description.
Specifically, for an inner run of length $r$, the $j$th from left value ($1 \leq j \leq r/2$)
generates the green path that consists of $5$ segments (some of them can be empty):
(1) a vertical segment of length $(j-1)$,
(2) a slanted segment of slope $1$ until the path reaches the position $j$,
(3) a vertical segment of length $r-2j+1$,
(4) a slanted segment of slope $-1$ until the path reaches its final position,
(5) a vertical segment.
The green paths of the last run do not have (4) and (5);
orange paths are described similarly.
The whole diagram is symmetric, and therefore the sequence of lengths of runs
of $\tau$ is the reversal of the sequence of lengths of runs of $\pi$.

\begin{figure}[h]
\centering
\includegraphics[scale=.93]{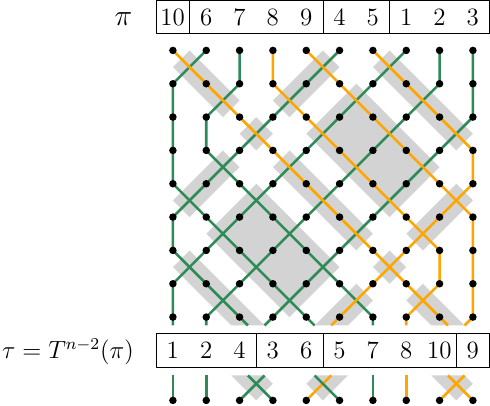}
\internallinenumbers
\caption{Illustration to the proof of $T^{n-2}(\ominus(r_s, \ldots, r_1))=\th(r_1, \ldots, r_s)$ (where $r_2, \ldots, r_{s-1}$ are even).}\label{fig:skew2thin}
\end{figure}

Our proof shows that in order to construct a permutation in $\imt{n-2}$,
one only has to choose positions (of the same parity) for the gaps between runs.
This yields the enumerative formula, as stated in the claim.
\end{proof}

\textbf{Remark.} In general, there are several permutations $\pi$ that satisfy $T^{n-2}(\pi)=\tau$
for given $\tau\in\imt{n-2}$. Characterizing all such $\pi$ is a challenging problem.

\pagebreak

\subsection{Skew-layered permutations: a family of worst cases}\label{sec:skew}

By Theorem~\ref{thm:n-2}, each skew-layered permutation
of size $n$ without odd inner runs has the 
(maximum possible for permutations of size $n$) cost $n-1$.
Characterizing all permutations with that cost is a challenging open problem. 
In this section, we present a necessary condition,
and then we conclude with a conjecture concerning the cost of any skew-layered permutation.

\begin{proposition}\label{thm:skew}
Let $\pi$ be a permutation of size $n$ with $\cost(\pi) = n-1$.
Let $\tau = b_1b_2 \ldots b_n = T^{n-2}(\pi) $.
\begin{enumerate}
\item There exists $k$, $1 \leq k \leq n-1$, such that
$\sh_k(\tau) \neq \xs^k\xl^{n-k}$.
\item For each such $k$,
we have $\sh_k(\pi) = \xl^{n-k}\xs^k$.
Consequently, $\pi$ can be written as
a skew sum $\sigma \ominus \sigma'$,
where $\sigma, \sigma'$ are permutations of sizes $n-k, k$.
\end{enumerate}
\end{proposition}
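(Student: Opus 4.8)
The plan is to handle the two parts separately, reducing everything to the combinatorics of $k$-shadows in the poset $\x(k,n-k)$.

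For part~1, I would first record that $\tau\neq\id$: since $\cost(\pi)=n-1$ we have $T^{n-2}(\pi)\neq\id$, i.e. $\tau\neq\id$. Then I argue by contradiction. If $\sh_k(\tau)=\xs^k\xl^{n-k}$ held for \emph{every} $k$ with $1\leq k\leq n-1$, then reading these conditions for $k=1,2,\ldots$ in turn forces the value $i$ into position $i$ for each $i$ (the condition for $k$ says that positions $1,\dots,k$ carry exactly the values $1,\dots,k$, and subtracting the condition for $k-1$ pins down position $k$). This would give $\tau=\id$, a contradiction; hence some $k$ violates it, proving part~1.

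For part~2, fix a $k$ with $\sh_k(\tau)\neq\xs^k\xl^{n-k}$. The key remark is that, because $\cost(\pi)=n-1$, the $k$-shadow of $\pi$ reaches the \emph{minimum} $\xs^k\xl^{n-k}$ of $\x(k,n-k)$ exactly at step $n-1$ and not before: indeed $\sh_k(T^{n-1}(\pi))=\sh_k(\id)=\xs^k\xl^{n-k}$, while $\sh_k(T^{n-2}(\pi))=\sh_k(\tau)\neq\xs^k\xl^{n-k}$. I want to show that this maximal laziness forces $\sh_k(\pi)$ to be the \emph{maximum} $\xl^{n-k}\xs^k$. Suppose not. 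Using that $\xl^{n-k}\xs^k$ is the maximum of $\x(k,n-k)$, any $w\neq\xl^{n-k}\xs^k$ satisfies $w\li\lambda_0$, where $\lambda_0:=\xl^{n-k-1}\xs\xl\xs^{k-1}$ is its unique lower cover; this is immediate from the position description of $\li$, since $w\neq\xl^{n-k}\xs^k$ forces the first $\xs$ of $w$ to sit weakly left of position $n-k$, while every later $\xs$ is automatically weakly left of its position in $\lambda_0$. In particular $\sh_k(\pi)\li\lambda_0$.

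Now I would iterate Proposition~\ref{thm:lambda} along the reference words $\lambda_m:=\sh_k(T^{m+1}(\rho))$ with $\rho=\ominus(n-k,k)$. By~\eqref{eq:wd_rho} these satisfy $\lambda_0=\xl^{n-k-1}\xs\xl\xs^{k-1}$ and $\lambda_{m+1}=\lambda_m'$, with $\lambda_m\neq\xs^k\xl^{n-k}$ for $m\leq n-3$ and $\lambda_{n-2}=\sh_k(T^{n-1}(\rho))=\xs^k\xl^{n-k}$. Starting from $\sh_k(\pi)\li\lambda_0$ and applying Proposition~\ref{thm:lambda} repeatedly yields $\sh_k(T^m(\pi))\li\lambda_m$ for all $m$; at $m=n-2$ this reads $\sh_k(\tau)\li\xs^k\xl^{n-k}$, hence $\sh_k(\tau)=\xs^k\xl^{n-k}$, contradicting the choice of $k$. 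Therefore $\sh_k(\pi)=\xl^{n-k}\xs^k$. For the ``consequently'': $\sh_k(\pi)=\xl^{n-k}\xs^k$ says precisely that the first $n-k$ entries of $\pi$ are the values $\{k+1,\dots,n\}$ and the last $k$ entries are $\{1,\dots,k\}$, which is exactly $\pi=\sigma\ominus\sigma'$ with $|\sigma|=n-k$ and $|\sigma'|=k$.

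The main obstacle — the only genuinely non-routine step — is the \emph{equality} in part~2. The plain majorization of Proposition~\ref{thm:pi_rho} only gives $\sh_k(T^m(\pi))\li\sh_k(T^m(\rho))$, which is too weak by itself, as it never forces $\sh_k(\pi)$ to attain the maximum. The decisive idea is to run the sharper majorization of Proposition~\ref{thm:lambda} started \emph{one level below the top}, at $\lambda_0=\sh_k(T(\rho))$, exploiting that the maximum of $\x(k,n-k)$ has a \emph{unique} lower cover; this is what turns ``$\sh_k(\pi)$ is not the maximum'' into ``the $k$-shadow sorts one step too early''. Everything else (part~1, the poset bookkeeping, and the final translation into a skew sum) is routine.
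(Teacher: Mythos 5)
Your proposal is correct and follows essentially the same route as the paper: part~1 via $\tau\neq\id$, and part~2 by observing that any $k$-shadow other than the maximum $\xl^{n-k}\xs^k$ lies below its unique lower cover $\sh_k(T(\rho))=\xl^{n-k-1}\xs\xl\xs^{k-1}$, then iterating Proposition~\ref{thm:lambda} along the shadows of $\rho$ to force $\sh_k(\tau)=\xs^k\xl^{n-k}$, a contradiction. The only cosmetic difference is in part~1, where the paper exhibits an explicit witness $k$ from a displaced entry $b_i\neq i$ rather than arguing by contrapositive; both are immediate.
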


\begin{proof}
\begin{enumerate}\setlength\itemsep{0mm}
\item Since $\cost(\pi) = n-1$, we have $\tau \neq \id$.
For each $i$ with $b_i>i$ we can take $k=b_i-1$,
and for each $i$ with $b_i<i$ we can take $k=b_i$.
\item In order to find a contradiction, assume that $\sh_k(\pi) \neq \xl^{n-k}\xs^k$.
Since the only element of the set of words $\x(n-k, k)$ covered by $\xl^{n-k}\xs^k$
is $\sh_k(T(\rho)) = \xl^{n-k-1}\xs\xl\xs^{k-1}$, we necessarily have
$\sh_k(\pi) \li \sh_k(T(\rho))$.
Then, by Proposition~\ref{thm:lambda} and by induction, we have
$\sh_k(T^{n-2}(\pi)) \li \sh_k(T^{n-1}(\rho))$,
that is, $\sh_k(\tau) = \xs^k\xl^{n-k}$,
which is a contradiction. \qedhere
\end{enumerate} 
\vspace{-3mm}
\end{proof}

As for skew-layered permutations, we expect that their cost is always high
(excluding the cases of $\id$ and $-\id$); however, this cost is not always $n-1$.
Our computer experiments then suggest the following classification.

\begin{conjecture}
Let $\pi$ be a skew-layered permutation of size $n$ such that $\pi$
is neither $\id$ nor $-\id$.
\begin{itemize}
\item For even $n$, one has $\cost(\pi) = n-1$.
\item For odd $n$, if the $(n+1)/2$-st position of $\pi$ is the central point of a run or of a fall of size $\geq 3$, 
then $\cost(\pi) = n-2$. Otherwise, $\cost(\pi) = n-1$.
\end{itemize}
\end{conjecture}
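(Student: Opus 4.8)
The plan is to reduce the computation of $\cost(\pi)$ to the behaviour of $k$-shadows, which is exactly the machinery developed in Section~\ref{sec:width}. The key observation is that $T^m(\pi)=\id$ if and only if for \emph{every} threshold $k$ the shadow $\sh_k(T^m(\pi))$ has reached its minimal word $\xs^k\xl^{n-k}$; moreover, by the monotonicity in Proposition~\ref{thm:lambda} the positions $s_j$ only decrease, so once a shadow is sorted it stays sorted. Setting $t_k(\pi):=\min\{m : \sh_k(T^m(\pi))=\xs^k\xl^{n-k}\}$, this gives the clean identity $\cost(\pi)=\max_{1\le k\le n-1} t_k(\pi)$, with each $t_k(\pi)\le n-1$ by the majorization $\sh_k(T^m(\pi))\li\sh_k(T^m(\rho_k))$ of Proposition~\ref{thm:pi_rho}. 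First I would combine this with Theorem~\ref{thm:n-2}: since $T^{n-2}(\pi)$ always lies in $\imt{n-2}$ and is therefore thin with no odd inner runs, the entire dichotomy collapses to the single question of whether $T^{n-2}(\pi)=\id$. If not, then $\cost(\pi)=n-1$; if so, then $\cost(\pi)\le n-2$.

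The core step is to determine $T^{n-2}(\pi)$ explicitly for $\pi=\ominus(m_1,\dots,m_s)$ by running the wiring-diagram analysis from the $[\Leftarrow]$ direction of Theorem~\ref{thm:n-2}, now for arbitrary block sizes. There each inner run was split into two equal halves whose paths travel in opposite diagonal directions and cross cleanly; for even blocks this produces, at time $n-2$, the nontrivial thin permutation $\th(m_s,\dots,m_1)$, hence $\cost(\pi)=n-1$. The new phenomenon is that an \emph{odd} block (or, in the fall-based case, an odd string of singleton blocks) leaves an unpaired central element. I would show that at time $n-2$ all such unpaired elements survive as a nontrivial thin residue---so that $T^{n-2}(\pi)\neq\id$ and $\cost(\pi)=n-1$---\emph{unless} the only such unpaired element sits exactly at position $(n+1)/2$ and its two opposite-moving halves collide symmetrically there. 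Using the reflection symmetry $T\circ \mathrm{rc}=\mathrm{rc}\circ T$ (with $\mathrm{rc}$ the reverse--complement, under which $\ominus(m_1,\dots,m_s)\mapsto\ominus(m_s,\dots,m_1)$), such a perfectly centred collision is possible only when $n$ is odd and the central structure is a run or a fall of odd size $\ge 3$ symmetric about $(n+1)/2$---precisely the stated condition. In that case the collision resolves at step $n-2$, giving $T^{n-2}(\pi)=\id$.

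It then remains to prove the matching lower bound $\cost(\pi)\ge n-2$ for $\pi\neq\id,-\id$, i.e.\ that $T^{n-3}(\pi)\neq\id$. Here I would again use $\cost(\pi)=\max_k t_k(\pi)$ and exhibit a single threshold---a central cut $k\approx n/2$---whose shadow is not yet sorted at time $n-3$. Concretely, Theorem~\ref{thm:width} already forces $\di(T^{n-3}(\pi))\le 2$, and I would complement this by tracking one central value through the wiring diagram of the core step and showing it is still displaced (by $1$ or $2$) at step $n-3$; producing such a value for every admissible block sequence yields $T^{n-3}(\pi)\neq\id$, hence $\cost(\pi)=n-2$ exactly in the collapsing case and $n-1$ in all others.

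The hardest part will be the exact bookkeeping of the central collision: turning the heuristic ``unpaired halves meet'' into a precise statement pinning down the parity dichotomy. Two features make this delicate---the fall-based sub-case, where the central structure is a string of singleton blocks rather than a long run, must be handled in tandem with the run-based sub-case; and the block sequence need not be palindromic (only the total sizes before and after the central structure must agree), so the symmetry argument governs only the neighbourhood of the centre and must be decoupled from the asymmetric outer blocks. I expect the cleanest route is an induction that peels off matching outer blocks---reducing any admissible $\pi$ to a short central configuration whose trajectory is computable by hand---while the shadow identity $\cost(\pi)=\max_k t_k(\pi)$ guarantees that the outer blocks cannot interfere with the central timing.
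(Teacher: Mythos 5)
You should first note that the paper does \emph{not} prove this statement: it is stated explicitly as a conjecture, supported only by computer experiments, so there is no proof of record to compare against. Your proposal is therefore being judged on its own merits as an attempted proof of an open claim. The framework you set up is sound and consistent with the paper's machinery: the identity $\cost(\pi)=\max_{1\le k\le n-1}t_k(\pi)$ is correct (a permutation is $\id$ iff every $k$-shadow equals $\xs^k\xl^{n-k}$, and the weak monotonicity of the $s_j$ from the proof of Proposition~\ref{thm:lambda} shows a sorted shadow stays sorted), and the reduction via Theorem~\ref{thm:n-2} of the whole dichotomy to the two questions ``is $T^{n-2}(\pi)=\id$?'' and ``is $T^{n-3}(\pi)\neq\id$?'' is exactly the right way to organize the problem.

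However, the proposal does not close either question, and the gaps are precisely where the difficulty lives. The explicit formula $T^{n-2}(\ominus(r_s,\dots,r_1))=\th(r_1,\dots,r_s)$ is established in the paper only under the hypothesis that all \emph{inner} blocks are even: the five-segment description of the green and orange paths relies on splitting each inner run into two equal halves, and it simply does not apply when an inner block is odd. Your treatment of the odd case --- an ``unpaired central element'' whose halves ``collide symmetrically'' exactly when $n$ is odd and the centre of the run sits at position $(n+1)/2$ --- is a heuristic, not an argument: you never compute the trajectory of the unpaired element, never show that an off-centre odd block forces $T^{n-2}(\pi)\neq\id$, and never verify that several odd blocks cannot conspire to cancel. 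The fall-based sub-case (a string of singleton blocks) is a genuinely different combinatorial configuration in which the run-based wiring description does not apply verbatim, and you acknowledge rather than resolve it. Finally, for the lower bound $T^{n-3}(\pi)\neq\id$, note that Theorem~\ref{thm:width} gives only an \emph{upper} bound on displacement, so it contributes nothing toward showing a value is still displaced at time $n-3$; the needed statement is an analogue of Proposition~\ref{thm:2runs} (which the paper proves only for skew-layered permutations with at most two blocks), and ``tracking one central value'' is again a plan rather than a proof. In short: correct scaffolding, but the two load-bearing computations are missing, so the conjecture remains open under your proposal.
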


As a direct corollary of this conjecture, among the $2^{n-1}$ skew-layered permutations, there are:
\begin{itemize}\setlength\itemsep{0mm}
\item for even $n$, $2^{n-1}-2$ skew-layered permutations with $\cost(\pi)=n-1$;
\item for odd $n$, $(2^{n-2} - 2) / 3$ skew-layered permutations with $\cost(\pi)=n-2$,
\item for odd $n$, $(5\cdot 2^{n-2} - 4) / 3$ skew-layered permutations with $\cost(\pi)=n-1$.
\end{itemize}
These are respectively the sequences \oeis{A000918}, \oeis{A020988}, and \oeis{A080675} from the OEIS.
\pagebreak

\section{Conclusion}
In this article, we have seen that a simple sorting procedure like flip-sort 
has many interesting combinatorial and algorithmic facets.
Our analysis of the best cases and the worst cases of this algorithm,
our results on the underlying poset, the links with lattice paths, 
are just some first steps towards a more refined understanding of this process. 

En passant, we listed a few conjectures, which, we hope, could tease the desire of the reader to work on this topic.
The optimal cost of the computation of pop-stacked permutations is still open;
other challenges are a finer analysis of the automata which recognize them (when the number of runs is fixed),
the existence of closed-form formulas, a method to solve puzzling functional equations like the one given in Theorem~\ref{thm:FE}. 

We also believe that several of the notions and structures introduced in this article 
can also be useful for the next natural step: the average-case analysis of flip-sort.
Even if this algorithm is not, in its naive version, as efficient as quicksort, 
it is very easy to implement, to parallelize, and 
it is still possible to optimize its implementation (this is a subject per se, 
like using double chained lists to perform the flips efficiently, 
or using some additional data structures to have a parsimonious reading of each iteration, etc). 
So there is not a ``flip-sort'' algorithm,
but a family of flip-sort algorithms, and to finely tune them in order to decide what could be the optimal variant 
should clearly be the subject of future research.

\bigskip

\acknowledgements
\label{sec:ack}
This research was supported by the Austrian Science Fund (FWF) 
in the framework of the project \href{https://www.math.aau.at/P28466/}{\textit{Analytic Combinatorics: Digits, Automata and Trees} (P~28466)}.
Cyril Banderier thanks the Department of Mathematics at the University of Klagenfurt for funding his visit in July 2019. 
Let us end by thanking the referees for their feedback. 
\bigskip

\bibliographystyle{cyrbiburl}
\bibliography{ABH}
\end{document}